\documentclass[12pt,a4paper,reqno,oneside]{amsart}

\textwidth=160mm
\textheight=235mm
\oddsidemargin=-0.4mm
\topmargin=-0.4mm

\sloppy

\usepackage{amsmath,amssymb,amsthm,graphicx,eucal,url}

\numberwithin{equation}{section}
\theoremstyle{plain}% default
\newtheorem{thm}{Theorem}[section]
%[section]
\newtheorem{lem}[thm]{Lemma}

\newtheorem{prop}[thm]{Proposition}
\newtheorem{cor}[thm]{Corollary}

\theoremstyle{remark}
\newtheorem{remark}[thm]{Remark}

\theoremstyle{plain}

\DeclareMathOperator{\vol}{vol}

\newcommand{\ess}{\mathrm{ess}}

\newcommand{\RR}{\mathbb{R}}
\newcommand{\CC}{\mathbb{C}}
\newcommand{\NN}{\mathbb{N}}

\newcommand{\bfB}{\mathbf{B}}

\newcommand{\cB}{\mathcal{B}}
\newcommand{\cC}{\mathcal{C}}
\newcommand{\cF}{\mathcal{F}}
\newcommand{\cH}{\mathcal{H}}
\newcommand{\cG}{\mathcal{G}}

\newcommand{\cL}{\mathcal{L}}
\newcommand{\cN}{\mathcal{N}}
\newcommand{\cO}{\mathcal{O}}
\newcommand{\cP}{\mathcal{P}}

\newcommand{\cR}{\mathcal{R}}
\newcommand{\cT}{\mathcal{T}}

\newcommand{\dd}{\mathrm{d}}
\newcommand{\rmi}{\mathrm{i}}

\newcommand{\dom}{\mathop{\mathcal{D}}}
\newcommand{\ran}{\mathop{\mathrm{ran}}}

\DeclareMathOperator{\dist}{dist}
\DeclareMathOperator{\ddiv}{div}
\newcommand{\spec}{\mathop{\mathrm{spec}}\nolimits}
\newcommand{\res}{\mathop{\mathrm{res}}\nolimits}

\usepackage[usenames,dvipsnames]{color}

\begin{document}

\title[]
{Dirac operators\\ with Lorentz scalar shell interactions}

\author[]{Markus Holzmann}
\address{Institut f\"{u}r Numerische Mathematik, Technische Universit\"{a}t Graz, Steyrergasse 30, 8010 Graz, Austria}
\email{holzmann@math.tugraz.at}
\urladdr{http://www.math.tugraz.at/~holzmann/}

\author[]{Thomas Ourmi\`eres-Bonafos}
\address{Laboratoire de Math\'ematiques d'Orsay, Univ.~Paris-Sud, CNRS, Universit\'e Paris-Saclay, 91405 Orsay, France}
\email{thomas.ourmieres-bonafos@math.u-psud.fr}
\urladdr{http://www.math.u-psud.fr/~ourmieres-bonafos/}

\author[]{Konstantin Pankrashkin} 
\address{Laboratoire de Math\'ematiques d'Orsay, Univ.~Paris-Sud, CNRS, Universit\'e Paris-Saclay, 91405 Orsay, France}
\email{konstantin.pankrashkin@math.u-psud.fr}
\urladdr{http://www.math.u-psud.fr/~pankrashkin/}

\date{\today}

\begin{abstract}
This paper deals with the massive three-dimensional Dirac operator coupled with a Lorentz scalar shell interaction supported on a compact smooth surface.
The rigorous definition of the operator involves suitable transmission conditions along the surface.
After showing the self-adjointness of the resulting operator we switch to the investigation of its spectral properties, in particular, to the existence and non-existence of eigenvalues.
In the case of an attractive coupling, we study the eigenvalue asymptotics as the mass becomes
large and show that the behavior of the individual eigenvalues and their total number
are governed by an effective Schr\"odinger operator on the boundary
with an external Yang-Mills potential
and a curvature-induced potential.
\end{abstract}

\maketitle

\tableofcontents

\section{Introduction}

\subsection{Motivations and main results}

The Dirac operator was introduced to give a quantum mechanical framework that takes relativistic properties of particles of spin $\frac{1}{2}$ into account. This operator can be seen as a relativistic counterpart of the Schr\"odinger operator and, as for this latter, the behavior of physical systems can be deduced from a thorough spectral analysis \cite{T92}.

In the present paper we focus on a class of Dirac operators with potentials supported on zero measure sets (the so-called $\delta$-potentials). Such interactions are often used in mathematical physics as idealizations for regular potentials located in a neighborhood of this zero set.
While such operators are well understood in the one-dimensional case, see e.g. \cite{AGHH,CMR,GS,PR}
as well as for the closely related radial mutidimensional case \cite{DES89}, the systematic study in higher dimension appeared
to be much more involved and attracted a lot of attention recently.
It seems that the first results on Dirac operators with interactions supported on general smooth surfaces (shells)
were obtained in \cite{AMV14,AMV15,AMV15bis}, where the self-adjointness and the discrete spectrum were discussed.
The analysis was based mostly on the usage of potential operators involving the fundamental solution of the unperturbed Dirac equation.
In \cite{BEHL17,BH17,OV16}, the study was pushed further in order to understand the Sobolev regularity of functions in the domain,
the $\delta$-shell potential being then encoded by a transmission condition at the shell. 
Furthermore, as for Schr\"odinger operators with $\delta$-potentials \cite{BEHL17_1}, the shell interactions
in the Dirac setting can be understood as suitable limits of regular potentials localized near the surface, as it was shown recently
in \cite{MP16,MP17}. One of the main motivations for the present paper is the recent work \cite{ALTR16}, where the closely related MIT bag model dealing with Dirac operators in bounded domains and special boundary conditions were studied. In fact, it is shown in \cite{ALTR16} that for large negative masses the asymptotics
of the MIT bag eigenvalues is determined by an effective operator acting on the boundary, and it is one of our objectives to
study the related problem for scalar shell interactions.

We are going to study the specific case of the three-dimensional Dirac operator coupled with a Lorentz scalar shell interaction of strength $\tau\in\RR$
supported on a smooth compact surface $\Sigma$. The operator acts in $L^2(\RR^3,\mathbb{C}^4)$ and writes formally as
\begin{equation}\label{eqn:op_form}
	A_{m,\tau}:= -\rmi\big(\alpha_1\partial_1+\alpha_2\partial_2+\alpha_3\partial_3\big) +m\beta + \tau \beta\delta_{\Sigma},
\end{equation}
where $\alpha_1,\alpha_2,\alpha_3,\beta$ are the standard $\mathbb{C}^{4\times4}$ Dirac matrices written down explicitly in \eqref{def_Dirac_matrices},
$m\in\mathbb{R}$ is the mass of the particle and $\delta_\Sigma$ is the Dirac distribution on $\Sigma$.
The expression \eqref{eqn:op_form} is formal due to the presence of the singular term $\delta_\Sigma$,
the rigorous definition of $A_{m,\tau}$ is given below in \eqref{equation_dom_B_tau} using
suitable transmission conditions at $\Sigma$.
We remark that the special value $\tau=0$ corresponds to the free Dirac operator, whose properties are well known (see Section~\ref{section_free_Operator}).
Furthermore, the  values $\tau=\pm 2$ play a special role as they
correspond to ``hard walls'' at $\Sigma$, i.e. $A_{m,\pm2}$ is decoupled and represent the direct sum  of two
operators acting inside and outside of $\Sigma$; this corresponds to the so-called MIT bag model already considered in \cite{ALTR16},
see Remark~\ref{rem-mit} below. 

In what follows we exclude the above special values of $\tau$. Our main results can be roughly summed up as follows.
\begin{itemize}
\item[(A)] The operator $A_{m,\tau}$ defined as in \eqref{equation_dom_B_tau} below is self-adjoint,
its spectrum is symmetric with respect to $0$, and its essential spectrum is $(-\infty,-|m|]\cup[|m|,+\infty)$.
\item[(B)] The operator $A_{m,\tau}$ is unitarily equivalent to $A_{m,\frac{4}{\tau}}$ and to $A_{-m,-\tau}$.
\end{itemize}
In view of the preceding symmetry,  without loss of generality for the subsequent points we assume that $m\ge 0$.
\begin{itemize}
\item[(C)] If $\tau \geq 0$, the discrete spectrum of $A_{m,\tau}$ is empty.
\item[(D)] For any $m>0$ there exists $\tau_m>0$ such that the discrete spectrum of $A_{m,\tau}$ 
is empty for $|\tau|<\tau_m$ and for $|\tau|>\dfrac{4}{\tau_m}$.
\end{itemize}
Finally, being motivated by the analysis of \cite{ALTR16} we provide an asymptotic study of the discrete spectrum for the case when
\begin{equation}
    \label{eq-as000}
\tau<0 \text{ with }\tau\ne -2\text{ is fixed }, \quad m\to +\infty
\end{equation}
and obtain the following results:
\begin{itemize}
\item[(E)] The total number of discrete eigenvalues of $A_{m,\tau}$ counted with multiplicities
obeys a Weyl-type law and behaves as
$$
	\frac{16}{\pi} \frac{\tau^2}{(\tau^2 + 4)^2}|\Sigma|m^2+\cO(m\log m),
$$ 
with $|\Sigma|$ being the surface area of $\Sigma$.
\item[(F)] Denote the eigenvalues of $A_{m,\tau}$ by $\pm \mu_j(m)$ with $\mu_j(m)\ge 0$ enumerated in the non-decreasing order,
then for each fixed $j\in\NN$ there holds
\begin{equation}
    \label{eq-eig00}
	\mu_j(m) = \frac{|\tau^2 -4|}{\tau^2+4}m + \frac{\tau^2+4}{|\tau^2 - 4|}\frac{E_j(\Upsilon_\tau)}{2m} + \mathcal{O}\Big(\frac{\log m}{m^2}\Big),
\end{equation}
where $E_j(\Upsilon_\tau)$ is the $j$-th eigenvalue of the $m$-independent Schr\"odinger operator $\Upsilon_\tau$
with an external Yang-Mills potential in $L^2(\Sigma,\CC^2)$,
\[
\Upsilon_\tau = \Big(\dd + \rmi \dfrac{4}{\tau^2+4}\,\omega\Big)^*\Big(\dd + \rmi
\dfrac{4}{\tau^2+4}\,\omega\Big) - \Big(\dfrac{\tau^2-4}{\tau^2+4}\Big)^2 M^2 + \dfrac{\tau^4+16}{(\tau^2+4)^2} K,
\]
where $K$ and $M$ are respectively the Gauss and mean curvature and the $1$-form $\omega$ is given by the local expression
$\omega:=\sigma\cdot (\nu\times \partial_1 \nu)\dd s_1+\sigma\cdot (\nu\times \partial_2 \nu)\dd s_2$
with $\nu$ being the outer unit normal on $\Sigma$. (The precise definition of $\Upsilon_\tau$ is given in Subsection~\ref{sec-eff}.)
\end{itemize}

We remark that by setting formally $\tau=\pm 2$ in \eqref{eq-eig00} one recovers the eigenvalue asymptotics for the MIT bag model as obtained in \cite[Thm.~1.13]{ALTR16}
with the effective operator written in an alternative way.

Let us describe the structure of the paper. In the following Section \ref{ssec-not}
we introduce first a couple of conventions used throughout the text. Section~\ref{sec-qual} is devoted to the definition of the operator
and to the proof of the assertions (A) and (B), see Theorem~\ref{thm-basic}. The proofs are mostly based on the use of singular integral operators
previously studied in \cite{OV16} and some resolvent machineries already used in a similar (but different) context in \cite{BEHL17,BH17}.
In Section~\ref{sec-var} we deal with a more detailed study of the discrete spectrum. The key idea of the analysis
is to obtain the sesquilinear form for the square of $A_{m,\tau}$. The squared operator clearly acts as the (shifted) Laplacian
away from $\Sigma$, and the main difficulty is to understand how the transmission condition translates to $A_{m,\tau}^2$, which is settled
in Proposition~\ref{theorem_form_B_tau_square}. The approach is reminiscent of \cite[p.~379]{HMR02} and \cite{ALTR16} for other types of Dirac operators.
It turns out that the quadratic form for $A_{m,\tau}^2$ is given by the same expression
as the one for the so-called $\delta'$-potential, see e.g. \cite[Prop. 3.15]{BLL12},
but defined on a smaller domain. Hence, our construction delivers a new type of generalized surface interactions \cite{ER}.
Nevertheless, an additional geometry-induced constraint along $\Sigma$ leads to a much more involved analysis
and a completely different behavior when compared to the $\delta'$-interaction studied, e.g., in \cite{EJ14}.
In particular, Propositions~\ref{prop3.4} and~\ref{thm-disc1} cover the above points (C) and (D).
Section~\ref{sec:dis-spec} is then devoted to the study of the asymptotic regime \eqref{eq-as000},
and the points (E) and (F) follow from Corollaries~\ref{cor32} and~\ref{cor33}, which are both consequences
of a central estimate given in Theorem~\ref{thm-ev1}. In fact, the asymptotic analysis
does not use the above operator $\Upsilon_\tau$ but another unitary equivalent operator introduced in Section~\ref{ssec42}
which is easier to deal with and which implies an equivalent reformulation given in Proposition~\ref{prop-main1}.
The upper and lower bounds for the eigenvalues are then obtained separately in Subsections~\ref{sec-upp1}
and~\ref{sec-low1} respectively, by comparing the operator $A_{m,\tau}$ first with operators in thin neighborhoods
of $\Sigma$ and then, using a change of variable, with operators with separated variables in $\Sigma\times I$
with $I$ being a one-dimensional interval, whose one-dimensional part is analyzed directly similar to, e.g.,~\cite{EY,PP}.
Contrary to the approach of \cite{ALTR16} our study does not use  semi-classical type estimates,
which allows a self-contained proof.

\subsection{Notations}\label{ssec-not}

For a Hilbert space $\cH$, one denotes by $\langle\cdot,\cdot\rangle_{\cH}$ the scalar product on $\cH$ and by $\|\cdot\|_{\cH}$ the associated norm.
As there is no risk of confusion and for the sake of readability, we simply set $\|\cdot\|_{\mathbb{C}^4} = |\cdot|$ and $\|\cdot\|_{\mathbb{R}^3\otimes\mathbb{C}^4} = |\cdot|$.

By $\bfB(\cH)$ we denote the Banach space of bounded linear operators in $\cH$.
If $T$ is a self-adjoint operator in $\cH$, then we denote
by $\dom(T)$ its domain, by $\ker(T)$ and $\ran(T)$ its kernel and range respectively,
and $E_n(T)$ will stand for the $n$-th eigenvalue of $T$ when enumerated in the non-decreasing
order and counted according to multiplicities. The spectrum of $T$ is denoted by $\spec(T)$, the essential spectrum
by $\spec_\ess(T)$ and the resolvent set by $\res(T)$.
If the operator $T$ in $\cH$ is generated by a closed lower semibounded
sesquilinear form $t$ defined on the domain $\dom(t)$, then the following variational characterization
of the eigenvalues holds (min-max principle): for $n\in\NN$ set
\[
\varepsilon_n(T):=\inf_{\substack{V\subset \dom(t)\\ \dim V=n}}\sup_{\substack{u\in V\\u\ne 0}} \dfrac{t(u,u)}{\|u\|^2_{\cH}},
\]
then $E_n(T)=\varepsilon_n(T)$ if $\varepsilon_n(T)<\inf\spec_\ess(T)$, otherwise
one has $\varepsilon_m(T)=\inf\spec_\ess(T)$ for all $m\ge n$.
We sometimes write $E_n(t):=E_n(T)$ and $\varepsilon_n(t):=\varepsilon_n(T)$. Furthermore,
for $E\in\RR$ we denote by $\cN(T,E)$ the number of eigenvalues of $T$ in $(-\infty,E)$
and set $\cN(t,E):=\cN(T,E)$.

For two closed and semibounded from below sesquilinear forms $t_1$ and $t_2$ their direct sum
$t_1\oplus t_2$ is the sesquilinear form defined on $\dom(t_1\oplus t_2):=\dom(t_1) \times \dom(t_2)$
by
\[
(t_1\oplus t_2)\big((u_1,u_2),(u_1,u_2)\big):=t_1(u_1,u_1)+t_2(u_2,u_2),
\quad (u_1, u_2) \in \dom(t_1) \times \dom(t_2).
\]
If $T_1$ and $T_2$ are the operators associated with $t_1$ and $t_2$,
then the operator associated with $t_1\oplus t_2$ is $T_1\oplus T_2$, and
$\cN(t,E)=\cN(t_1,E)+\cN(t_2,E)$.
The form inequality $t_1\ge t_2$ means that $\dom(t_1)\subseteq \dom(t_2)$
and $t_1(u)\ge t_2(u)$ for all $u\in \dom(t_1)$. By the min-max principle
the form inequality implies the respective inequality for the Rayleigh quotients,
$\varepsilon_n(t_1)\ge \varepsilon_n(t_2)$ for any $n\in\NN$, and the reverse inequality for the eigenvalue counting functions,
$\cN(t_1,E)\le \cN(t_2,E)$ for all $E\in\RR$.

Let $\alpha_1$, $\alpha_2$, $\alpha_3$, $\beta$ and $\gamma_5$ be the $4\times 4$ Dirac matrices 
\begin{equation} \label{def_Dirac_matrices}
  \alpha_j := \begin{pmatrix} 0 & \sigma_j \\ \sigma_j & 0 \end{pmatrix}, 
  \quad \beta := \begin{pmatrix} I_2 & 0 \\ 0 & -I_2 \end{pmatrix}, \quad
	\gamma_5: = \begin{pmatrix} 0 & I_2 \\ I_2 & 0 \end{pmatrix},
\end{equation}
where $I_k$ denotes the $k \times k$ identity matrix and $\sigma_j$ are the $2\times2$ Pauli spin matrices,
\begin{equation*}
  \sigma_1 := \begin{pmatrix} 0 & 1 \\ 1 & 0 \end{pmatrix}, \qquad
  \sigma_2 := \begin{pmatrix} 0 & -\rmi \\ \rmi & 0 \end{pmatrix}, \qquad
  \sigma_3 := \begin{pmatrix} 1 & 0 \\ 0 & -1 \end{pmatrix}.
\end{equation*}
The Dirac matrices fulfill the anti-commutation relations
\begin{gather}\label{eq_commutation}
	\alpha_j\alpha_k + \alpha_k\alpha_j = 2\delta_{jk}I_4,\qquad
	j,k\in\{0,1,2,3\}, \quad \alpha_0 := \beta,\\
	 \label{commutation_gamma_5}
 \gamma_5 \alpha_j = \alpha_j \gamma_5, ~j \in \{ 1, 2, 3\}, \quad  \gamma_5 \beta = -\beta \gamma_5.
\end{gather}
For vectors $x = (x_1, x_2, x_3)\in\RR^3$ we employ the notation
\[
\alpha \cdot x :=  \alpha_1 x_1 +  \alpha_2 x_2 + \alpha_3 x_3,
\quad
\sigma \cdot x :=  \sigma_1 x_1 +  \sigma_2 x_2 + \sigma_3 x_3.
\]

\section{Qualitative spectral properties}\label{sec-qual}

\subsection{Definition of  the operator}

Let $\Omega_+\subset \RR^3$ be a bounded $C^4$ smooth domain. We set
\[
\Sigma:=\partial\Omega_+, \quad \Omega_-:=\RR^3\setminus \overline{\Omega_-},
\]
and denote by $\nu$ the unit normal vector field on $\Sigma$ pointing outwards of~$\Omega_+$.
For $s \in \Sigma$ and $\tau\in\RR$ we set 
\begin{equation} \label{def_M}
  \cB(s) := - \rmi \beta \alpha \cdot \nu(s), \quad \cP_\tau^\pm(s) := \frac{\tau}{2} \pm \cB(s).
\end{equation}
Note that for any  $s \in \Sigma$ the matrix $\cB(s)$ is self-adjoint and unitary by~\eqref{eq_commutation}.

For $m\in\RR$ and $\tau\in\RR$,  we denote by $A_{m,\tau}$
the operator in $L^2(\RR^3,\CC^4)\simeq L^2(\Omega_+,\CC^4)\oplus L^2(\Omega_-,\CC^4)$ acting as
\begin{equation}
    \label{equation_dom_B_tau}
\begin{aligned}
A_{m,\tau} u &= \Big((-\rmi \alpha \cdot \nabla + m \beta) u_+ , (-\rmi \alpha \cdot \nabla + m \beta) u_-\Big),\\
\dom(A_{m,\tau}) &= \big\{
			u = (u_+,u_-): u_\pm\in H^1(\Omega_\pm, \mathbb{C}^4), \ 
          \cP_\tau^- u_+ +\cP_\tau^+ u_-= 0 \text{ on } \Sigma\big\}.	
\end{aligned}
\end{equation}
For $\tau \in \mathbb{R} \setminus \{ -2,2\}$ we set
\begin{equation}
\label{def_R_tau}
  \cR_\tau^\pm := -(\cP_\tau^\mp )^{-1} \cP_\tau^\pm= \dfrac{4+\tau^2}{4-\tau^2} I_4 \pm \dfrac{4\tau}{4-\tau^2}\,\cB.
\end{equation}
Then one has the commutation relations
\begin{equation}
   \label{eqcommr1}
\cR_\tau^\pm \cB=\cB \cR_\tau^\pm, \quad
 \cR_\tau^\pm\gamma_5= \gamma_5  \cR_\tau^\mp.
\end{equation}
For $\tau \notin\{ -2, 0, 2 \}$, the transmission condition for $u\in \dom(A_{m,\tau})$ can equivalently be rewritten as
	\begin{equation}
	   \label{eqtrans2}
	u_+=\cR_\tau^+ u_- \quad \text{ or } \quad 
	u_-=\cR_\tau^- u_+ 
	\quad \text{ or } \quad
	u_+ + u_- = \dfrac{2}{\tau} \cB (u_+ - u_-).
	\end{equation}

\begin{remark}\label{rem-mit}
For $|\tau|=2$ the transmission condition in \eqref{equation_dom_B_tau} decomposes as
\begin{align*}
%\label{equation_MIT_+}
  u_+ &= \cB u_+, &\qquad u_- &= - \cB u_- && \text{ for }\tau = 2,\\
%\label{equation_MIT_-}
  u_+ &= -\cB u_+, &\qquad u_- &= \cB u_- && \text{ for } \tau = -2,
\end{align*}
i.e. $A_{m, \pm 2}$ is the orthogonal sum of Dirac operators in $\Omega_\pm$ 
with MIT bag boundary conditions as studied, e.g., in \cite{ALTR16, OV16}.
Using the language of \cite{ALTR16}, for $\tau=2$ and $m>0$ one recovers the MIT bag operator
with the positive mass $m$ in $\Omega_+$ and the one with a negative mass $(-m)$ in $\Omega_-$ (and vice versa for $\tau=-2$).

As mentioned in the introduction it was shown in \cite{MP16,MP17} that, under some technical assumptions,
the operators $A_{m,\tau}$ can be approximated by Dirac operators with regular potentials.
As $A_{m,\tau}$ approximates $A_{m,\pm 2}$ for $\tau$ tending to $\pm2$, this could provide a new
interpretation and regularization of MIT bag operators with negative masses,
namely as the restriction of the limit of Dirac operators with suitable squeezed potentials and positive mass.
The missing point in this program is the fact that the technical restrictions of \cite{MP16}
do not allow to study the values of $\tau$ close to $\pm 2$.
\end{remark}

\begin{remark} The transmission condition in \eqref{equation_dom_B_tau} corresponds to the operator acting as formally written in \eqref{eqn:op_form}, cf.~\cite[Section~5]{AMV15}.
Indeed, for $u =(u_+,u_-)\in H^1(\Omega_+,\mathbb{C}^4)\times H^1(\Omega_-,\mathbb{C}^4)$ let us define the distribution $\delta_\Sigma u$ by its action
$$
\langle \delta_\Sigma u,\varphi\rangle = \frac12\iint_{\Sigma}(u_+ + u_-)\varphi \,\dd\Sigma,\quad\varphi\in C_0^\infty(\mathbb{R}^3,\mathbb{C}^4).
$$
with $\dd\Sigma$ being the surface measure. When computing $A_{m,\tau} u$ in the distributional sense using the above definition of $\delta_\Sigma u$
and the expression given in \eqref{eqn:op_form}, one sees that the transmission condition in \eqref{equation_dom_B_tau} ensures that $A_{m,\tau} u$
belongs to $L^2(\mathbb{R}^3,\mathbb{C}^4)$.
\end{remark}

Let us list some basic properties of the operator $A_{m, \tau}$:

\begin{thm}\label{thm-basic}
The operator $A_{m,\tau}$ defined in~\eqref{equation_dom_B_tau} is self-adjoint, and the following assertions hold true:
\begin{enumerate}
\item[(a)] the essential spectrum of $A_{m,\tau}$ is $\big(-\infty,-|m|\big]\mathop{\cup}\big[|m|,+\infty)$,
\item[(b)] the spectrum of $A_{m,\tau}$  is symmetric with respect to $0$,
\item[(c)] each eigenvalue of $A_{m,\tau}$ has an even multiplicity,
\item[(d)] for $\tau\neq0$, the operator $A_{m,\tau}$ is unitarily equivalent to $A_{m,\frac{4}{\tau}}$,
\item[(e)] the operator $A_{-m,-\tau}$ is unitarily equivalent to $A_{m,\tau}$.
\end{enumerate}
\end{thm}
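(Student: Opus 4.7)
For (a), I would follow the Birman--Schwinger strategy of \cite{AMV14, AMV15bis, OV16}. Let $A_m := A_{m,0}$ be the free Dirac operator on $H^1(\RR^3,\CC^4)$, which is self-adjoint with spectrum $(-\infty,-|m|]\cup[|m|,+\infty)$. For $z\in\res(A_m)$ let $G_z$ be its Green's function and introduce the single-layer potential $\Phi_z\colon L^2(\Sigma,\CC^4)\to L^2(\RR^3,\CC^4)$ with kernel $G_z(x-s)$, together with its boundary counterpart $\cC_z$, a bounded singular integral operator on $L^2(\Sigma,\CC^4)$ analyzed in \cite{OV16}. The jump formula $t_\pm\Phi_z = \mp\tfrac{\rmi}{2}\alpha\cdot\nu + \cC_z$ translates the transmission condition in \eqref{equation_dom_B_tau} into a surface equation involving $\cC_z$, and a Krein-type candidate resolvent of the form
\[
R_\tau(z) = (A_m-z)^{-1} - \Phi_z\bigl(\tfrac{1}{\tau}I + \beta\cC_z\bigr)^{-1}\beta\Phi_{\bar z}^*
\]
(the $\beta$-factors reflecting the Lorentz scalar nature of the coupling) is then checked to be everywhere defined and bounded, to satisfy $R_\tau(z)^* = R_\tau(\bar z)$, and to map into the set \eqref{equation_dom_B_tau}; these properties force $A_{m,\tau}$ to be self-adjoint with $R_\tau(z) = (A_{m,\tau}-z)^{-1}$. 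The essential-spectrum part of (a) then follows from Weyl's theorem, since $R_\tau(z) - (A_m-z)^{-1}$ factorizes through $L^2(\Sigma,\CC^4)$ via $\Phi_z$ and $\Phi_{\bar z}^*$, which are compact thanks to the compactness of $\Sigma$ and the smoothing of single-layer potentials.

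For (b)--(e) I would exhibit explicit matrix transformations on $L^2(\RR^3,\CC^4) = L^2(\Omega_+,\CC^4)\oplus L^2(\Omega_-,\CC^4)$. For (e), the constant matrix $\gamma_5$ is an involution with $\gamma_5(-\rmi\alpha\cdot\nabla + m\beta)\gamma_5 = -\rmi\alpha\cdot\nabla - m\beta$ by \eqref{commutation_gamma_5}, and the identity $\gamma_5\cB = -\cB\gamma_5$ propagates to $\gamma_5\cP_\tau^\pm = \cP_\tau^\mp\gamma_5 = -\cP_{-\tau}^\pm\gamma_5$, so conjugation by $\gamma_5$ converts the transmission condition of $A_{m,\tau}$ into that of $A_{-m,-\tau}$. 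For (d), a direct computation from \eqref{def_R_tau} yields $\cR_{4/\tau}^\pm = -\cR_\tau^\pm$, whence the sign-flip $U\colon(u_+,u_-)\mapsto (u_+,-u_-)$, which trivially commutes with the bulk differential expression, sends $u_+ = \cR_\tau^+ u_-$ to $v_+ = \cR_{4/\tau}^+ v_-$. For (b), the product $\gamma_5\beta$ anticommutes with every $\alpha_j$ and with $\beta$, so it conjugates the bulk expression to its negative, while $(\gamma_5\beta)\cB = \cB(\gamma_5\beta)$ preserves the transmission condition, giving $(\gamma_5\beta)A_{m,\tau}(\gamma_5\beta)^{-1} = -A_{m,\tau}$ and hence the spectral symmetry. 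Finally for (c), the antilinear map $Tu := \alpha_1\alpha_3\bar u$ satisfies $T^2 = -I$ and commutes with $A_{m,\tau}$ (using $(\alpha_1\alpha_3)\bar\alpha_j = -\alpha_j(\alpha_1\alpha_3)$, $[\alpha_1\alpha_3,\beta]=0$, and $T\cB = \cB T$), so the standard Kramers argument forces every eigenspace to be even-dimensional.

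The main technical obstacle sits in (a): one must ensure that $\tfrac{1}{\tau}I + \beta\cC_z$ is boundedly invertible on $L^2(\Sigma,\CC^4)$ for some (equivalently, every) non-real $z$, and it is precisely the values $\tau = \pm 2$ that fail this, corresponding to the decoupled MIT bag regime of Remark~\ref{rem-mit}. Once the mapping and spectral properties of $\cC_z$ from \cite{OV16} are quoted, the remaining verifications in (a) are delicate but routine, and (b)--(e) reduce to the matrix identities collected above.
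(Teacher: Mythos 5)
Your overall strategy for self-adjointness and (a) is the paper's: a Krein-type resolvent formula built from $\Phi_\lambda$, $\cC_\lambda$, the jump relations, and the invertibility of the perturbed boundary operator, followed by compactness of the resolvent difference and Weyl's theorem. For (d) and (e) you use the same transformations as the paper ($V\colon(u_+,u_-)\mapsto(u_+,-u_-)$ and $\gamma_5$). For (c) your $T u=\alpha_1\alpha_3\bar u$ is in fact literally the paper's time-reversal operator $\cT u=-\rmi\gamma_5\alpha_2\bar u$, since $\alpha_1\alpha_3=-\rmi\gamma_5\alpha_2$ as $4\times4$ matrices. For (b) you replace the paper's antilinear charge conjugation $\cC u=\rmi\beta\alpha_2\bar u$ by the linear unitary $\gamma_5\beta$, which also anticommutes with the Dirac expression and commutes with $\cB$; this is a valid alternative and in fact gives the symmetry of the spectrum directly from unitary equivalence $(\gamma_5\beta)A_{m,\tau}(\gamma_5\beta)^{-1}=-A_{m,\tau}$, avoiding the paper's Weyl-sequence argument.

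However, there is one genuine error in your last paragraph. You assert that the invertibility of $\tfrac1\tau I+\beta\cC_z$ (equivalently, of $I_4+\tau\beta\cC_z$) fails precisely at $\tau=\pm2$. This is false. The paper's Lemma~\ref{lemma_inverse_M} shows that $I_4+\tau\beta\cC_\lambda$ is boundedly invertible on $H^{\frac12}(\Sigma,\CC^4)$ for \emph{every} $\tau\in\RR$ and every non-real $\lambda$; indeed, Theorem~\ref{thm-basic} is stated and valid for all $\tau\in\RR$, $\tau=\pm2$ included. What is special about $\tau=\pm2$ is only that the transmission condition decouples $\Omega_+$ from $\Omega_-$ (Remark~\ref{rem-mit}), so $A_{m,\pm2}$ is a direct sum of two self-adjoint MIT bag operators; it is the matrices $\cP_\tau^\mp$, and hence the reflection coefficients $\cR_\tau^\pm$ of \eqref{def_R_tau}, that become singular at $\tau=\pm2$, not the Birman--Schwinger operator. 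The injectivity argument in the paper (no non-real eigenvalues of the symmetric operator) and the Fredholm argument both go through unchanged at $\tau=\pm2$. You should delete or correct that remark. A minor related imprecision: $\Phi_z$ and $\Phi_{\bar z}^*$ are not themselves compact; the compactness of the resolvent difference comes from the fact that $\Phi_{\bar\lambda}^*$ maps $L^2(\RR^3,\CC^4)$ boundedly into $H^{\frac12}(\Sigma,\CC^4)$, which embeds compactly into $L^2(\Sigma,\CC^4)$.
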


The results will be deduced from~\cite{BEHL17, BH17} by applying the abstract machinery
developed there for suitable boundary conditions. To keep the paper self-contained
we give a complete  proof in the rest of this section. We first introduce some
related integral operators in Section~\ref{section_free_Operator},
and with their help we prove the self-adjointness of $A_{m,\tau}$
in Proposition~\ref{theorem_basic_properties}.
The points (a)--(e) are justified in Section~\ref{sec-ae}.

\subsection{Auxiliary integral operators} \label{section_free_Operator}

First, we define the free Dirac operator and discuss some of its properties which will be needed 
for our further considerations. Recall the definition of the Dirac matrices $\alpha_j$ and $\beta$ 
from~\eqref{def_Dirac_matrices}. Then, the free Dirac operator $A_{m, 0}$ is given by
\begin{equation} \label{def_free_Dirac}
  A_{m, 0} u := -\rmi \sum_{j=1}^3 \alpha_j \partial_j u + m \beta u = -\rmi (\alpha \cdot \nabla) u + m \beta u, 
  \quad \dom (A_{m, 0}) = H^1(\mathbb{R}^3, \mathbb{C}^4).
\end{equation}
With the help of the Fourier transform one easily sees that $A_{m, 0}$ is self-adjoint and that
\begin{equation} \label{spectrum_A_{m, 0}}
  \spec(A_{m, 0}) = \spec_\ess(A_{m, 0}) = (-\infty, -|m|] \cup [|m|, \infty).
\end{equation}
For $\lambda \in \res(A_{m, 0}) = \mathbb{C} \setminus \big( (-\infty, -|m|] \cup [|m|, \infty) \big)$
the resolvent of $A_{m, 0}$ is given by
\begin{gather*}
  (A_{m, 0} - \lambda)^{-1} u(x) = \iiint_{\mathbb{R}^3} G_\lambda(x-y) u(y) \dd y,\\
  G_\lambda(x) = \left( \lambda I_4 + m \beta 
     + \left( 1 - \rmi \sqrt{\lambda^2 - m^2} |x| \right) \frac{\rmi(\alpha \cdot x )}{|x|^2} \right)
    \frac{e^{\rmi \sqrt{\lambda^2 - m^2} |x|}}{4 \pi |x|};
\end{gather*}
cf. \cite[Section~1.E]{T92} or \cite[Lemma~2.1]{AMV14}. 
In this formula we use the convention $\Im \sqrt{\lambda^2 - m^2} > 0$.
The resolvent of $A_{m, 0}$ and the particular form 
of its integral kernel will be important later for the 
basic spectral analysis of the Dirac operator with a Lorentz scalar $\delta$-shell interaction.

Now we are going to discuss some integral operators which are related to the Green's function~$G_\lambda$.
For~$\lambda \in \res(A_{m, 0})$
we define $\Phi_\lambda: L^2(\Sigma, \mathbb{C}^4) \rightarrow L^2(\mathbb{R}^3, \mathbb{C}^4)$ acting as
\begin{equation} \label{def_gamma_lambda}
  \Phi_\lambda \varphi(x) := \iint_\Sigma G_\lambda(x-y) \varphi(y) \dd\Sigma(y), 
  \quad \varphi \in L^2(\Sigma, \mathbb{C}^4),~x \in \mathbb{R}^3,
\end{equation}
and $\mathcal{C}_\lambda: H^{\frac{1}{2}}(\Sigma, \mathbb{C}^4) \rightarrow H^{\frac{1}{2}}(\Sigma, \mathbb{C}^4)$,
\begin{equation} \label{def_M_lambda}
  \mathcal{C}_\lambda \varphi(x) := \lim_{\varepsilon \searrow 0} \iint_{\Sigma \setminus B(x, \varepsilon)} 
  G_\lambda(x-y) \varphi(y) \dd\Sigma(y), \quad
  \varphi \in H^{\frac{1}{2}}(\Sigma, \mathbb{C}^4),~x \in \Sigma,
\end{equation}
where $\dd\Sigma$ is the surface measure on $\Sigma$ and $B(x,\varepsilon)$ is the ball of radius $\varepsilon$ centered at $x$.
Both operators $\Phi_\lambda$ and $\mathcal{C}_\lambda$ are well-defined and bounded, 
see~\cite[Proposition~3.4]{BEHL17} and~\cite[Proposition~4.2~(ii)]{BH17} or \cite[Sections~2.1 and~2.2]{OV16}, 
and $\Phi_\lambda$ is injective by~\cite[Proposition~3.4 and Definition~2.3]{BEHL17}. We also note the useful property
\begin{equation} \label{equation_gamma_smooth}
  \Phi_\lambda \varphi \in H^1(\Omega_+, \mathbb{C}^4) \oplus H^1(\Omega_-, \mathbb{C}^4)
  \quad \text{for} \quad \varphi \in H^{\frac{1}{2}}(\Sigma, \mathbb{C}^4);
\end{equation}
cf.~\cite[Proposition~4.2~(i)]{BH17}.
Moreover, if $\lambda \in \res(A_{m, 0})$, then a function $u_\lambda \in H^1(\Omega_+, \mathbb {C}^4) \oplus H^1(\Omega_-, \mathbb{C}^4)$
satisfies 
\begin{equation*}
  (-\rmi \alpha \cdot \nabla + m \beta - \lambda) u_\lambda = 0 \text{ in } \Omega_\pm
\end{equation*}
iff there exists $\varphi \in H^{\frac{1}{2}}(\Sigma, \mathbb{C}^4)$ such that 
\begin{equation} \label{equation_kernel}
  u_\lambda = \Phi_\lambda \varphi;
\end{equation}
cf.~\cite[Proposition~4.2]{BH17}. % (see also~\cite[Definition~2.2 and equation~(4.1)]{BH17}).
The adjoint $\Phi_\lambda^*: L^2(\mathbb{R}^3, \mathbb{C}^4) \rightarrow L^2(\Sigma, \mathbb{C}^4)$ of $\Phi_\lambda$
acts as 
\begin{equation} \label{equation_gamma_lambda_star}
  \Phi_\lambda^* u = \big( (A_{m, 0} - \overline{\lambda})^{-1} u \big)\big|_\Sigma
\end{equation}
and it has the more explicit representation
\begin{equation*}
  \Phi_\lambda^* u(x) = \iiint_{\mathbb{R}^3} G_{\overline{\lambda}}(x-y) u(y) \dd y, \quad
  u \in L^2(\mathbb{R}^3, \mathbb{C}^4),~x \in \Sigma.
\end{equation*}

Let $\varphi \in H^{\frac{1}{2}}(\Sigma, \mathbb{C}^4)$ and $\lambda \in \res(A_{m, 0})$. Then, the trace on $\Sigma$ of 
\begin{equation*}
  \Phi_\lambda \varphi = \big( (\Phi_\lambda \varphi)_+, (\Phi_\lambda \varphi)_- \big) 
  \in H^1(\Omega_+, \mathbb{C}^4) \oplus H^1(\Omega_-, \mathbb{C}^4)
\end{equation*}
is $\big((\Phi_\lambda \varphi)_\pm\big) \big|_\Sigma = \mathcal{C}_\lambda \varphi \mp \dfrac{\rmi}{2} (\alpha \cdot \nu) \varphi$,
see~\cite[Lemma~2.2]{AMV15} for $\lambda \in (-|m|, |m|)$; the case $\lambda \in \mathbb{C} \setminus \mathbb{R}$
can be shown in the same way. In particular, we have
\begin{gather} \label{jump1}
  \frac{1}{2} \left( (\Phi_\lambda \varphi)_+ + (\Phi_\lambda \varphi)_- \right)
  = \mathcal{C}_\lambda \varphi \text{ on } \Sigma,\\
 \label{jump2}
  \rmi \alpha \cdot \nu \left( (\Phi_\lambda \varphi)_+ - (\Phi_\lambda \varphi)_- \right) = \varphi
  \text{ on } \Sigma.
\end{gather}
The operator $\mathcal{C}_\lambda^2 - \frac{1}{4}I_4$ can be extended to a bounded operator
\begin{equation} \label{C_lambda_compact}
  \widetilde{\mathcal{C}}_\lambda^2 - \frac{1}{4} I_4: H^{-\frac{1}{2}}(\Sigma, \mathbb{C}^4) \rightarrow H^{\frac{1}{2}}(\Sigma, \mathbb{C}^4),
\end{equation}
see~\cite[Proposition~4.4~(iii)]{BH17} and also~\cite[Proposition~2.8]{OV16}.
In particular, the operator $\big(\mathcal{C}_\lambda^2 - \frac{1}{4}I_4 \big)$ is compact 
in $H^{\frac{1}{2}}(\Sigma, \mathbb{C}^4)$.

We end this section with a variant of the Birman-Schwinger principle for the operator $A_{m, \tau}$.
It is a special variant of the general result stated in~\cite[Theorem~2.4]{BEHL17} 
or~\cite[Proposition~3.1]{AMV15}; to keep the presentation self-contained,
we add a short and simple proof here.

\begin{lem} \label{lemma_Birman_Schwinger}
  Let $A_{m, \tau}$ be defined as in~\eqref{equation_dom_B_tau}
  and let $\tau \in \mathbb{R}$. Then $\lambda \in \res(A_{m, 0})$
  is an eigenvalue of $A_{m, \tau}$ if and only if $-1$ is an eigenvalue of $\tau \beta \mathcal{C}_\lambda$.
\end{lem}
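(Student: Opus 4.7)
The plan is to use the representation of solutions of the free Dirac equation on $\Omega_\pm$ via $\Phi_\lambda$ (see~\eqref{equation_kernel}) to convert the eigenvalue problem for $A_{m,\tau}$ into an equation for the density $\varphi$ on $\Sigma$, and then show that the transmission condition translates exactly into $\tau\beta\mathcal{C}_\lambda\varphi=-\varphi$.

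For the forward implication, let $u=(u_+,u_-)\in\dom(A_{m,\tau})$ satisfy $A_{m,\tau} u = \lambda u$ with $u\ne 0$. Since $\lambda\in\res(A_{m,0})$ and $u_\pm\in H^1(\Omega_\pm,\CC^4)$ solves $(-\rmi\alpha\cdot\nabla+m\beta-\lambda)u_\pm=0$ in $\Omega_\pm$, there exists by~\eqref{equation_kernel} a (unique) $\varphi\in H^{1/2}(\Sigma,\CC^4)$ with $u=\Phi_\lambda\varphi$. Adding and subtracting the trace formulas \eqref{jump1}--\eqref{jump2} gives
\[
u_++u_-=2\cC_\lambda\varphi, \qquad u_+-u_-=-\rmi(\alpha\cdot\nu)\varphi \quad\text{on }\Sigma.
\]
The transmission condition $\cP_\tau^- u_++\cP_\tau^+ u_-=0$ rewrites, using $\cP_\tau^\pm=\frac{\tau}{2}\pm\cB$, as
\[
\frac{\tau}{2}(u_++u_-)-\cB(u_+-u_-)=0.
\]
Substituting the two identities and using $\cB=-\rmi\beta\alpha\cdot\nu$ together with $(\alpha\cdot\nu)^2=I_4$ (from~\eqref{eq_commutation}) yields
\[
\tau\cC_\lambda\varphi+\beta\varphi=0,
\]
and left-multiplication by $\beta$ (which is involutive) gives $\tau\beta\cC_\lambda\varphi=-\varphi$. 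Moreover $\varphi\ne 0$, since $\Phi_\lambda$ is injective and $u\ne 0$.

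For the converse, suppose $\varphi\in H^{1/2}(\Sigma,\CC^4)\setminus\{0\}$ with $\tau\beta\cC_\lambda\varphi=-\varphi$, and set $u:=\Phi_\lambda\varphi$. Then $u\in H^1(\Omega_+,\CC^4)\oplus H^1(\Omega_-,\CC^4)$ by~\eqref{equation_gamma_smooth}, and by the mapping property of $\Phi_\lambda$ one has $(-\rmi\alpha\cdot\nabla+m\beta-\lambda)u_\pm=0$ in $\Omega_\pm$; in particular $(-\rmi\alpha\cdot\nabla+m\beta)u=\lambda u$ in $L^2(\RR^3,\CC^4)$. Reading the computation of the previous paragraph in reverse, the identity $\tau\cC_\lambda\varphi+\beta\varphi=0$ is equivalent to the transmission condition $\cP_\tau^- u_++\cP_\tau^+ u_-=0$, so that $u\in\dom(A_{m,\tau})$ and $A_{m,\tau}u=\lambda u$. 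Injectivity of $\Phi_\lambda$ again guarantees $u\ne 0$.

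There is no real obstacle: the entire argument is the algebraic translation between the two half-sided jumps of $\Phi_\lambda\varphi$ and the linear combination appearing in the transmission condition, made possible by the anti-commutation relations. The only point to keep in mind is that the eigenvalue equation $\tau\beta\cC_\lambda\varphi=-\varphi$ is to be understood in $H^{1/2}(\Sigma,\CC^4)$, which is automatic on both sides of the equivalence since traces of $H^1$-functions on $\Sigma$ live in $H^{1/2}(\Sigma,\CC^4)$ and $\cC_\lambda$ is bounded on the same space.
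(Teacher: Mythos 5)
Your proof is correct and follows essentially the same route as the paper's: represent the eigenfunction as $u=\Phi_\lambda\varphi$ via \eqref{equation_kernel}, then use the jump relations \eqref{jump1}--\eqref{jump2} together with the anticommutation identities to show that the transmission condition $\cP_\tau^- u_+ + \cP_\tau^+ u_- = 0$ is equivalent to $(I_4+\tau\beta\cC_\lambda)\varphi=0$. The paper carries out the same algebra slightly differently (writing the transmission condition directly as $\beta(I_4+\tau\beta\cC_\lambda)\varphi$ rather than first isolating $\tau\cC_\lambda\varphi+\beta\varphi=0$), but this is only a cosmetic difference.
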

\begin{proof}
  Assume that $\lambda \in \res(A_{m, 0})$ is an eigenvalue of $A_{m, \tau}$ with eigenfunction $u_\lambda$.
  Then, by~\eqref{equation_kernel}
  there exists a $0 \neq \varphi \in H^{\frac{1}{2}}(\Sigma, \mathbb{C}^4)$ such that 
  $u_\lambda = \Phi_\lambda \varphi$. Since $u_\lambda \in \dom(A_{m, \tau})$ it holds
  $\mathcal{P}_\tau^- u_{\lambda, +} + \mathcal{P}_\tau^+ u_{\lambda, -} = 0$.
  Using the definitions of the matrices $\mathcal{P}_\tau^\pm$ from \eqref{def_M} and~\eqref{jump1} and \eqref{jump2} this yields
  \begin{equation*}
    \begin{split}
      0 &= \mathcal{P}_\tau^- (\Phi_\lambda \varphi)_+ + \mathcal{P}_\tau^+ (\Phi_\lambda \varphi)_-\\
      &= \rmi \beta \alpha \cdot \nu \big( (\Phi_\lambda \varphi)_+ - (\Phi_\lambda \varphi)_-\big) 
          + \frac{\tau}{2} \big( (\Phi_\lambda \varphi)_+ + (\Phi_\lambda \varphi)_-\big) \\
      &= \beta (I_4 + \tau \beta \mathcal{C}_\lambda) \varphi,
    \end{split}
  \end{equation*}
  i.e. $-1$ is an eigenvalue of $\tau \beta \mathcal{C}_\lambda$.

  Conversely, if $-1$ is an eigenvalue of $\tau \beta \mathcal{C}_\lambda$ with non-trivial eigenfunction $\varphi$, 
  then $u_\lambda := \Phi_\lambda \varphi \neq 0$ satisfies
  $u_\lambda \in H^1(\Omega_+, \mathbb{C}^4) \oplus H^1(\Omega_-, \mathbb{C}^4)$ 
  by~\eqref{equation_gamma_smooth}. Moreover, employing again~\eqref{jump1} and \eqref{jump2} we obtain
  \begin{equation*}
    \begin{split}
      \mathcal{P}_\tau^- u_{\lambda, +} + \mathcal{P}_\tau^+ u_{\lambda, -} 
          &= \mathcal{P}_\tau^- (\Phi_\lambda \varphi)_+ + \mathcal{P}_\tau^+ (\Phi_\lambda \varphi)_-\\
      &= \rmi \beta \alpha \cdot \nu \big( (\Phi_\lambda \varphi)_+ - (\Phi_\lambda \varphi)_-\big)
          + \frac{\tau}{2} \big( (\Phi_\lambda \varphi)_+ + (\Phi_\lambda \varphi)_-\big) \\
      &= \beta (I_4 + \tau \beta \mathcal{C}_\lambda) \varphi = 0,
    \end{split}
  \end{equation*}
  as $\varphi \in \ker(I_4 + \tau \beta \mathcal{C}_\lambda)$. This shows $u_\lambda \in \dom(A_{m, \tau})$.
  Eventually, equation~\eqref{equation_kernel} implies 
  \begin{equation*}
    (A_{m, \tau} - \lambda) u_\lambda = (A_{m, \tau} - \lambda) \Phi_\lambda \varphi = 0
  \end{equation*}
  and hence $\lambda$ is an eigenvalue of $A_{m, \tau}$.
\end{proof}

Using Lemma~\ref{lemma_Birman_Schwinger} and a result from~\cite{AMV15} we deduce finally, that 
$A_{m, \tau}$ has no eigenvalues in $(-|m|, |m|)$, if the interaction strength $\tau$ is small.

\begin{cor} \label{corollary_no_discrete_spectrum_in_gap}
  There exists $\tau_m > 0$ such that $A_{m, \tau}$ has no eigenvalues in $(-|m|, |m|)$ for all $|\tau| < \tau_m$.
\end{cor}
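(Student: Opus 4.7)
The plan is to combine the Birman--Schwinger characterization of Lemma \ref{lemma_Birman_Schwinger} with a uniform operator-norm bound on $\mathcal{C}_\lambda$ across the spectral gap $(-|m|,|m|)$. By Lemma \ref{lemma_Birman_Schwinger}, a point $\lambda\in(-|m|,|m|)\subset\res(A_{m,0})$ is an eigenvalue of $A_{m,\tau}$ if and only if $-1$ is an eigenvalue of the bounded operator $\tau\beta\mathcal{C}_\lambda$ acting on $H^{\frac{1}{2}}(\Sigma,\CC^4)$. Hence the goal reduces to showing that $I+\tau\beta\mathcal{C}_\lambda$ is injective on $H^{\frac{1}{2}}(\Sigma,\CC^4)$ simultaneously for all $\lambda\in(-|m|,|m|)$, provided $|\tau|$ is chosen sufficiently small.

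To achieve this, I would invoke the estimate from \cite{AMV15}, which exploits the exponential decay of $G_\lambda$ for $\lambda\in(-|m|,|m|)$ (where $\sqrt{\lambda^2-m^2}=\rmi\sqrt{m^2-\lambda^2}$) to deliver a constant $C_m>0$ such that
\[
\|\mathcal{C}_\lambda\|_{L^2(\Sigma,\CC^4)\to L^2(\Sigma,\CC^4)}\le C_m\qquad\text{for all }\lambda\in(-|m|,|m|).
\]
Since $\beta$ is unitary on $\CC^4$, the same bound holds for $\beta\mathcal{C}_\lambda$. Setting $\tau_m:=1/C_m$, the Neumann series shows that for $|\tau|<\tau_m$ the operator $I+\tau\beta\mathcal{C}_\lambda$ is a bijection on $L^2(\Sigma,\CC^4)$; in particular it has trivial kernel on this space.

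Finally, any would-be eigenfunction $\varphi$ of $\tau\beta\mathcal{C}_\lambda$ at $-1$ arising from Lemma \ref{lemma_Birman_Schwinger} belongs to $H^{\frac{1}{2}}(\Sigma,\CC^4)\subset L^2(\Sigma,\CC^4)$, so it must vanish by the previous step; this contradicts non-triviality. Thus Lemma \ref{lemma_Birman_Schwinger} precludes eigenvalues of $A_{m,\tau}$ in $(-|m|,|m|)$ for $|\tau|<\tau_m$. The only non-routine ingredient is the uniform $L^2$-bound on $\mathcal{C}_\lambda$ across the real gap, which I borrow directly from \cite{AMV15} rather than re-deriving; everything else is an application of the Birman--Schwinger framework already set up in Lemma \ref{lemma_Birman_Schwinger} and a Neumann series argument.
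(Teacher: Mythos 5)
Your argument is correct and follows essentially the same route as the paper: the Birman--Schwinger reduction of Lemma \ref{lemma_Birman_Schwinger} combined with the uniform $L^2$-bound $\|\mathcal{C}_\lambda\|_{L^2\to L^2}\le C$ for $\lambda\in(-|m|,|m|)$ from \cite[Lemma~3.2]{AMV15}, with $\tau_m:=1/C$. The paper states the conclusion slightly more directly (the norm bound immediately precludes $-1$ from being an eigenvalue) rather than passing through the Neumann series, but the content is identical.
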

\begin{proof}
  First, by \cite[Lemma~3.2]{AMV15} there exists a constant $C(m):=C > 0$ independent of $\lambda$ such that
  \begin{equation*}
    \| \mathcal{C}_\lambda \varphi \|_{L^2(\Sigma, \mathbb{C}^4)}
        \leq C \| \varphi \|_{L^2(\Sigma, \mathbb{C}^4)}
        \qquad \forall \varphi \in H^{\frac{1}{2}}(\Sigma, \mathbb{C}^4), \lambda \in (-|m|, |m|).
  \end{equation*}
  Hence, if $\tau < \tau_m := \frac{1}{C}$, then $-1$ can not be an eigenvalue of
  $\tau \beta \mathcal{C}_\lambda$. From Lemma~\ref{lemma_Birman_Schwinger} we conclude that
  $A_{m, \tau}$ can not have eigenvalues in $(-|m|, |m|)$ for $\tau < \tau_m$.
\end{proof}

\subsection{Proof of self-adjointness}

First, we prove that $A_{m, \tau}$ is symmetric:

\begin{lem} \label{lemma_symmetric}
  Let $m, \tau \in \mathbb{R}$, then the operator $A_{m, \tau}$ given by~\eqref{equation_dom_B_tau}  is symmetric.
\end{lem}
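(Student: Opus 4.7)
The plan is to verify symmetry by a direct integration-by-parts argument on each of $\Omega_+$ and $\Omega_-$, reducing the claim to a purely algebraic identity on $\Sigma$ which is then forced by the transmission condition.

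First, I would fix $u, v \in \dom(A_{m,\tau})$ and split the $L^2(\RR^3,\CC^4)$ inner products as sums of integrals over $\Omega_+$ and $\Omega_-$. On each subdomain $u_\pm, v_\pm \in H^1(\Omega_\pm,\CC^4)$, so I can apply a standard Green's identity for the symbol $-\rmi\alpha\cdot\nabla + m\beta$. The mass term contributes $m\beta$ symmetrically and drops out. Using $\alpha_j^* = \alpha_j$ and that the outward normal of $\Omega_-$ along $\Sigma$ is $-\nu$, this yields
\[
\langle A_{m,\tau}u, v\rangle - \langle u, A_{m,\tau}v\rangle
= -\rmi\iint_\Sigma \Big[\langle (\alpha\!\cdot\!\nu)\,u_+, v_+\rangle_{\CC^4} - \langle (\alpha\!\cdot\!\nu)\,u_-, v_-\rangle_{\CC^4}\Big]\,\dd\Sigma.
\]
Everything now lives on $\Sigma$, and the task is to show the integrand vanishes pointwise.

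Next I would introduce the mean and jump
\[
\{u\} := \tfrac{1}{2}(u_+ + u_-), \qquad [u] := u_+ - u_-,
\]
on $\Sigma$ (and similarly for $v$), and observe the polarization identity
\[
\langle M u_+, v_+\rangle - \langle M u_-, v_-\rangle
= \langle M\{u\}, [v]\rangle + \langle M[u], \{v\}\rangle
\]
for any matrix $M$, applied with $M = \alpha\!\cdot\!\nu$. The transmission condition $\cP_\tau^- u_+ + \cP_\tau^+ u_- = 0$ rewrites as $\tau\{u\} = \cB[u]$, and because $\cB$ is self-adjoint and unitary ($\cB^2 = I_4$), this is equivalent to $[u] = \tau\,\cB\{u\}$, and the same holds for $v$.

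The final step is a two-line calculation with the Dirac matrices. Substituting and using $\cB = -\rmi\beta(\alpha\!\cdot\!\nu)$ and $(\alpha\!\cdot\!\nu)^2 = I_4$,
\[
\langle(\alpha\!\cdot\!\nu)\{u\}, [v]\rangle = \tau\,\langle \cB(\alpha\!\cdot\!\nu)\{u\}, \{v\}\rangle = -\rmi\tau\,\langle \beta\{u\},\{v\}\rangle,
\]
while using in addition the anticommutation $(\alpha\!\cdot\!\nu)\beta = -\beta(\alpha\!\cdot\!\nu)$ from \eqref{eq_commutation},
\[
\langle(\alpha\!\cdot\!\nu)[u], \{v\}\rangle = \tau\,\langle(\alpha\!\cdot\!\nu)\cB\{u\},\{v\}\rangle = +\rmi\tau\,\langle\beta\{u\},\{v\}\rangle.
\]
The two terms cancel, so the boundary integrand is identically zero and $A_{m,\tau}$ is symmetric. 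The only potentially delicate point is treating the case $\tau=0$, where the transmission condition forces $u_+ = u_-$ (equivalently $[u] = 0$) and the identity above degenerates trivially; this case is handled by the same computation. No subtle regularity issue arises since the $H^1$ traces on $\Sigma$ are in $H^{1/2}(\Sigma,\CC^4)$, making all boundary pairings well defined.
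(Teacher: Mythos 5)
Your proof is correct and takes essentially the same route as the paper: integrate by parts on $\Omega_+$ and $\Omega_-$, reduce to a boundary term on $\Sigma$, rewrite it in terms of the mean $\{u\}$ and jump $[u]$, and use the transmission condition together with $\cB(\alpha\cdot\nu)=-\rmi\beta$ and its anticommuted partner to see the two contributions cancel. The only cosmetic difference is that the paper works with the quadratic form $\langle A_{m,\tau}u,u\rangle$ (showing it is real) whereas you handle the full sesquilinear pairing $\langle A_{m,\tau}u,v\rangle$ directly; these are equivalent, and your explicit treatment of the $\tau=0$ degeneracy and the $H^{1/2}$ trace remark are accurate but not strictly needed.
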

\begin{proof}
  Let $u \in \dom(A_{m, \tau})$. Employing an integration by parts we have
  \begin{multline*}
      \langle A_{m, \tau} u, u \rangle_{L^2(\mathbb{R}^3, \mathbb{C}^4)}
              - \langle  u, A_{m, \tau} u \rangle_{L^2(\mathbb{R}^3, \mathbb{C}^4)} = \langle -\rmi \alpha \cdot \nu u_+, u_+ \rangle_{L^2(\Sigma, \mathbb{C}^4)}           -\langle -\rmi \alpha \cdot \nu u_-, u_- \rangle_{L^2(\Sigma, \mathbb{C}^4)} \\
      = \frac{1}{2} \langle -i \alpha \cdot \nu (u_+ - u_-), u_+ + u_- \rangle_{L^2(\Sigma, \mathbb{C}^4)} 
      - \frac{1}{2} \langle u_+ + u_-, -i \alpha \cdot \nu (u_+ - u_-) \rangle_{L^2(\Sigma, \mathbb{C}^4)}.
  \end{multline*}
  Using the transmission condition~\eqref{eqtrans2}, the anti-commutation relation~\eqref{eq_commutation}
  and $\beta^2 = I_4$ the last term can be rewritten
  \begin{multline*}
      \frac{1}{2} \langle -i \alpha \cdot \nu (u_+ - u_-), u_+ + u_- \rangle_{L^2(\Sigma, \mathbb{C}^4)} 
          - \frac{1}{2} \langle u_+ + u_-, -i \alpha \cdot \nu (u_+ - u_-) \rangle_{L^2(\Sigma, \mathbb{C}^4)} \\
      = \frac{\tau}{4} \langle \beta (u_+ + u_-), u_+ + u_- \rangle_{L^2(\Sigma, \mathbb{C}^4)} 
          - \frac{\tau}{4} \langle u_+ + u_-, \beta (u_+ + u_-) \rangle_{L^2(\Sigma, \mathbb{C}^4)} = 0,
  \end{multline*}
  which shows that $\langle A_{m, \tau} u, u \rangle_{L^2(\mathbb{R}^3, \mathbb{C}^4)} \in \mathbb{R}$. Since 
  $u \in \dom(A_{m, \tau})$ was arbitrary, the claim of this lemma follows.
\end{proof}

The following technical result will play a crucial role in the proof of the self-adjointness of $A_{m, \tau}$:

\begin{lem} \label{lemma_inverse_M}
  Let~$\tau \in \mathbb{R}$ and let for $\lambda \in \mathbb{C} \setminus \mathbb{R}$ the operator 
  $\mathcal{C}_\lambda$ be defined by~\eqref{def_M_lambda}. Then the operator $I_4 + \tau \beta \mathcal{C}_\lambda$
   admits a bounded and everywhere defined inverse in $H^{\frac{1}{2}}(\Sigma, \mathbb{C}^4)$.
\end{lem}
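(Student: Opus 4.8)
The plan is to reduce the claim, via the bounded inverse theorem, to showing that $T_\lambda:=I_4+\tau\beta\mathcal{C}_\lambda$ is a \emph{bijection} of $H^{\frac{1}{2}}(\Sigma,\CC^4)$ onto itself, and to obtain this by proving separately that $T_\lambda$ is injective and that it is a Fredholm operator of index $0$ (an injective Fredholm operator of index $0$ is automatically surjective, since the codimension of its range equals the dimension of its kernel).

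For injectivity I would only use facts already at hand. Since $\CC\setminus\RR\subseteq\res(A_{m,0})$, the operator $\mathcal{C}_\lambda$ and hence $T_\lambda$ are well defined; by Lemma~\ref{lemma_symmetric} the operator $A_{m,\tau}$ is symmetric, so it has no eigenvalue in $\CC\setminus\RR$; and by the Birman--Schwinger correspondence of Lemma~\ref{lemma_Birman_Schwinger} this says precisely that $-1$ is not an eigenvalue of $\tau\beta\mathcal{C}_\lambda$ in $H^{\frac{1}{2}}(\Sigma,\CC^4)$, i.e.\ $\ker T_\lambda=\{0\}$.

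For the Fredholm property the key point is the algebraic relation: $\beta\mathcal{C}_\lambda\beta+\mathcal{C}_\lambda$ is compact in $H^{\frac{1}{2}}(\Sigma,\CC^4)$. This I would obtain from the integral kernel $G_\lambda$ of $\mathcal{C}_\lambda$: conjugation by $\beta$ leaves the terms $\lambda I_4$ and $m\beta$ of $G_\lambda$ unchanged and sends $\alpha\cdot z$ to $-(\alpha\cdot z)$, because $\beta(\alpha\cdot z)\beta=-(\alpha\cdot z)$ by~\eqref{eq_commutation}; hence all the Calderón--Zygmund type parts of $G_\lambda$ cancel in the sum and
\[
\beta\,G_\lambda(z)\,\beta+G_\lambda(z)=2(\lambda I_4+m\beta)\,\frac{e^{\rmi\sqrt{\lambda^2-m^2}\,|z|}}{4\pi|z|},
\]
a weakly singular (single-layer type) kernel, whose integral operator on $\Sigma$ maps $H^{-\frac{1}{2}}(\Sigma,\CC^4)$ boundedly into $H^{\frac{1}{2}}(\Sigma,\CC^4)$ and is therefore compact in $H^{\frac{1}{2}}(\Sigma,\CC^4)$, by the same type of argument that yields~\eqref{C_lambda_compact} (see \cite{BH17,OV16}). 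Combining this with the smoothing property of $\mathcal{C}_\lambda^2-\frac{1}{4}I_4$ from~\eqref{C_lambda_compact} and the boundedness of $\mathcal{C}_\lambda$ gives
\[
(\beta\mathcal{C}_\lambda)^2=(\beta\mathcal{C}_\lambda\beta)\,\mathcal{C}_\lambda=-\mathcal{C}_\lambda^2+(\text{compact})=-\tfrac14 I_4+(\text{compact}),
\]
so that, for every $s\in[0,1]$,
\[
(I_4-s\tau\beta\mathcal{C}_\lambda)(I_4+s\tau\beta\mathcal{C}_\lambda)=I_4-s^2\tau^2(\beta\mathcal{C}_\lambda)^2=\frac{4+s^2\tau^2}{4}\,I_4+(\text{compact}),
\]
and likewise with the order of the two factors reversed. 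As $\frac{4+s^2\tau^2}{4}>0$, both products are compact perturbations of boundedly invertible operators, so each factor $I_4\pm s\tau\beta\mathcal{C}_\lambda$ has a two-sided parametrix modulo compact operators --- namely $\frac{4}{4+s^2\tau^2}\,(I_4\mp s\tau\beta\mathcal{C}_\lambda)$ --- and is in particular Fredholm. Thus $s\mapsto I_4+s\tau\beta\mathcal{C}_\lambda$ is a norm-continuous path of Fredholm operators from $I_4$ to $T_\lambda$, so $\mathrm{ind}\,T_\lambda=\mathrm{ind}\,I_4=0$. Together with injectivity this yields that $T_\lambda$ is bijective, hence boundedly invertible.

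The one non-routine ingredient, and the main obstacle, is the compactness in $H^{\frac{1}{2}}(\Sigma,\CC^4)$ of the integral operators with weakly singular kernel appearing above (and, implicitly, behind~\eqref{C_lambda_compact}); I would handle it exactly as in \cite{AMV14,BH17,OV16}, by regarding such operators as pseudodifferential operators of order $-1$ on the closed surface $\Sigma$ and invoking the compactness of the embedding $H^{\frac{3}{2}}(\Sigma)\hookrightarrow H^{\frac{1}{2}}(\Sigma)$. The rest --- symmetry of $A_{m,\tau}$, the Birman--Schwinger principle, the parametrix construction, and the homotopy invariance of the Fredholm index --- is standard.
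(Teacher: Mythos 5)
Your argument is correct, and it rests on the same two pillars as the paper's proof: injectivity of $T_\lambda=I_4 + \tau\beta\mathcal{C}_\lambda$ via the Birman--Schwinger correspondence (Lemma~\ref{lemma_Birman_Schwinger}) together with the symmetry of $A_{m,\tau}$, and the algebraic identity $(I_4 + \tau\beta\mathcal{C}_\lambda)(I_4 - \tau\beta\mathcal{C}_\lambda) = \bigl(1 + \tfrac{\tau^2}{4}\bigr)I_4 + (\text{compact})$, whose compact part is built from the smoothing property \eqref{C_lambda_compact} of $\mathcal{C}_\lambda^2-\tfrac14 I_4$ and from the single-layer structure of the anticommutator of $\mathcal{C}_\lambda$ with $\beta$ (your computation of $\beta G_\lambda\beta+G_\lambda$ is equivalent, after multiplying by $\beta$, to the paper's kernel for $\beta\mathcal{C}_\lambda+\mathcal{C}_\lambda\beta$). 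The only divergence is in the final step: the paper notes that both $I_4 \pm \tau\beta\mathcal{C}_\lambda$ are injective (Birman--Schwinger applied to $\pm\tau$), so the product is injective, and then one application of the Fredholm alternative gives that the product---and therefore $\ran T_\lambda$, which contains it---is all of $H^{1/2}(\Sigma,\CC^4)$; you instead read the same identity as a two-sided parametrix relation, conclude that $T_\lambda$ is Fredholm, and use the homotopy $s\mapsto I_4 + s\tau\beta\mathcal{C}_\lambda$ to get $\mathrm{ind}\,T_\lambda=0$. Your route needs injectivity of $T_\lambda$ only (not of $I_4 - \tau\beta\mathcal{C}_\lambda$), at the modest price of invoking homotopy invariance of the Fredholm index; the paper's route is a touch shorter, since after the Fredholm alternative no index theory is required. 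Either way, the closed graph theorem finishes the proof.
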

\begin{proof}
  First, we note that $I_4 + \tau \beta \mathcal{C}_\lambda$ is injective, as otherwise the symmetric operator 
  $A_{m, \tau}$ would have the non-real eigenvalue $\lambda$ by Lemma~\ref{lemma_Birman_Schwinger}. To show that
  $I_4 + \tau \beta \mathcal{C}_\lambda$ is also surjective note
  \begin{equation*}
    \begin{split}
      \ran\big( I_4 + \tau \beta \mathcal{C}_\lambda \big) 
          &\supset \ran\big[ \big( I_4 + \tau \beta \mathcal{C}_\lambda \big)
          \big( I_4 - \tau \beta \mathcal{C}_\lambda \big) \big] \\
      &= \ran\big[ \big( I_4 + \tau \beta \mathcal{C}_\lambda \big)
          \big( I_4 + \mathcal{C}_\lambda \tau \beta - \tau ( \mathcal{C}_\lambda \beta + \beta \mathcal{C}_\lambda) \big) \big]\\
      &= \ran \big[ I_4 +\tau^2 \beta \mathcal{C}_\lambda^2 \beta 
          - \tau^2 \beta \mathcal{C}_\lambda ( \beta \mathcal{C}_\lambda + \mathcal{C}_\lambda \beta )\big].
    \end{split}
  \end{equation*}
  Using the anti-commutation relations~\eqref{eq_commutation} we obtain that~$\beta \mathcal{C}_\lambda + \mathcal{C}_\lambda \beta$
  is an integral operator with kernel
  \begin{equation*}
    K(x, y) = \big( \lambda \beta + m I_4 \big) \frac{e^{\rmi \sqrt{\lambda^2 - m^2} |x-y|}}{2 \pi |x-y|},
  \end{equation*}
  i.e. $\beta \mathcal{C}_\lambda + \mathcal{C}_\lambda \beta$ is a constant matrix times the 
  single layer boundary integral operator for $-\Delta + m^2 - \lambda^2$ 
  which is compact in $H^{\frac{1}{2}}(\Sigma, \mathbb{C}^4)$; cf., e.g., \cite[Theorem~6.11]{M00}.
  Moreover, by~\eqref{C_lambda_compact} 
  also $\mathcal{C}_\lambda^2 - \frac{1}{4} I_4$ is compact in~$H^{\frac{1}{2}}(\Sigma, \mathbb{C}^4)$.
  Since $\mathcal{C}_\lambda$ is bounded in $H^{\frac{1}{2}}(\Sigma, \mathbb{C}^4)$ we deduce that
  \begin{equation*}
    \mathcal{K}_\lambda := \tau^2 \beta \left( \mathcal{C}_\lambda^2 - \frac{1}{4} I_4 \right) \beta 
        - \tau^2 \beta \mathcal{C}_\lambda ( \beta \mathcal{C}_\lambda + \mathcal{C}_\lambda \beta)
  \end{equation*}
  is compact in $H^{\frac{1}{2}}(\Sigma, \mathbb{C}^4)$.
  Note that both operators $I_4 + \tau \beta \mathcal{C}_\lambda$ and $I_4 - \tau \beta \mathcal{C}_\lambda$ are injective,
  as otherwise one of the symmetric operators $A_{m, \pm \tau}$ would have the non-real eigenvalue $\lambda$ by 
  Lemma~\ref{lemma_Birman_Schwinger}.
  Hence, we get finally by Fredholm's alternative that
  \begin{equation*}
     \ran \big[ I_4 + \tau^2 \beta \mathcal{C}_\lambda^2 \beta 
          - \tau^2 \beta \mathcal{C}_\lambda ( \beta \mathcal{C}_\lambda + \mathcal{C}_\lambda \beta) \big] 
      = \ran \left[ \left(1 + \frac{\tau^2}{4} \right) I_4 + \mathcal{K}_\lambda \right]
          = H^{\frac{1}{2}}(\Sigma, \mathbb{C}^4).
     \end{equation*}
  Therefore, we deduce eventually 
  \begin{equation*}
    \begin{split}
      \ran\big( I_4 + \tau \beta \mathcal{C}_\lambda \big) 
          &\supset \ran \big[ I_4 + \tau^2 \beta \mathcal{C}_\lambda^2 \beta 
          - \tau^2 \beta \mathcal{C}_\lambda ( \beta \mathcal{C}_\lambda + \mathcal{C}_\lambda \beta) \big] 
          = H^{\frac{1}{2}}(\Sigma, \mathbb{C}^4) 
    \end{split}
  \end{equation*}
  and thus, $I_4 + \tau \beta \mathcal{C}_\lambda$ is surjective.
  This shows that the closed operator $I_4 + \tau \beta \mathcal{C}_\lambda$ is bijective and hence,
  it admits a bounded and everywhere defined inverse by the closed graph theorem.
\end{proof}

Now, we are prepared to prove the self-adjointness of $A_{m, \tau}$ which is the central point of Theorem~\ref{thm-basic}:

\begin{prop} \label{theorem_basic_properties}
  Let $m, \tau \in \mathbb{R}$ and let $A_{m, \tau}$ be defined by \eqref{equation_dom_B_tau}. 
  Then, $A_{m, \tau}$ is self-adjoint and for any $\lambda \in \mathbb{C} \setminus \mathbb{R}$ one has the resolvent formula
  \begin{equation*}
    (A_{m, \tau} - \lambda)^{-1} = (A_{m, 0} - \lambda)^{-1} - \Phi_\lambda \big( I_4 + \tau \beta \mathcal{C}_\lambda \big)^{-1} 
        \tau \beta \Phi_{\overline{\lambda}}^*.
  \end{equation*}
\end{prop}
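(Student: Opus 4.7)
Since Lemma~\ref{lemma_symmetric} already gives symmetry of $A_{m,\tau}$, self-adjointness will follow if I show that $\mathrm{ran}(A_{m,\tau}-\lambda) = L^2(\RR^3,\CC^4)$ for some (hence any) $\lambda \in \CC\setminus\RR$. My plan is to \emph{guess} the resolvent and verify it. The standard Krein-type ansatz is $u = u_0 + \Phi_\lambda\varphi$, where $u_0:=(A_{m,0}-\lambda)^{-1}f\in H^1(\RR^3,\CC^4)$ and $\varphi\in H^{1/2}(\Sigma,\CC^4)$ is to be determined so that $u$ lies in $\dom(A_{m,\tau})$. Since $(-\rmi\alpha\cdot\nabla+m\beta-\lambda)\Phi_\lambda\varphi = 0$ on $\Omega_\pm$ by \eqref{equation_kernel} and $\Phi_\lambda\varphi \in H^1(\Omega_+,\CC^4)\oplus H^1(\Omega_-,\CC^4)$ by \eqref{equation_gamma_smooth}, this $u$ automatically satisfies $(A_{m,\tau}-\lambda)u = f$ away from $\Sigma$; the only nontrivial requirement is the transmission condition $\cP_\tau^- u_+ + \cP_\tau^+ u_- = 0$ on $\Sigma$.

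The key computation is to translate the transmission condition into an equation for $\varphi$. Using $\cP_\tau^\pm = \tau/2\pm\cB$ with $\cB = -\rmi\beta\alpha\cdot\nu$, together with the jump relations \eqref{jump1}, \eqref{jump2} applied to $\Phi_\lambda\varphi$, the same manipulation that appears in the proof of Lemma~\ref{lemma_Birman_Schwinger} yields
\[
\cP_\tau^- u_+ + \cP_\tau^+ u_- = \tau\, u_0|_\Sigma + \beta(I_4 + \tau\beta\cC_\lambda)\varphi,
\]
where the continuity of $u_0$ across $\Sigma$ is used to identify the trace of $u_0$ from both sides. Moreover, by \eqref{equation_gamma_lambda_star} we have $u_0|_\Sigma = \Phi_{\overline{\lambda}}^* f$. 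Thus the transmission condition becomes $(I_4 + \tau\beta\cC_\lambda)\varphi = -\tau\beta\Phi_{\overline{\lambda}}^* f$, which by Lemma~\ref{lemma_inverse_M} has the unique solution $\varphi = -(I_4+\tau\beta\cC_\lambda)^{-1}\tau\beta\Phi_{\overline{\lambda}}^* f$ in $H^{1/2}(\Sigma,\CC^4)$.

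With this $\varphi$, the function $u = u_0 + \Phi_\lambda\varphi$ belongs to $\dom(A_{m,\tau})$ and satisfies $(A_{m,\tau}-\lambda)u = f$, proving that the operator $R_\lambda := (A_{m,0}-\lambda)^{-1} - \Phi_\lambda(I_4+\tau\beta\cC_\lambda)^{-1}\tau\beta\Phi_{\overline{\lambda}}^*$ is a right inverse of $A_{m,\tau}-\lambda$. In particular, $\mathrm{ran}(A_{m,\tau}-\lambda) = L^2(\RR^3,\CC^4)$ for every $\lambda\in\CC\setminus\RR$; applying this at $\lambda$ and $\overline{\lambda}$ and invoking symmetry (Lemma~\ref{lemma_symmetric}) concludes that $A_{m,\tau}$ is self-adjoint, and then $R_\lambda$ is automatically its resolvent, yielding the claimed formula.

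The main obstacle is really packaged into Lemma~\ref{lemma_inverse_M}: without the bounded invertibility of $I_4 + \tau\beta\cC_\lambda$ on $H^{1/2}(\Sigma,\CC^4)$ the ansatz cannot be closed. Beyond that, the only care required is bookkeeping—checking that $\Phi_{\overline{\lambda}}^* f$ actually lies in $H^{1/2}(\Sigma,\CC^4)$ (immediate from \eqref{equation_gamma_lambda_star} since $(A_{m,0}-\overline{\lambda})^{-1} f \in H^1(\RR^3,\CC^4)$), and that each algebraic step in the jump computation correctly uses $(\alpha\cdot\nu)^2 = I_4$ and $\beta^2 = I_4$.
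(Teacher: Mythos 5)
Your proof is correct and follows essentially the same route as the paper's: a Krein-type ansatz $u = (A_{m,0}-\lambda)^{-1}f + \Phi_\lambda\varphi$, reduction of the transmission condition via the jump relations \eqref{jump1}--\eqref{jump2} to the equation $(I_4+\tau\beta\mathcal{C}_\lambda)\varphi = -\tau\beta\Phi_{\overline{\lambda}}^*f$, solved via Lemma~\ref{lemma_inverse_M}. The only stylistic difference is that you derive $\varphi$ as the unknown, whereas the paper writes down the candidate resolvent directly and then verifies the transmission condition, but the computation is the same.
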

\begin{proof}
  Since $A_{m, \tau}$ is symmetric by Lemma~\ref{lemma_symmetric} it suffices to show that
  $\ran(A_{m, \tau} - \lambda) = L^2(\mathbb{R}^3, \mathbb{C}^4)$ for $\lambda \in \mathbb{C} \setminus \mathbb{R}$.
  Let $\lambda \in \mathbb{C} \setminus \mathbb{R}$ and $v \in L^2(\mathbb{R}^3, \mathbb{C}^4)$ be fixed.
  We define
  \begin{equation} \label{Krein1}
    u := (A_{m, 0} - \lambda)^{-1} v - \Phi_\lambda \big( I_4 + \tau \beta \mathcal{C}_\lambda \big)^{-1} 
        \tau \beta \Phi_{\overline{\lambda}}^* \, v.
  \end{equation}
  Note that $u$ is well-defined, as 
  $\Phi_{\overline{\lambda}}^* \, v = \big( (A_{m, 0} - \lambda)^{-1} v \big)  \big|_\Sigma \in H^{\frac{1}{2}}(\Sigma, \mathbb{C}^4)$,
  see \eqref{equation_gamma_lambda_star},
  and $I_4 + \tau \beta \mathcal{C}_\lambda$ is bijective in $H^{\frac{1}{2}}(\Sigma, \mathbb{C}^4)$ by Lemma~\ref{lemma_inverse_M}.
  We are going to prove that $u \in \dom(A_{m, \tau})$ and $(A_{m, \tau} - \lambda) u = v$. Then, this implies the claim on the 
  range of $A_{m, \tau} - \lambda$ and the resolvent formula.
  
  Due to the mapping properties of $\Phi_{\overline{\lambda}}^*$ and $\mathcal{C}_\lambda$ we have 
  \begin{equation*}
    \big( I_4 + \tau \beta \mathcal{C}_\lambda \big)^{-1} 
        \tau \beta \Phi_{\overline{\lambda}}^* \, v \in H^{\frac{1}{2}}(\Sigma, \mathbb{C}^4).
  \end{equation*}
  Therefore, we have by~\eqref{equation_gamma_smooth} that $u \in H^1(\Omega_+, \mathbb{C}^4) \oplus H^1(\Omega_-, \mathbb{C}^4)$.
  Moreover, using~\eqref{jump1}, \eqref{jump2}, \eqref{equation_gamma_lambda_star} 
  and $\dom (A_{m, 0}) = \ran(A_{m, 0} - \lambda) = H^1(\mathbb{R}^3, \mathbb{C}^4)$ 
  we deduce
  \begin{equation*}
    \begin{split}
      \mathcal{P}_\tau^-& u_+ + \mathcal{P}_\tau^+ u_- 
          = \frac{\tau}{2} (u_+ + u_-) + \rmi \beta \alpha \cdot \nu (u_+ - u_-) \\
      &= \tau \big( (A_{m, 0} - \lambda)^{-1} v \big) \big|_\Sigma 
          - \tau \mathcal{C}_\lambda \big( I_4 + \tau \beta \mathcal{C}_\lambda \big)^{-1} \tau \beta \Phi_{\overline{\lambda}}^*\, v
          - \beta \big( I_4 + \tau \beta \mathcal{C}_\lambda \big)^{-1} \tau \beta \Phi_{\overline{\lambda}}^*\, v \\
      &= \tau \Phi_{\overline{\lambda}}^* \,v - \beta (I_4 + \tau \beta \mathcal{C}_\lambda)
          \big( I_4 + \tau \beta \mathcal{C}_\lambda \big)^{-1} \tau \beta \Phi_{\overline{\lambda}}^*\, v = 0,
    \end{split}
  \end{equation*}
  i.e. $u \in \dom(A_{m, \tau})$. Using \eqref{equation_kernel} we have 
  $(A_{m, \tau} - \lambda) u = v$. Hence, the theorem is shown.
\end{proof}

\subsection{Basic properties}\label{sec-ae}

In this section we are going to prove the points  (a)--(e) of Theorem \ref{thm-basic}.
To prove (a) take any $\lambda \in \mathbb{C} \setminus \mathbb{R}$.
First, we note that by Lemma~\ref{lemma_inverse_M} the inverse
$(I_4 + \tau \beta \mathcal{C}_\lambda)^{-1}$ is a bounded operator in $H^{\frac{1}{2}}(\Sigma, \mathbb{C}^4)$.
Moreover, since $\ran \Phi_{\overline{\lambda}}^* = H^{\frac{1}{2}}(\Sigma, \mathbb{C}^4)$, see~\eqref{equation_gamma_lambda_star}, 
and since $\Phi_{\overline{\lambda}}^*: L^2(\mathbb{R}^3, \mathbb{C}^4) \rightarrow L^2(\Sigma, \mathbb{C}^4)$ is bounded,
it follows from the closed graph theorem that the product
\begin{equation*}
  \big( I_4 + \tau \beta \mathcal{C}_\lambda \big)^{-1} \tau \beta \Phi_{\overline{\lambda}}^*:
  L^2(\mathbb{R}^3, \mathbb{C}^4) \rightarrow H^{\frac{1}{2}}(\Sigma, \mathbb{C}^4)
\end{equation*}
is bounded. As the embedding $\iota: H^{\frac{1}{2}}(\Sigma, \mathbb{C}^4) \rightarrow L^2(\Sigma, \mathbb{C}^4)$
is compact and $\Phi_\lambda$ is bounded, we deduce with the help of Theorem~\ref{theorem_basic_properties} that
\begin{equation*}
  (A_{m, \tau} - \lambda)^{-1} - (A_{m, 0} - \lambda)^{-1}  
  = -\Phi_\lambda \big( I_4 + \tau \beta \mathcal{C}_\lambda \big)^{-1} \tau \beta \Phi_{\overline{\lambda}}^*
\end{equation*}
is compact in $L^2(\mathbb{R}^3, \mathbb{C}^4)$. 
Hence, we find 
\begin{equation*}
  \spec_\ess(A_{m, \tau}) = \spec_\ess(A_{m, 0}) = (-\infty, -|m|] \cup [|m|, \infty).
\end{equation*}
This is statement~(a) of Theorem~\ref{thm-basic}.

Next, we define the charge conjugation operator $\cC$ and the time reversal operator $\cT$ by
$\cC u := \rmi \beta \alpha_2 \overline{u}$ and $\cT u := -\rmi \gamma_5 \alpha_2 \overline{u}$.
Then a simple computation shows that $\cC^2 = -\cT^2 = \mathrm{Id}$. Furthermore,
$\mathcal{C}$ and $\mathcal{T}$ leave $\dom(A_{m, \tau})$ invariant and
      \begin{equation*}
     A_{m,\tau} \cC = -\cC A_{m,\tau} \qquad \text{ and } \qquad A_{m,\tau} \cT = \cT A_{m,\tau}. 
    \end{equation*}

Assume that $\lambda\in \spec (A_{m,\tau})$. Then there exists a sequence $(u_j)\in\dom(A_{m,\tau})$ with $\|u_j\|=1$
and $(A_{m,\tau}-\lambda)u_j\to 0$ when $j\rightarrow+\infty$. Then for $v_j:=\cC u_j$ one has $\|v_j\|=1$ and 
$(A_{m,\tau}+\lambda)v_j = -\cC(A_{m,\tau}-\lambda)u_j\to 0$, i.e. $-\lambda\in \spec (A_{m,\tau})$.
This proves the point (b).

Furthermore, if $u\in \ker(A_{m,\tau}-\lambda)$, then also $\cT u\in \ker(A_{m,\tau}-\lambda)$. Moreover $\cT^2 u=-u$
and a simple calculation using the definition of~$\cT$ 
shows $\langle u,\cT u\rangle_{L^2(\RR^3,\CC^4)}= - \langle u,\cT u\rangle_{L^2(\RR^3,\CC^4)}$ 
and hence $\langle u,\cT u\rangle_{L^2(\RR^3,\CC^4)}=0$. This proves (c).

In order to prove (d), we note first that the claim is trivial for $\tau = \pm 2$. For $\tau \neq \pm 2$ consider the unitary transform
$$
	V :\left\{\begin{array}{ccc} L^2(\Omega_+,\mathbb{C}^4) \oplus L^2(\Omega_-,\mathbb{C}^4) &\longrightarrow& L^2(\Omega_+,\mathbb{C}^4) \oplus L^2(\Omega_-,\mathbb{C}^4)\\
	(u_+,u_-) & \mapsto & (u_+,-u_-).
		\end{array}\right.
$$
Let $\tau \neq 0$. For $u\in\dom(A_{m,\tau})$ we have $(Vu) \in \dom(A_{m,\frac4\tau})$ because $\cR_\tau^+ = - \cR_{\frac4\tau}^+$. Hence, we have $	A_{m,\tau} = V^{-1} A_{m,\frac4\tau} V$
which yields that $A_{m,\tau}$ and $A_{m,\frac4\tau}$ are unitarily equivalent.

Finally, the point (e) follows from the pointwise equality $\gamma_5 A_{m,\tau}= A_{-m,-\tau}\gamma_5$.

\section{Variational approach}\label{sec-var}

\subsection{Quadratic form for the square of the operator}

In order to proceed with a more detailed study, let us introduce some geometric quantities.
Throughout this section assume that $\Sigma$ be the boundary of a bounded $C^4$ smooth domain.
Recall that at each point $s\in\Sigma$ the Weingarten map $S:T_s\Sigma\to T_s\Sigma$ is defined by
$S:=\dd \nu(s)$. Its eigenvalues $\kappa_1$ and $\kappa_2$  are called the principal curvatures,
and we denote by
\begin{equation} \label{def_curvature}
M:=\dfrac{\kappa_1+\kappa_2}{2}, \quad K:=\kappa_1 \kappa_2,
\end{equation}
the mean  curvature and the Gauss curvature of $\Sigma$, respectively.

The following result will be of crucial importance for the subsequent analysis:

\begin{prop} \label{theorem_form_B_tau_square}
  Let $m\in\RR$ and $\tau \in \RR\setminus\{0,-2,2\}$, then for any $u\in \dom(A_{m,\tau})$ there holds
	\begin{multline} \label{equation_form}
          \| A_{m,\tau} u \|^2_{L^2(\RR^3,\CC^4)} = \iiint_{\RR^3\setminus \Sigma} \big|\nabla u\big|^2\, \dd x + m^2 \iiint_{\RR^3} | u|^2\, \dd x\\
          + \frac{2 m}{\tau} \iint_\Sigma | u_+ - u_-|^2 \dd\Sigma + \iint_\Sigma M |u_+|^2 \dd\Sigma - \iint_\Sigma M |u_-|^2 \dd\Sigma
  \end{multline}
	with $\dd\Sigma$ being the surface measure on $\Sigma$.
 \end{prop}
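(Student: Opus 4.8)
The plan is to compute $\|A_{m,\tau}u\|^2$ by expanding the square of the Dirac operator and carefully tracking the boundary contributions that arise from integration by parts on $\Omega_+$ and $\Omega_-$ separately. First I would recall the pointwise algebraic identity $(-\rmi\alpha\cdot\nabla + m\beta)^2 = -\Delta + m^2$, which holds on each $\Omega_\pm$ because away from $\Sigma$ the function $u$ is in $H^1$ and $(-\rmi\alpha\cdot\nabla + m\beta)u_\pm$ is again a legitimate $L^2$-function; strictly one works at the level of the sesquilinear form, writing $\|A_{m,\tau}u\|^2 = \sum_{\pm}\|(-\rmi\alpha\cdot\nabla+m\beta)u_\pm\|^2_{L^2(\Omega_\pm)}$ and integrating by parts once in each domain. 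Each integration by parts produces a bulk term $\iiint_{\Omega_\pm}(|\nabla u_\pm|^2 + m^2|u_\pm|^2)\,\dd x$ together with a boundary term on $\Sigma$ of the form $\pm\langle(-\rmi\alpha\cdot\nu)u_\pm, (-\rmi\alpha\cdot\nabla + m\beta)u_\pm\rangle_{L^2(\Sigma)}$ (the sign coming from the orientation of $\nu$ relative to each domain). Summing the bulk contributions gives exactly the first two terms of \eqref{equation_form}.

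The heart of the matter is the boundary term
\[
B(u):=\langle -\rmi\alpha\cdot\nu\, u_+, (-\rmi\alpha\cdot\nabla+m\beta)u_+\rangle_{L^2(\Sigma)} - \langle -\rmi\alpha\cdot\nu\, u_-, (-\rmi\alpha\cdot\nabla+m\beta)u_-\rangle_{L^2(\Sigma)},
\]
which must be shown to equal the last three terms of \eqref{equation_form}. The strategy is to decompose the tangential gradient using the surface and normal derivatives: for a function $v\in H^1(\Omega_\pm)$ one writes $\alpha\cdot\nabla v = (\alpha\cdot\nu)\partial_\nu v + \alpha\cdot\nabla_\Sigma v$ at the boundary, where $\nabla_\Sigma$ is the tangential gradient. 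Since $(\alpha\cdot\nu)^2 = I_4$, the normal-derivative part contributes $\langle u_\pm, -\rmi\partial_\nu u_\pm\rangle$-type terms; but crucially these are \emph{not} directly controlled — instead one eliminates $\partial_\nu u_\pm$ by using the transmission condition together with the fact that $u$ solves no equation, so the cleaner route is to replace $(-\rmi\alpha\cdot\nabla+m\beta)u_\pm$ at $\Sigma$ by its boundary values and exploit that $-\rmi\alpha\cdot\nu$ anticommutes with $-\rmi\alpha\cdot\nabla_{\Sigma}$-free combinations. Concretely, I would use the Lichnerowicz–Weitzenböck/intrinsic-geometry computation: on $\Sigma$ the operator $(-\rmi\alpha\cdot\nu)(-\rmi\alpha\cdot\nabla + m\beta)$ restricted to the boundary decomposes into an intrinsic first-order operator on $\Sigma$ plus the scalar mean-curvature term $\pm M$, which is precisely where the $\iint_\Sigma M|u_\pm|^2$ terms come from (the sign following from which side of $\Sigma$ we are on and the orientation of $\nu$). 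This is the approach reminiscent of \cite[p.~379]{HMR02} and \cite{ALTR16} alluded to in the introduction.

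Next I would invoke the transmission condition in the form $u_+ + u_- = \frac{2}{\tau}\cB(u_+ - u_-)$ from \eqref{eqtrans2}, recalling $\cB = -\rmi\beta\alpha\cdot\nu$, hence $\rmi\alpha\cdot\nu = \beta\cB$ and $-\rmi\alpha\cdot\nu = -\beta\cB$. The intrinsic first-order part of $B(u)$, when the two boundary integrals are combined, should be recast in terms of the sum $u_+ + u_-$ and the difference $u_+ - u_-$ and then simplified using the transmission condition; the self-adjointness computation in Lemma~\ref{lemma_symmetric} already shows how the skew-symmetric pieces cancel, and a parallel but more refined bookkeeping here should collapse the intrinsic part to the single term $\frac{2m}{\tau}\iint_\Sigma|u_+ - u_-|^2\,\dd\Sigma$. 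The $m$-dependence enters through the $m\beta$ term in the Dirac operator: the cross term $\langle -\rmi\alpha\cdot\nu\, u_\pm, m\beta u_\pm\rangle$, after using $-\rmi\alpha\cdot\nu\,\beta = -\cB$ and $\cB^* = \cB$, combines with the transmission condition to produce exactly $\frac{2m}{\tau}|u_+ - u_-|^2$ up to the intrinsic pieces that cancel. I would verify the constant $\frac{2}{\tau}$ carefully since it is the characteristic signature of this interaction.

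The main obstacle I anticipate is the rigorous justification of the boundary integration by parts and the extraction of the mean-curvature term: functions $u_\pm\in H^1(\Omega_\pm)$ have traces in $H^{1/2}(\Sigma)$ but $\partial_\nu u_\pm$ need not exist as an $L^2(\Sigma)$ function, so the identity $\|A_{m,\tau}u\|^2 = \text{(RHS)}$ must be established first for a dense subclass (e.g. $u_\pm \in H^2(\Omega_\pm)$ satisfying the transmission condition, or $u$ in a core obtained via the resolvent formula of Proposition~\ref{theorem_basic_properties}) where all manipulations are classical, and then extended to all of $\dom(A_{m,\tau})$ by a density/closedness argument — one checks that both sides are continuous in an appropriate graph norm. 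The geometric computation yielding the $\pm M$ terms requires choosing local coordinates adapted to $\Sigma$ (tubular neighborhood, normal coordinates) and differentiating the normal field $\nu$, which brings in the Weingarten map $S = \dd\nu$ and its trace $2M$; this is routine but must be done with attention to sign conventions tied to the outward normal of $\Omega_+$. Once the dense-subclass identity and the closedness are in place, the proof concludes.
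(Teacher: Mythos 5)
Your overall route is the paper's: prove the identity on $\widetilde{\dom}(A_{m,\tau})=\dom(A_{m,\tau})\cap H^2(\RR^3\setminus\Sigma,\CC^4)$ and extend by density (this is Lemma~\ref{lem:dens_domtilde}); integrate by parts in $\Omega_\pm$ to produce the bulk terms and boundary terms; split the boundary term using $(\alpha\cdot\nu)(\alpha\cdot\nabla)=\nu\cdot\nabla+\rmi\gamma_5\,\alpha\cdot(\nu\times\nabla)$ (the paper's~\eqref{lemma_alpha_nu}); and extract the curvature from the variation of $\nu$ along $\Sigma$ through the commutator $[\alpha\cdot(\nu\times\nabla),\cB]=-2\rmi M\gamma_5\cB$ imported from \cite{ALTR16}. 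However, your bookkeeping of which boundary piece produces which term in \eqref{equation_form} is reversed in a way that would derail the computation if carried out as written. You assert that the intrinsic tangential part of $B(u)$ ``should collapse to $\frac{2m}{\tau}\iint_\Sigma|u_+-u_-|^2\,\dd\Sigma$''; this is incorrect. In the paper's notation the tangential contribution $J_2$ gives, after inserting $u_+=\cR_\tau^+u_-$, using $\gamma_5\cR_\tau^\pm=\cR_\tau^\mp\gamma_5$ from \eqref{eqcommr1}, and applying the commutator identity \eqref{lemma_commutation_diff_B}, precisely the curvature terms $\iint_\Sigma M\big(|u_+|^2-|u_-|^2\big)\,\dd\Sigma$. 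The coupling term $\frac{2m}{\tau}\iint_\Sigma|u_+-u_-|^2\,\dd\Sigma$ comes entirely and only from the $m\beta$ cross contribution $J_1$: one reduces $J_1$ to $m\,\langle u_++u_-,\cB(u_+-u_-)\rangle_{L^2(\Sigma,\CC^4)}$ and invokes the third form of the transmission condition in \eqref{eqtrans2}, $u_++u_-=\frac{2}{\tau}\cB(u_+-u_-)$, together with the unitarity of $\cB$. There is no cancellation between $J_1$ and $J_2$, contrary to your phrase ``up to the intrinsic pieces that cancel''; each boundary piece cleanly produces one family of terms in \eqref{equation_form}, and getting this separation right is the substance of the proof.
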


The proof of Proposition~\ref{theorem_form_B_tau_square} will use a couple of preliminary computations. 
First recall the elementary equality
\begin{equation}
\label{lemma_alpha_nu}
  (\alpha \cdot \nu) \cdot (\alpha \cdot \nabla) - \nu \cdot \nabla I_4
  = \rmi \gamma_5 \alpha \cdot (\nu \times \nabla).
\end{equation}
Other important identities are summarized in the following lemma.
Recall that for two operators $A$ and $B$ one denotes by $[A,B]:=AB-BA$ their commutator.
\begin{lem} \label{lemma_intergation_by_parts}
  Let $\Omega \subset \mathbb{R}^3$ be an open set with compact $C^4$ smooth boundary,
  let $\nu$ be the outward pointing normal vector field on the boundary, let $M$ be the mean curvature on $\partial \Omega$,
  and let $\cB$ be defined by~\eqref{def_M}.
  Then for $u \in H^2(\Omega,\CC^4)$ the following identities hold:
\begin{gather} \label{lemma_commutation_diff_B}
      \big[\alpha \cdot (\nu\times \nabla), \cB \big] u = -2 \rmi M \gamma_5 \cB  u \text{ on } \partial\Omega,\\
   \label{equation_alpha_term}
    \| \alpha \cdot \nabla u \|_{L^2(\Omega,\CC^4)}^2 = \| \nabla u \|^2_{L^2(\Omega,\RR^3\otimes\CC^4)}
      + \big\langle u, \rmi \gamma_5 \alpha \cdot (\nu \times \nabla) u \big\rangle_{L^2(\partial \Omega,\CC^4)},\\
		\label{equation_beta_term1} 2 \Re \langle -\rmi \alpha \cdot \nabla u, \beta u\rangle_{L^2(\Omega,\CC^4)} = \langle -\rmi \alpha \cdot \nu u, \beta u\rangle_{L^2(\partial\Omega,\CC^4)}.
	\end{gather}
In particular,
\begin{multline}
   \label{eqpart1}
		\| (-\rmi \alpha \cdot \nabla + m \beta) u \|_{L^2(\Omega,\CC^4)}^2   = \| \nabla u \|_{L^2(\Omega,\RR^3\otimes\CC^4)}^2 + m^2 \| u \|_{L^2(\Omega,\CC^4)}^2\\
		+ \langle -\rmi \alpha \cdot \nu u, m \beta u \rangle_{L^2(\partial\Omega,\CC^4)} 
		+ \big\langle u, \rmi \gamma_5 \alpha \cdot (\nu \times \nabla) u \big\rangle_{L^2(\partial\Omega,\CC^4)}. 
\end{multline}
\end{lem}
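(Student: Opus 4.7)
The plan is to prove the four identities in turn, noting that \eqref{eqpart1} is a direct consequence of \eqref{equation_alpha_term}, \eqref{equation_beta_term1}, and the unitarity of $\beta$. The only genuinely geometric ingredient is the pointwise commutation relation \eqref{lemma_commutation_diff_B}; the other identities reduce to Green's formula combined with the Clifford relations \eqref{eq_commutation}.

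For \eqref{lemma_commutation_diff_B}, since $\alpha_j$ and $\beta$ are constant matrices, the commutator $[\alpha\cdot(\nu\times\nabla),\cB]$ only sees the tangential derivatives of $\nu$ inside $\cB=-\rmi\beta\,\alpha\cdot\nu$. After extending $\nu$ to a tubular neighborhood of $\partial\Omega$ constant in the normal direction, I would fix $s_0\in\partial\Omega$ and choose an orthonormal basis at $s_0$ with $\nu(s_0)=e_3$ and $e_1,e_2$ principal directions of the Weingarten map, so that $\partial_j\nu(s_0)=\kappa_j e_j$ for $j=1,2$. At $s_0$ one has $\alpha\cdot(\nu\times\nabla)=\alpha_2\partial_1-\alpha_1\partial_2$; moreover, $[\alpha_j,\cB(s_0)]$ vanishes for $j=1,2$ thanks to \eqref{eq_commutation}, so the first-order part of the commutator drops out and only the zeroth-order contribution $-\rmi(\kappa_1+\kappa_2)\beta\alpha_1\alpha_2$ remains. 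The identity $\alpha_1\alpha_2\alpha_3=\rmi\gamma_5$, easily checked from \eqref{def_Dirac_matrices}, combined with \eqref{commutation_gamma_5} then converts this into $-2\rmi M\gamma_5\cB$.

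For \eqref{equation_alpha_term}, the Clifford relation $(\alpha\cdot\nabla)^2=\Delta I_4$ follows at once from \eqref{eq_commutation}. Integrating by parts in $\|\alpha\cdot\nabla u\|_{L^2(\Omega)}^2$ gives
\begin{equation*}
  \|\alpha\cdot\nabla u\|_{L^2(\Omega)}^2 = -\langle u,\Delta u\rangle_{L^2(\Omega)} + \langle u,(\alpha\cdot\nu)(\alpha\cdot\nabla)u\rangle_{L^2(\partial\Omega)},
\end{equation*}
while Green's first identity yields $\|\nabla u\|_{L^2(\Omega)}^2=-\langle u,\Delta u\rangle_{L^2(\Omega)}+\langle u,\nu\cdot\nabla u\rangle_{L^2(\partial\Omega)}$. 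Subtracting and using \eqref{lemma_alpha_nu} to replace $(\alpha\cdot\nu)(\alpha\cdot\nabla)-\nu\cdot\nabla\,I_4$ by $\rmi\gamma_5\alpha\cdot(\nu\times\nabla)$ on $\partial\Omega$ produces \eqref{equation_alpha_term}. For \eqref{equation_beta_term1}, one integration by parts for $\alpha\cdot\nabla$ together with $\alpha_j\beta=-\beta\alpha_j$ and the self-adjointness of $\beta$ gives $\langle\alpha\cdot\nabla u,\beta u\rangle_{L^2(\Omega)}-\overline{\langle\alpha\cdot\nabla u,\beta u\rangle_{L^2(\Omega)}}=\langle u,(\alpha\cdot\nu)\beta u\rangle_{L^2(\partial\Omega)}$; multiplying by $-\rmi$ turns the left-hand side into $2\Re\langle-\rmi\alpha\cdot\nabla u,\beta u\rangle$ and the right-hand side into $\langle-\rmi\alpha\cdot\nu u,\beta u\rangle_{L^2(\partial\Omega)}$. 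Finally, \eqref{eqpart1} is obtained by expanding $\|(-\rmi\alpha\cdot\nabla+m\beta)u\|_{L^2(\Omega)}^2$, using $\beta^2=I_4$, and inserting the two preceding identities.

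The main obstacle is \eqref{lemma_commutation_diff_B}, which carries the full geometric content of the lemma: an algebraic commutator of Dirac matrices and derivatives of $\nu$ has to collapse onto the scalar invariant $M$. Working in principal coordinates is what makes this tractable, since it forces the would-be first-order derivatives of $u$ to vanish pointwise and leaves a zeroth-order piece that factors cleanly through $\kappa_1+\kappa_2=2M$.
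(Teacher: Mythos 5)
Your proof is correct and follows essentially the same route as the paper for \eqref{equation_alpha_term}, \eqref{equation_beta_term1}, and \eqref{eqpart1} (Green's formula plus the Clifford relations, then expanding the square). The one difference is that the paper simply cites \cite[Lemma~A.3]{ALTR16} for the geometric identity \eqref{lemma_commutation_diff_B}, whereas you supply a self-contained pointwise derivation in principal coordinates — a welcome addition, and your computation (the vanishing of the first-order terms since $[\alpha_j,\cB(s_0)]=0$ for $j=1,2$, and the reduction of the zeroth-order piece $-\rmi(\kappa_1+\kappa_2)\beta\alpha_1\alpha_2$ to $-2\rmi M\gamma_5\cB$ via $\alpha_1\alpha_2\alpha_3=\rmi\gamma_5$ and $\gamma_5\beta=-\beta\gamma_5$) checks out.
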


\begin{proof}
The identity \eqref{lemma_commutation_diff_B} was obtained in \cite[Lemma~A.3]{ALTR16}. By applying Green's formula and
	the equality $(\alpha \cdot \nabla)^2 = \Delta$ we obtain
  \begin{equation*}
    \begin{split}
    \|  \alpha \cdot \nabla u \|_{L^2(\Omega,\CC^4)}^2 
        &= \langle \alpha \cdot \nu u, \alpha \cdot \nabla u\rangle_{L^2(\partial \Omega,\CC^4)} 
          -\langle u, (\alpha \cdot \nabla)^2 u \big\rangle_{L^2(\Omega,\CC^4)}\\
		&=\big\langle u, (\alpha \cdot \nu) \cdot (\alpha \cdot \nabla) u\big\rangle_{L^2(\partial \Omega,\CC^4)} 
          -\langle u, \Delta u \rangle_{L^2(\Omega,\CC^4)} \\
      &= \big\langle u, (\alpha \cdot \nu) \cdot (\alpha \cdot \nabla) u\big\rangle_{L^2(\partial \Omega,\CC^4)} 
      - \langle u, \nu \cdot \nabla u\rangle_{L^2(\partial \Omega,\CC^4)}  + \| \nabla u \|_{L^2(\Omega,\RR^3\otimes\CC^4)}^2,
    \end{split}
  \end{equation*}
	and one arrives  at \eqref{equation_alpha_term} with the help of \eqref{lemma_alpha_nu}. Furthermore,
	an integration by parts and the anti-commutation rule \eqref{eq_commutation} show that
  \begin{equation*} 
    \begin{split}
      \langle-\rmi \alpha \cdot \nabla u, \beta u\rangle_{L^2(\Omega,\CC^4)} &= \langle-\rmi \alpha \cdot \nu u, \beta u\rangle_{L^2(\partial\Omega,\CC^4)} 
        - \langle-\rmi u, \alpha \cdot \nabla \beta u\rangle_{L^2(\Omega,\CC^4)} \\
      &= \langle-\rmi \alpha \cdot \nu u, \beta u\rangle_{L^2(\partial\Omega,\CC^4)} - \langle\beta u, -\rmi \alpha \cdot \nabla u\rangle_{L^2(\Omega,\CC^4)},
    \end{split}
  \end{equation*}
  which implies
  \begin{equation*} 
      2 \Re \langle -\rmi \alpha \cdot \nabla u, \beta u\rangle_{L^2(\Omega,\CC^4)}
        = \langle-\rmi \alpha \cdot \nu u, \beta u\rangle_{L^2(\partial\Omega,\CC^4)}
  \end{equation*}
	and proves~\eqref{equation_beta_term1}. Finally,
  \begin{multline*}
          \big\| (-\rmi \alpha \cdot \nabla + m \beta) u \big\|_{L^2(\Omega,\CC^4)}^2 \\
        = \|  \alpha \cdot \nabla u \|_{L^2(\Omega,\CC^4)}^2 + m^2 \| \beta u \|_{L^2(\Omega,\CC^4)}^2
        + 2 \Re \langle-\rmi \alpha \cdot \nabla u, m \beta u\rangle_{L^2(\Omega,\CC^4)},
      \end{multline*}
	and using that $\beta$ is unitary, \eqref{equation_alpha_term} and~\eqref{equation_beta_term1} we arrive at \eqref{eqpart1}.
\end{proof}

We will also use the following assertion, which is a rather standard application
of the elliptic regularity argument, but we prefer to give a proof for the sake of completeness.
\begin{lem}\label{lem:dens_domtilde} For $\tau\notin\{-2,2\}$ the subspace
$\widetilde{\dom}(A_{m,\tau}) := \dom(A_{m,\tau}) \cap H^2(\mathbb{R}^3\setminus\Sigma,\CC^4)$
is dense in $\dom(A_{m,\tau})$ in the $H^1(\mathbb{R}^3\setminus\Sigma,\CC^4)$-norm.
\end{lem}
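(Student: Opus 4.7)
The plan is to approximate a general $u \in \dom(A_{m,\tau})$ by applying the resolvent of $A_{m,\tau}$ to smoothed right-hand sides. Fix $\lambda \in \mathbb{C} \setminus \mathbb{R}$ and set $f := (A_{m,\tau} - \lambda) u \in L^2(\mathbb{R}^3, \mathbb{C}^4)$; choose $f_n \in C_c^\infty(\mathbb{R}^3, \mathbb{C}^4)$ with $f_n \to f$ in $L^2$ and define $u_n := (A_{m,\tau} - \lambda)^{-1} f_n$. By construction $u_n \to u$ in the graph norm of $A_{m,\tau}$. To promote this to $H^1(\mathbb{R}^3 \setminus \Sigma, \mathbb{C}^4)$-convergence, observe that the transmission condition in \eqref{equation_dom_B_tau} is closed in the $H^1$-norm (through the bounded trace map to $H^{1/2}(\Sigma, \mathbb{C}^4)$), so $\dom(A_{m,\tau})$ is a closed subspace of $H^1(\Omega_+, \mathbb{C}^4) \oplus H^1(\Omega_-, \mathbb{C}^4)$; being also complete in the graph norm (since $A_{m,\tau}$ is self-adjoint by Proposition~\ref{theorem_basic_properties}), the closed graph theorem applied to the identity map yields the equivalence of the two norms on $\dom(A_{m,\tau})$, and hence $u_n \to u$ in $H^1(\mathbb{R}^3 \setminus \Sigma, \mathbb{C}^4)$.

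It remains to verify $u_n \in H^2(\mathbb{R}^3 \setminus \Sigma, \mathbb{C}^4)$. For this I use the resolvent formula from Proposition~\ref{theorem_basic_properties}:
\[
u_n = (A_{m,0} - \lambda)^{-1} f_n - \Phi_\lambda \varphi_n, \qquad \varphi_n := (I_4 + \tau \beta \mathcal{C}_\lambda)^{-1} \tau \beta \Phi_{\overline{\lambda}}^* f_n.
\]
Since $A_{m,0}$ is a constant-coefficient elliptic first-order operator and $(A_{m,0} - \lambda)(A_{m,0} + \lambda) = -\Delta + m^2 - \lambda^2$, applying $(A_{m,0} - \lambda)^{-1}$ to $f_n \in C_c^\infty$ yields a function in $H^2(\mathbb{R}^3, \mathbb{C}^4)$. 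For the second summand, the crucial claim is $\varphi_n \in H^{3/2}(\Sigma, \mathbb{C}^4)$: once this is in hand, the traces $(\Phi_\lambda \varphi_n)_\pm|_\Sigma = \mathcal{C}_\lambda \varphi_n \mp \tfrac{\rmi}{2}(\alpha \cdot \nu) \varphi_n$ from \eqref{jump1}--\eqref{jump2} lie in $H^{3/2}(\Sigma, \mathbb{C}^4)$ and $\Phi_\lambda \varphi_n$ satisfies the homogeneous Dirac equation in each $\Omega_\pm$, so elliptic regularity for $-\Delta + m^2 - \lambda^2$ upgrades the $H^1$-regularity from \eqref{equation_gamma_smooth} to $H^2(\Omega_\pm, \mathbb{C}^4)$.

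To obtain $\varphi_n \in H^{3/2}$, I reuse the factorization from the proof of Lemma~\ref{lemma_inverse_M}: multiplying the defining equation for $\varphi_n$ by $(I_4 - \tau \beta \mathcal{C}_\lambda)$ on the left yields
\[
\Big((1 + \tfrac{\tau^2}{4}) I_4 + \mathcal{K}_\lambda\Big) \varphi_n = (I_4 - \tau \beta \mathcal{C}_\lambda) \tau \beta \Phi_{\overline{\lambda}}^* f_n,
\]
where $\mathcal{K}_\lambda$ is assembled from $\mathcal{C}_\lambda^2 - \tfrac{1}{4} I_4$ (smoothing by \eqref{C_lambda_compact}) and the anticommutator of $\beta$ and $\mathcal{C}_\lambda$, itself a constant matrix times the single-layer potential for $-\Delta + m^2 - \lambda^2$; both building blocks gain one Sobolev derivative, so $\mathcal{K}_\lambda$ maps $H^{1/2}(\Sigma, \mathbb{C}^4)$ into $H^{3/2}(\Sigma, \mathbb{C}^4)$. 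Noting that $\Phi_{\overline{\lambda}}^* f_n = ((A_{m,0} - \lambda)^{-1} f_n)|_\Sigma$ is smooth and that $1 + \tfrac{\tau^2}{4} \geq 1$ for every $\tau \in \mathbb{R}$, a single bootstrap step starting from the known $\varphi_n \in H^{1/2}$ delivers $\varphi_n \in H^{3/2}$. The main obstacle is the rigorous verification of this one-order smoothing of $\mathcal{K}_\lambda$ on these Sobolev scales for the $C^4$-surface $\Sigma$, pushing a step beyond \eqref{C_lambda_compact}; this is standard but requires a bit of layer-potential calculus for the Helmholtz equation on smooth surfaces. The restriction $\tau \notin \{-2, 2\}$ does not intervene in the regularity step itself (as $1 + \tfrac{\tau^2}{4} > 0$ for any $\tau$); it is imposed only because for $\tau = \pm 2$ the operator $A_{m,\tau}$ decouples into two MIT bag operators (see Remark~\ref{rem-mit}), for which the density result is standard.
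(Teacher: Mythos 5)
Your route is genuinely different from the paper's. The paper approximates $(u_+,u_-)$ componentwise by $v_\pm^\varepsilon\in H^2(\Omega_\pm,\CC^4)$ in $H^1$, then restores the broken transmission condition by adding to $v_+^\varepsilon$ the term $E\varphi^\varepsilon$, where $E:H^{1/2}(\Sigma)\to H^1(\Omega_+)$ is a trace-lifting operator with the additional property $E(H^{3/2}(\Sigma))\subset H^2(\Omega_+)$ and $\varphi^\varepsilon:=-(\mathcal P_\tau^-)^{-1}(\mathcal P_\tau^- v_+^\varepsilon+\mathcal P_\tau^+ v_-^\varepsilon)$ is the transmission defect; since $v_\pm^\varepsilon\in H^2$ implies $\varphi^\varepsilon\in H^{3/2}(\Sigma)$ and $\varphi^\varepsilon\to 0$ in $H^{1/2}$, one gets the approximant in $\widetilde{\dom}(A_{m,\tau})$ converging in $H^1$. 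You instead mollify the right-hand side of the resolvent equation and bootstrap the regularity of the Birman--Schwinger density $\varphi_n$ through the layer-potential factorization. Your opening observation that the graph norm of $A_{m,\tau}$ and the $H^1(\RR^3\setminus\Sigma)$-norm are equivalent on $\dom(A_{m,\tau})$ (closed graph / open mapping) is correct and a nice simplification. The paper's argument is the more elementary one: it needs nothing beyond the extension operator from \cite{Gris} and the results already established about $\dom(A_{m,\tau})$, whereas yours is conceptually slick but imports new facts about boundary-integral operators.

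That is exactly where the gap lies, and it is more than a formality. Your bootstrap requires two things the paper nowhere establishes nor cites: (i) that $\mathcal C_\lambda^2-\tfrac14 I_4$ and the single-layer anticommutator $\beta\mathcal C_\lambda+\mathcal C_\lambda\beta$ gain a derivative at the $H^{1/2}(\Sigma)\to H^{3/2}(\Sigma)$ level, and (ii) that $\mathcal C_\lambda$ itself is bounded on $H^{3/2}(\Sigma)$ so that the right-hand side $(I_4-\tau\beta\mathcal C_\lambda)\,\tau\beta\,\Phi_{\overline\lambda}^* f_n$ lands in $H^{3/2}$. The cited result \eqref{C_lambda_compact} only gives the $H^{-1/2}\to H^{1/2}$ smoothing of $\widetilde{\mathcal C}_\lambda^2-\tfrac14 I_4$; passing to $H^{1/2}\to H^{3/2}$ (and the analogous scale shift for the single-layer operator and for $\mathcal C_\lambda$ as a bounded map on $H^{3/2}$) is a separate statement whose validity on a $C^4$ surface, while plausible and probably true, requires a genuine argument in pseudodifferential/layer-potential theory that you acknowledge but understate. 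The paper's construction deliberately avoids this extra machinery.

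One small remark on the final sentence: the restriction $\tau\notin\{-2,2\}$ is used in the paper's proof concretely to invert $\mathcal P_\tau^-$ (and $\mathcal P_\tau^+$), and in yours it is implicitly used when passing to $\cR_\tau^\pm$ in the transmission condition; it is not only the MIT-bag decoupling that motivates it, but the algebraic invertibility of $\cP_\tau^\mp$ that both proofs exploit.
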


\begin{proof}
It is well-known, see, e.g., \cite[Thm. 1.5.1.2 and Thm. 2.4.2.5] {Gris}, that
there exists a bounded linear operator $E : H^{\frac{1}{2}}(\Sigma,\CC^4) \longrightarrow H^1(\Omega_+,\CC^4)$
such that for any $\xi \in H^{\frac{1}{2}}(\Sigma,\CC^4)$ one has $(E\xi)|_\Sigma = \xi$
and $E\big(H^{\frac{3}{2}}(\Sigma)\big) \subset H^2(\Omega_+)$.

Let $(u_+,u_-)\in\dom(A_{m,\tau})$. As $H^2(\Omega_\pm,\mathbb{C}^4)$
is dense in $H^1(\Omega_\pm,\mathbb{C}^4)$ with respect to the $H^1$-norm, there exist $v_\pm^\varepsilon \in H^2(\Omega_\pm,\CC^4)$ such that
$\lim_{\varepsilon\rightarrow0}\|v_\pm^\varepsilon - u_\pm\|_{H^1(\Omega_\pm,\CC^4)} = 0$.
Set $w_-^\varepsilon = v_-^\varepsilon$ and $w_+^\varepsilon = v_+^\varepsilon + E\varphi^\varepsilon$, where
$\varphi^\varepsilon$ is given by
$\varphi^\varepsilon = -(\mathcal{P}_\tau^-)^{-1}(\mathcal{P}_\tau^-v_+^\varepsilon + \mathcal{P}_\tau^+v_-^\varepsilon)$.
Note that $\varphi^\varepsilon \in H^{3/2}(\Sigma, \mathbb{C}^4)$ as $v_\pm^\varepsilon\in H^2(\Omega_\pm,\CC^4)$ and $\mathcal{P}_\tau^\pm, (\mathcal{P}_\tau^-)^{-1} \in C^2(\Sigma,\CC^{4\times 4})$. Thus,
we have $w_\pm^\varepsilon \in H^2(\Omega_\pm,\CC^4)$ due to the above properties of $E$.

We claim that $\lim_{\varepsilon \rightarrow 0} \|w_\pm^\varepsilon - u_\pm\|_{H^1(\Omega_\pm,\CC^4)} = 0$. By definition, it is clear that this is true for $w_-^\varepsilon$ so, we focus on $w_+^\varepsilon$. We have
\begin{align*}
	\|w_+^\varepsilon - u_+\|_{H^1(\Omega_+,\CC^4)} &\leq \|v_+^\varepsilon - u_+\|_{H^1(\Omega_+,\CC^4)} + \|E\varphi^\varepsilon\|_{H^1(\Omega_+,\CC^4)}\\
& \leq \|v_+^\varepsilon - u_+\|_{H^1(\Omega_+,\CC^4)} + C\|\varphi^\varepsilon\|_{H^{\frac{1}{2}}(\Sigma,\CC^4)},
\end{align*}
with a constant $C>0$ thanks to the boundedness of $E$. The first term in the right-hand side converges to zero by definition. This is also true for the second term because using the transmission condition $\mathcal{P}^-_\tau u_+ + \mathcal{P}^+_\tau u_- = 0$ we get
\begin{align*}
	\|\varphi^\varepsilon\|_{H^{\frac{1}{2}}(\Sigma,\CC^4)} &= \|(\mathcal{P}_\tau^+)^{-1}\big(\mathcal{P}_\tau^-(v_+^\varepsilon - u_+) + \mathcal{P}_\tau^+(v_-^\varepsilon - u_-)\big)\|_{H^{\frac{1}{2}}(\Sigma,\CC^4)}\\
&\leq K \big(\|v_+^\varepsilon - u_+\|_{H^1(\Omega_+,\CC^4)} + \|v_-^\varepsilon - u_-\|_{H^1(\Omega_-,\CC^4)}\big),
\end{align*}
with a constant $K>0$. Thus, the right-hand side converges to zero by hypothesis and we get $\lim_{\varepsilon \rightarrow 0} \|w_\pm^\varepsilon - u_\pm\|_{H^1(\Omega_\pm,\CC^4)} = 0$.

The only thing left to prove is that $(w_+^\varepsilon,w_-^\varepsilon)\in\widetilde{\dom}(A_{m,\tau})$ which is true if the transmission condition is verified. Indeed, we have
\begin{equation*}
\begin{split}
	\mathcal{P}_\tau^- w_+^\varepsilon + \mathcal{P}_\tau^+ w_-^\varepsilon &= \mathcal{P}_\tau^-\big(v_+^\varepsilon + \varphi^\varepsilon\big) + \mathcal{P}_\tau^+v_-^\varepsilon
=\mathcal{P}_\tau^-v_+^\varepsilon + \mathcal{P}_\tau^+v_-^\varepsilon + \mathcal{P}_\tau^-\varphi^\varepsilon\\
&=\mathcal{P}_\tau^-v_+^\varepsilon + \mathcal{P}_\tau^+v_-^\varepsilon - \mathcal{P}_\tau^-v_+^\varepsilon - \mathcal{P}_\tau^+v_-^\varepsilon=0.
\end{split}
\end{equation*}
Hence, the lemma is proved.
\end{proof}

\begin{proof}[Proof of Proposition \ref{theorem_form_B_tau_square}]
Due to Lemma~\ref{lem:dens_domtilde} it is sufficient to prove the result for the functions
$u\in \widetilde{\dom}(A_{m,\tau})$. Using \eqref{eqpart1} for $\Omega=\Omega_\pm$ we obtain
  \begin{equation*} %\label{equation_quadratic_form}
    \begin{split}
      \| A_{m,\tau} u \|_{L^2(\RR^3,\CC^4)}^2 
          &= \big\|\big (-\rmi \alpha \cdot \nabla + m \beta) u_+ \big\|_{L^2(\Omega_+,\CC^4)}^2 
          + \big\| (-\rmi \alpha \cdot \nabla + m \beta) u_- \big\|_{L^2(\Omega_-,\CC^4)}^2 \\
      &= \| \nabla u \|_{L^2(\RR^3\setminus\Sigma,\RR^3\otimes\CC^4)}^2 + m^2 \| u \|_{L^2(\mathbb{R}^3,\CC^4)}^2+J_1+J_2
		\end{split}
	\end{equation*}
	with
	\begin{align*}
	J_1&= \langle -\rmi \alpha \cdot \nu u_+, m \beta u_+ \rangle_{L^2(\Sigma,\CC^4)} - \langle -\rmi \alpha \cdot \nu u_-, m \beta u_- \rangle_{L^2(\Sigma,\CC^4)},\\
	J_2&= \big\langle u_+, \rmi \gamma_5 \alpha \cdot (\nu \times \nabla) u_+ \big\rangle_{L^2(\Sigma,\CC^4)}
	- \big\langle u_-, \rmi \gamma_5 \alpha \cdot (\nu \times \nabla) u_- \big\rangle_{L^2(\Sigma,\CC^4)}.
	\end{align*}
	To simplify $J_1$ we remark first that 
  \[
       \langle -\rmi \alpha \cdot \nu u_\pm, m\beta u_\pm\rangle_{L^2(\Sigma,\CC^4)}\\
				= m \big\langle  u_\pm, \rmi (\alpha \cdot \nu)\beta u_\pm\big\rangle_{L^2(\Sigma,\CC^4)}=m \langle  u_\pm, \cB u_\pm\rangle_{L^2(\Sigma,\CC^4)},
  \]
	which yields 
	$J_1=m \big( \langle  u_+, \cB u_+\rangle_{L^2(\Sigma,\CC^4)}- \langle  u_-, \cB u_-\rangle_{L^2(\Sigma,\CC^4)}\big)$.
	  By \eqref{eqcommr1} and~\eqref{eqtrans2} we get
  \begin{align*}
        J_1&=m \big( \langle  u_+, \cB u_+\rangle_{L^2(\Sigma,\CC^4)}- \langle  u_-, \cB u_-\rangle_{L^2(\Sigma,\CC^4)}\big)\\
			      &= m\big[ \langle  u_+, \cB u_+\rangle_{L^2(\Sigma,\CC^4)}- \langle  u_-, \cB u_-\rangle_{L^2(\Sigma,\CC^4)}\\
						&\quad \qquad- \langle \cR_\tau^+ u_-, \cB u_-\rangle_{L^2(\Sigma,\CC^4)}
             + \langle u_-, \cB \cR_\tau^+ u_-\rangle_{L^2(\Sigma,\CC^4)}\big] \\
       &= m\big[ \langle  u_+, \cB u_+\rangle_{L^2(\Sigma,\CC^4)}- \langle  u_-, \cB u_-\rangle_{L^2(\Sigma,\CC^4)}\\
						&\qquad \quad- \langle u_+, \cB u_-\rangle_{L^2(\Sigma,\CC^4)}
             + \langle u_-, \cB u_+\rangle_{L^2(\Sigma,\CC^4)}
			\big] \\
      &= m \big\langle u_+ + u_-, \cB (u_+ - u_-) \big\rangle_{L^2(\Sigma,\CC^4)}\\
			&= \dfrac{2m}{\tau } \langle \cB(u_+ - u_-), \cB (u_+ - u_-) \big\rangle_{L^2(\Sigma,\CC^4)}\\
			&= \dfrac{2m}{\tau } \|u_+ - u_-\|_{L^2(\Sigma,\CC^4)}^2.
  \end{align*}
  It remains to analyze the term $J_2$.
  Making again use of the transmission condition \eqref{eqtrans2} and the 
  commutation relation~\eqref{eqcommr1} we obtain
  \begin{equation*}
    \begin{split}
      J_2&=\big\langle \cR_\tau^+ u_-, \rmi \gamma_5 \alpha \cdot (\nu \times \nabla) u_+ \big\rangle_{L^2(\Sigma,\CC^4)}
        -  \big\langle  u_-, \rmi \gamma_5 \alpha \cdot (\nu \times \nabla) \cR_\tau^- u_+ \big\rangle_{L^2(\Sigma,\CC^4)} \\
      &= \big\langle\gamma_5 u_-, \rmi  \big(\cR_\tau^- \alpha \cdot (\nu \times \nabla) - \alpha \cdot (\nu \times \nabla) \cR_\tau^-\big) u_+ \big\rangle_{L^2(\Sigma,\CC^4)} \\
      &= \frac{4\tau}{4-\tau^2} \Big\langle \gamma_5 u_-, \rmi  \big[\alpha \cdot (\nu \times \nabla), \cB\big] u_+ \Big\rangle_{L^2(\Sigma,\CC^4)}.
    \end{split}
  \end{equation*}
  With the help of \eqref{lemma_commutation_diff_B} we arrive at
	\[
	J_2=\frac{4\tau}{4-\tau^2} \langle \gamma_5 u_-, 2M \gamma_5\cB u_+ \rangle_{L^2(\Sigma,\CC^4)}
	=\frac{4\tau}{4-\tau^2} \langle u_-, 2M \cB u_+ \rangle_{L^2(\Sigma,\CC^4)}.
	\]
  Finally, using the expressions of $\cR_\tau^\pm$ and the transmission conditions we conclude
  \begin{align*}
      J_2&=\frac{4\tau}{4-\tau^2} \langle \cB u_-, M u_+ \rangle_{L^2(\Sigma,\CC^4)}+\frac{4\tau}{4-\tau^2} \langle  u_-, M  \cB u_+ \rangle_{L^2(\Sigma,\CC^4)}\\
          &=\langle \cR_\tau^+ u_-, M u_+ \rangle_{L^2(\Sigma,\CC^4)}
					- \langle u_-, M  \cR_\tau^- u_+ \rangle_{L^2(\Sigma,\CC^4)}\\
					&= \langle u_+, M  u_+ \rangle_{L^2(\Sigma,\CC^4)}- \langle  u_-, M  u_- \rangle_{L^2(\Sigma,\CC^4)},
  \end{align*}
	which completes the proof of \eqref{equation_form} for $u\in \widetilde{\dom}(A_{m,\tau})$.
\end{proof}

\subsection{First estimates for the discrete spectrum}

First remark that as a direct consequence of Corollary~\ref{corollary_no_discrete_spectrum_in_gap}
and Theorem~\ref{thm-basic}~(d) we have the following assertion:

\begin{prop}\label{prop3.4}
Let $m \in \mathbb{R}$ be fixed. Then there exists $\tau_m>0$ such that
$A_{m,\tau}$ has no discrete spectrum for $|\tau|<\tau_m$
and $|\tau|>\frac{4}{\tau_m}$.
\end{prop}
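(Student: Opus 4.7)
The plan is to combine the two ingredients flagged just before the statement: Corollary~\ref{corollary_no_discrete_spectrum_in_gap}, which rules out eigenvalues in the spectral gap for small coupling, and the unitary equivalence $A_{m,\tau}\simeq A_{m,4/\tau}$ from Theorem~\ref{thm-basic}~(d), which will automatically transfer the small-$\tau$ result to large $\tau$.

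First, I would invoke Theorem~\ref{thm-basic}~(a), which identifies the essential spectrum as $(-\infty,-|m|]\cup[|m|,+\infty)$. Consequently, any point in the discrete spectrum of $A_{m,\tau}$ must lie in the open gap $(-|m|,|m|)$. Next, Corollary~\ref{corollary_no_discrete_spectrum_in_gap} supplies a threshold $\tau_m>0$ such that $A_{m,\tau}$ has no eigenvalue in $(-|m|,|m|)$ whenever $|\tau|<\tau_m$; together with the previous observation this means $A_{m,\tau}$ has no discrete spectrum at all for $|\tau|<\tau_m$.

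For the large-coupling regime, I would apply Theorem~\ref{thm-basic}~(d): for any $\tau\ne 0$ the operators $A_{m,\tau}$ and $A_{m,4/\tau}$ are unitarily equivalent, and in particular share the same discrete spectrum. If $|\tau|>4/\tau_m$, then $|4/\tau|<\tau_m$, so by the previous paragraph $A_{m,4/\tau}$ has empty discrete spectrum, and hence so does $A_{m,\tau}$.

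There is essentially no technical obstacle here: the proposition is a direct combination of the three preceding facts, and one only has to be careful to keep $\tau\ne\pm 2$ (so that the equivalence in (d) is meaningful) which is consistent with the framework in which $A_{m,\tau}$ has been defined, and to note that the case $\tau=0$ is the free Dirac operator whose spectrum is purely essential by~\eqref{spectrum_A_{m, 0}}, so it is already covered by the $|\tau|<\tau_m$ alternative.
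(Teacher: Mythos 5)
Your proof is correct and follows exactly the route the paper indicates, namely combining Corollary~\ref{corollary_no_discrete_spectrum_in_gap} (no eigenvalues in the gap $(-|m|,|m|)$ for small $|\tau|$, hence no discrete spectrum by Theorem~\ref{thm-basic}~(a)) with the unitary equivalence $A_{m,\tau}\simeq A_{m,4/\tau}$ from Theorem~\ref{thm-basic}~(d). One minor remark: part~(d) is stated for all $\tau\ne0$, including $\tau=\pm2$ where it is trivial, so the caveat about excluding $\pm2$ is unnecessary here.
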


The following assertion holds true due to the unique continuation principle,
see Theorem 3.7 in~\cite{AMV15} and the discussion thereafter to obtain the result in our setting:
\begin{prop}\label{lem-amv}
Assume that $\tau\notin\{-2,2\}$ and that $\Omega_-$ is connected. Then $A_{m,\tau}$ has no eigenvalues in $\RR\setminus[-m,m]$.
\end{prop}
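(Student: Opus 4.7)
The plan is to reduce the question to the unique continuation principle combined with an $L^2$-Rellich-type argument, in the spirit of Theorem~3.7 of \cite{AMV15}, and then use the transmission condition to transfer the vanishing of $u_-$ to $u_+$.

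Let me start from an eigenpair, so let $\lambda\in\RR$ with $|\lambda|>|m|$ and suppose that $u=(u_+,u_-)\in\dom(A_{m,\tau})$ satisfies $A_{m,\tau}u=\lambda u$. Then $u_\pm\in H^1(\Omega_\pm,\CC^4)$ and
\[
(-\rmi\alpha\cdot\nabla+m\beta-\lambda)u_\pm=0 \quad\text{in } \Omega_\pm,
\]
together with the transmission condition $\cP_\tau^- u_+ + \cP_\tau^+ u_-=0$ on $\Sigma$. Applying the first-order operator $(-\rmi\alpha\cdot\nabla+m\beta+\lambda)$ to $u_-$ and using $(\alpha\cdot\nabla)^2=\Delta$ together with the anti-commutation relations \eqref{eq_commutation} shows that each component of $u_-$ is a solution of the Helmholtz equation $(-\Delta-k^2)u_-=0$ in $\Omega_-$ with $k^2:=\lambda^2-m^2>0$, and $u_-\in L^2(\Omega_-,\CC^4)$.

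The main step is now to show $u_-\equiv 0$ in $\Omega_-$. Fix a ball $B_R$ containing $\overline{\Omega_+}$; then each component of $u_-$ is an $L^2$ solution of the Helmholtz equation with real positive frequency $k$ in the exterior $\RR^3\setminus\overline{B_R}$, and by the classical Rellich uniqueness theorem it must vanish outside $B_R$. Since $\Omega_-$ is connected, the weak unique continuation principle for the Helmholtz equation (or equivalently for the Dirac equation, as recalled in \cite[Thm.~3.7]{AMV15}) yields $u_-=0$ throughout $\Omega_-$. In particular, the trace of $u_-$ on $\Sigma$ vanishes, so the transmission condition reduces to $\cP_\tau^- u_+=0$ on $\Sigma$. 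Because $\cB$ is self-adjoint and unitary, its spectrum is $\{-1,+1\}$, hence $\cP_\tau^-=\tfrac{\tau}{2}-\cB$ is pointwise invertible on $\Sigma$ precisely when $\tau\notin\{-2,2\}$, which is the assumption. Therefore $u_+=0$ on $\Sigma$ as well.

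It remains to conclude that $u_+=0$ in $\Omega_+$. The extension $\widetilde{u}$ of $u_+$ by zero to $\RR^3$ lies in $H^1(\RR^3,\CC^4)$ (no jump in the traces on $\Sigma$), and satisfies $(A_{m,0}-\lambda)\widetilde{u}=0$ in the distributional sense on $\RR^3$ because the singular surface term vanishes. Since $\lambda\in\RR\setminus[-|m|,|m|]=\res(A_{m,0})\cap\RR$ by \eqref{spectrum_A_{m, 0}}, we obtain $\widetilde{u}=0$ and hence $u_+=0$. The main obstacle is invoking the correct version of the unique continuation / Rellich result for the Dirac system on the unbounded connected set $\Omega_-$; once this is granted, the rest of the argument is a direct consequence of the invertibility of $\cP_\tau^-$ and the mapping properties already established in Section~\ref{sec-qual}.
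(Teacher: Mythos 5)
Your overall strategy — reduce the eigenvalue equation to the Helmholtz equation, apply Rellich's theorem in the exterior of a large ball, propagate $u_-\equiv 0$ by unique continuation over the connected set $\Omega_-$, use the transmission condition and the invertibility of $\cP_\tau^-$ to get $u_+|_\Sigma=0$, and then kill $u_+$ in $\Omega_+$ — is exactly the argument behind \cite[Thm.~3.7]{AMV15}, which is what the paper cites for this proposition. So you have essentially reconstructed the intended proof.

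There is, however, a genuine error in the last step. You claim that $\lambda\in\RR\setminus[-|m|,|m|]=\res(A_{m,0})\cap\RR$ and conclude $\widetilde u=0$. This is backwards: by \eqref{spectrum_A_{m, 0}} one has $\spec(A_{m,0})=(-\infty,-|m|]\cup[|m|,\infty)$, so $\res(A_{m,0})\cap\RR=(-|m|,|m|)$, and your $\lambda$ lies \emph{in} the spectrum, not in the resolvent set. The conclusion $\widetilde u=0$ is nevertheless salvageable: either observe that $A_{m,0}$ has purely absolutely continuous spectrum and hence no eigenvalues at all, or, more directly, note that $\widetilde u$ is a compactly supported $H^1$-solution of the Helmholtz equation $(-\Delta-(\lambda^2-m^2))\widetilde u=0$ on $\RR^3$ — by elliptic regularity it is real-analytic, and a real-analytic function vanishing on the open set $\Omega_-$ must vanish identically (or invoke weak unique continuation once more, exactly as you already did for $u_-$). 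You should replace the incorrect appeal to $\res(A_{m,0})$ by one of these correct justifications.
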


Now we use Proposition~\ref{theorem_form_B_tau_square} to obtain first estimates on the discrete spectrum of $A_{m,\tau}$.
\begin{prop}\label{thm-disc1} Assume that $\tau\notin\{-2,2\}$, then:
\begin{itemize}
\item[(a)] the discrete spectrum of $A_{m,\tau}$ is finite,
\item[(b)] if $m\tau\ge 0$, then the discrete spectrum of $A_{m,\tau}$ is empty,
\item[(c)] if $m\tau>0$, then $\pm m$ are not eigenvalues of $A_{m,\tau}$,
\item[(d)] if $m\tau>0$ and $\Omega_-$ is connected, then $A_{m,\tau}$ has no eigenvalues.
\end{itemize}
\end{prop}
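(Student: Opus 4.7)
The plan is to exploit the quadratic form identity of Proposition~\ref{theorem_form_B_tau_square} together with the unitary equivalences of Theorem~\ref{thm-basic}. By Theorem~\ref{thm-basic}(e) we may assume $m\geq 0$ throughout; the degenerate cases $m=0$ (essential spectrum equals $\RR$, so no gap) and $\tau=0$ (free Dirac, Theorem~\ref{thm-basic}(a)) are trivial, so I focus on $m>0$ and $\tau\neq 0$.

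For (a), by Theorem~\ref{thm-basic}(a) the discrete spectrum lies in $(-m,m)$, equivalently the spectrum of $A_{m,\tau}^2$ in $[0,m^2)$ is discrete. Starting from Proposition~\ref{theorem_form_B_tau_square}, I bound the sign-indefinite surface term by $\bigl|\iint_\Sigma M(|u_+|^2-|u_-|^2)\,\dd\Sigma\bigr|\leq\|M\|_\infty\|u|_\Sigma\|_{L^2(\Sigma)}^2$ and invoke the trace inequality $\|u|_\Sigma\|_{L^2(\Sigma)}^2\leq\epsilon\|\nabla u\|_{L^2(\RR^3\setminus\Sigma)}^2+C_\epsilon\|u\|^2$ to obtain, for small $\epsilon$, a form bound $\|A_{m,\tau}u\|^2\geq\tfrac12\|\nabla u\|^2+(m^2-C)\|u\|^2$ on $\dom(A_{m,\tau})$. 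The relative form-compactness of the surface trace with respect to $A_{m,0}^2=-\Delta+m^2$ then forces, by the min--max principle, only finitely many eigenvalues of $A_{m,\tau}^2$ to lie strictly below $m^2$.

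For (b), assuming $m\tau\geq 0$ I further reduce to $m>0$ and $\tau>0$. The transmission condition \eqref{eqtrans2} rewritten as $u_+-u_-=\tfrac{\tau}{2}\cB(u_++u_-)$, together with $\cB^2=I_4$ and the self-adjointness of $\cB$, yields the pointwise identities on $\Sigma$
\[
|u_+-u_-|^2=\tfrac{\tau^2}{4}|u_++u_-|^2,\qquad |u_+|^2-|u_-|^2=\tfrac{\tau}{2}\langle u_++u_-,\cB(u_++u_-)\rangle_{\CC^4}.
\]
Substituting into Proposition~\ref{theorem_form_B_tau_square} collapses the surface contributions into a single quadratic form on $u_++u_-$ and gives
\[
\|A_{m,\tau}u\|^2-m^2\|u\|^2=\iiint_{\RR^3\setminus\Sigma}|\nabla u|^2\,\dd x+\frac{\tau}{2}\iint_\Sigma\bigl\langle u_++u_-,(mI_4+M\cB)(u_++u_-)\bigr\rangle\,\dd\Sigma.
\]
Since $\cB$ has eigenvalues $\pm 1$, the matrix $mI_4+M\cB$ has eigenvalues $m\pm M$, hence is pointwise positive semi-definite when $m\geq\|M\|_\infty$; in that case the right-hand side is non-negative and $A_{m,\tau}^2\geq m^2$ in the form sense, forcing emptiness of the discrete spectrum. \textbf{The main difficulty} is reaching the same conclusion when $m<\|M\|_\infty$, where $mI_4+M\cB$ may have negative eigenvalues on part of $\Sigma$. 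Here I would choose a smooth extension $Y$ of $M\nu$ to $\RR^3$ and rewrite $\iint_\Sigma M(|u_+|^2-|u_-|^2)\,\dd\Sigma=\iiint_{\RR^3\setminus\Sigma}\ddiv(|u|^2Y)\,\dd x$ via the divergence theorem, then combine a Young-type inequality with the positive contribution $\tfrac{2m}{\tau}\|u_+-u_-\|^2_{L^2(\Sigma)}$ so as to absorb the negative surface contributions into the bulk gradient term uniformly in $u\in\dom(A_{m,\tau})$, thereby preserving $\|A_{m,\tau}u\|^2\geq m^2\|u\|^2$.

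For (c), suppose $A_{m,\tau}u=\pm m\,u$ under $m\tau>0$ (the two sign cases are related by Theorem~\ref{thm-basic}(b)). Then $\|A_{m,\tau}u\|^2=m^2\|u\|^2$, and equality in the decomposition of (b) forces each non-negative summand to vanish: $\nabla u\equiv 0$ on $\RR^3\setminus\Sigma$ (so $u_\pm$ locally constant), $u_+=u_-$ on $\Sigma$, and $\langle u_++u_-,(mI_4+M\cB)(u_++u_-)\rangle\equiv 0$ on $\Sigma$. The $L^2$-integrability on the unbounded component of $\Omega_-$ forces $u_-\equiv 0$ there, and an analogous argument handles any bounded components; the transmission condition together with the pointwise invertibility of $\cP_\tau^-$ for $\tau\notin\{-2,2\}$ then yields $u_+|_\Sigma\equiv 0$, whence $u_+\equiv 0$ on $\Omega_+$ by local constancy. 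Finally (d) is immediate: Proposition~\ref{lem-amv} excludes eigenvalues outside $[-m,m]$ when $\Omega_-$ is connected and $\tau\neq\pm2$, while (b) and (c) exclude the interval $[-m,m]$; combined, $A_{m,\tau}$ has no eigenvalues at all.
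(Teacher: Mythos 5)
Your part (a) follows a different (trace-inequality + relative form-compactness) route than the paper, which instead uses a Dirichlet--Neumann type bracketing: it enlarges the form domain by dropping continuity across $\partial\Omega$ for a large ball $\Omega\supset\overline{\Omega_+}$, getting a direct sum of a compact-resolvent piece on $\Omega$ and a piece on $\Omega^c$ that is $\ge m^2$. Both routes are workable, though yours is only sketched.

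The real problem is in (b), and it propagates to (c). After your (correct) algebra collapsing the two surface terms into
\[
\frac{\tau}{2}\iint_\Sigma\big\langle u_++u_-,(mI_4+M\cB)(u_++u_-)\big\rangle\,\dd\Sigma,
\]
you recognize that $mI_4+M\cB$ can have negative eigenvalues where $|M|>m$, and you propose to absorb this via the divergence theorem. That absorption does not close the gap: writing $\iint_\Sigma M(|u_+|^2-|u_-|^2)=\iiint\ddiv(|u|^2Y)\,\dd x$ and applying Young's inequality gives a lower bound of the form $-\epsilon\|\nabla u\|^2-C_\epsilon\|u\|^2$ with $C_\epsilon>0$, which only yields $\|A_{m,\tau}u\|^2\ge (m^2-C_\epsilon)\|u\|^2$ — a strictly weaker statement than $A_{m,\tau}^2\ge m^2$. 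The positive term $\frac{2m}{\tau}\|u_+-u_-\|^2_{L^2(\Sigma)}$ is a surface quantity and does not compensate a bulk $L^2$ error. What you are missing is the observation the paper makes: since $\dom(A_{m,\tau})=\dom(A_{0,\tau})$ and Proposition~\ref{theorem_form_B_tau_square} applied with $m=0$ reads $\|A_{0,\tau}u\|^2=\|\nabla u\|^2+\iint_\Sigma M(|u_+|^2-|u_-|^2)\,\dd\Sigma$, one can regroup
\[
\|A_{m,\tau}u\|^2=\|A_{0,\tau}u\|^2+m^2\|u\|^2+\frac{2m}{\tau}\|u_+-u_-\|^2_{L^2(\Sigma)}\ge m^2\|u\|^2,
\]
so the sign-indefinite curvature term is absorbed \emph{exactly} into the manifestly non-negative quantity $\|A_{0,\tau}u\|^2$, with no restriction like $m\ge\|M\|_\infty$. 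This is what makes (b) work for all $m\tau\ge0$.

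For (c), your statement ``equality forces each non-negative summand to vanish'' is not justified because, as you yourself note, the summand $\frac{\tau}{2}\langle u_++u_-,(mI_4+M\cB)(u_++u_-)\rangle$ is not pointwise non-negative. The correct sequence (the paper's) is: $0=\|A_{0,\tau}u\|^2+\frac{2m}{\tau}\|u_+-u_-\|^2_{L^2(\Sigma)}$ with both terms $\ge0$, forcing $u_+=u_-$ on $\Sigma$; combined with $\cP^-_\tau u_++\cP^+_\tau u_-=0$ this gives $\tau u_\pm|_\Sigma=0$, i.e.\ $u_\pm|_\Sigma=0$; plugging back into \eqref{equation_form} (all boundary terms drop) gives $\|\nabla u\|^2=0$, so $u_\pm$ are constant and hence zero. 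Part (d) is fine.
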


\begin{proof}
(a) It is sufficient to show that the discrete spectrum of $A:=A_{m,\tau}^2$ is finite,
i.e. that $A$ has at most finitely many eigenvalues in $[0,m^2)$.
Recall that $A$ is the self-adjoint operator associated with the sesquilinear form
\[
a(u,u)=\|A_{m,\tau} u\|^2_{L^2(\RR^3,\CC^4)}, \quad u\in \dom(A_{m,\tau}).
\]
Let $\Omega\subset\RR^3$ be a large ball containing $\overline{\Omega_+}$ and set $\Omega^c = \RR^3\setminus\overline{\Omega}$.
Using the natural identification
\[
L^2(\RR^3,\CC^4)\simeq L^2(\Omega_+,\CC^4)\oplus L^2(\Omega_-\cap \Omega,\CC^4)\oplus L^2(\Omega^c,\CC^4),
\quad u\simeq (u_+,u_-,u_c),
\]
consider the sesquilinear form
\begin{multline*}
b(u,u)=\iiint_{\RR^3\setminus (\Sigma \cup\partial \Omega)} \big| \nabla u \big|^2\, \dd x + m^2 \iiint_{\RR^3} | u|^2\, \dd x\\
          + \frac{2 m}{\tau} \iint_\Sigma | u_+ - u_-|^2 \dd\Sigma + \iint_\Sigma M |u_+|^2 \dd\Sigma 
          - \iint_\Sigma M |u_-|^2 \dd\Sigma
\end{multline*}
defined for $u$ satisfying
\begin{gather*}
u_+\in H^1(\Omega_+,\CC^4), \quad u_-\in H^1(\Omega_-\cap \Omega,\CC^4), \quad
u_c\in H^1(\Omega^c,\CC^4),\\
\cP_\tau^- u_+ + \cP_\tau^+ u_-=0 \text{ on } \Sigma.
\end{gather*}
Then $b$ is closed, lower semibounded, and, moreover, it is an extension of the form~$a$.
Let $B$ be the self-adjoint operator in $L^2(\RR^3,\CC^4)$ associated with $b$, then due to the min-max principle
one has $\varepsilon_n(A)\ge \varepsilon_n(B)$ for all $n$. Therefore, it is sufficient to show that $B$
has at most finitely many eigenvalues in $(-\infty,m^2)$.

One easily remarks that $B=B_0\oplus B_c$, where $B_0$ is the self-adjoint operator in $L^2(\Omega,\CC^4)$
generated by the sesquilinear form
\begin{multline*}
b_0(u,u)=\iiint_{\Omega\setminus \Sigma } \big| \nabla u \big|^2\, \dd x + m^2 \iiint_{\Omega} | u|^2\, \dd x\\
          + \frac{2 m}{\tau} \iint_\Sigma | u_+ - u_-|^2 \dd\Sigma + \iint_\Sigma M |u_+|^2 \dd\Sigma 
          - \iint_\Sigma M |u_-|^2 \dd\Sigma
\end{multline*}
with
\begin{multline*}
\dom(b_0)=\big\{ u=(u_+,u_-):
u_+\in H^1(\Omega_+,\CC^4), \quad u_-\in H^1(\Omega_-\cap \Omega,\CC^4),\\
\quad \cP_\tau^- u_+ + \cP_\tau^+ u_-=0 \text{ on } \Sigma
\big\},
\end{multline*}
and $B_c$ is the self-adjoint operator in $L^2(\Omega^c,\CC^4)$ given by the sesquilinear form
\[
b_c(u_c,u_c)=\iiint_{\Omega^c } \big| \nabla u_c \big|^2\, \dd x + m^2 \iiint_{\Omega^c} | u_c|^2\, \dd x,\quad
\dom(b_c)=H^1(\Omega^c,\CC^4).
\]
One has $B_c\ge m^2$ and, therefore, the number of eigenvalues of $B$ in $(-\infty,m^2)$
is the same as that of $B_0$. On the other hand, the domain of $b_0$
is compactly embedded in $L^2(\Omega,\CC^4)$ and hence, $B_0$ has compact resolvent. As $B_0$
is lower semibounded, its eigenvalues form a sequence converging to $+\infty$
and there are at most finitely many eigenvalues in $(-\infty,m^2)$.

(b) It is sufficient to show that $A_{m,\tau}^2$ has no discrete spectrum.
As the essential spectrum of $A_{m,\tau}^2$ is $[m^2,+\infty)$,
it is sufficient to show that $A_{m,\tau}^2\ge m^2$.
The case $\tau=0$ is obvious and corresponds to the free Dirac operator, cf. Section~\ref{section_free_Operator}, 
so we may assume that $\tau\ne 0$ and $m\tau\ge 0$. By Proposition~\ref{theorem_form_B_tau_square} 
we have for any $u\in \dom (A^2_{m,\tau})\subset \dom(A_{m,\tau})$
\begin{multline}
   \label{eq-amt}
\langle u, A^2_{m,\tau} u\rangle_{L^2(\RR^3,\CC^4)}=\|A_{m,\tau}u\|^2_{L^2(\RR^3,\CC^4)}\\
=\|A_{0,\tau}u\|^2_{L^2(\RR^3,\CC^4)}
+m^2\|u\|^2_{L^2(\RR^3,\CC^4)} +\frac{2 m}{\tau} \| u_+ - u_-\|^2_{L^2(\Sigma,\CC^4)}
\ge m^2\|u\|^2_{L^2(\RR^3,\CC^4)}
\end{multline}
and thus, the claim is also true for $\tau \neq 0$.

(c) It is sufficient to show that $\ker(A^2_{m,\tau}-m^2)=\{0\}$. Let $u\in \ker(A^2_{m,\tau}-m^2)$,
then similar to \eqref{eq-amt}  one has
\begin{align*}
0&=\langle u, (A^2_{m,\tau}-m^2) u\rangle_{L^2(\RR^3,\CC^4)}\\
&=\|A_{0,\tau}u\|^2_{L^2(\RR^3,\CC^4)} +\frac{2 m}{\tau} \| u_+ - u_-\|^2_{L^2(\Sigma,\CC^4)}
\ge \frac{2 m}{\tau} \| u_+ - u_-\|^2_{L^2(\Sigma,\CC^4)}\ge 0
\end{align*}
implying $u_+=u_-$ on $\Sigma$. Together with the condition $\cP_\tau^- u_+ +\cP_\tau^+ u_-= 0$
this implies that $u_+=u_-=0$ on $\Sigma$.
Using again Proposition~\ref{theorem_form_B_tau_square} we arrive at
\[
0=\langle u, (A^2_{m,\tau}-m^2) u\rangle_{L^2(\RR^3,\CC^4)}= \iiint_{\Omega_+} |\nabla u_+|^2\dd x+\iiint_{\Omega_-} |\nabla u_-|^2\dd x
\]
and deduce that $u_\pm$ are constant on each connected component of $\Omega_\pm$. As $u_\pm=0$ on $\Sigma=\partial\Omega_\pm$,
the functions $u_\pm$ vanish identically.

(d) Follows from the points (b) and (c) and Proposition~\ref{lem-amv}.
\end{proof}

\section{Discrete spectrum in the large mass limit}\label{sec:dis-spec}

\subsection{Effective operator on the shell}\label{sec-eff}

By Theorem~\ref{thm-disc1} $A_{m, \tau}$ can only have discrete spectrum when $\tau$ 
and $m$ have opposite signs. 
As seen in Theorem~\ref{thm-basic}~(e), the operators $A_{m,-\tau}$ and $A_{-m,\tau}$ are unitarily equivalent, hence
in this section we assume without loss of generality that
\[
\tau<0 \text{ with } \tau\ne -2 \text{ is fixed}
\]
and we are going to study the behavior of the discrete spectrum as $m\to +\infty$.

In order to state the main result, we need to introduce an effective operator on $\Sigma$, which appears
to be a Schr\"odinger operator with an external Yang-Mills potential, cf. \cite[Section 69]{sred}.
Namely, consider the (matrix-valued) $1$-form $\omega$ on $\Sigma$ given by $\omega=\sigma\cdot (\nu\times\dd \nu)$,
i.e. by the local expression %KP Kordyukov said that this writing with \dd\nu is OK
\begin{equation}
    \label{eq-omega0}
\omega=\omega_1\dd s_1+\omega_2\dd s_2 \in T^*\Sigma\otimes \bfB(\CC^2), \quad
\omega_j=\sigma\cdot(\nu\times \partial_j \nu).
\end{equation}
For a parameter (coupling constant) $\theta\in \RR$, denote
\[
\Lambda(\theta)=(\dd + \rmi \theta \omega)^*(\dd + \rmi \theta \omega)
\]
the associated Bochner Laplacian in $L^2(\Sigma,\CC^2)$.
Recall that by definition this operator is given by the local expression
\[
\Lambda(\theta)=
-\dfrac{1}{\sqrt{\det g}}\sum_{j,k} (\partial_j + \rmi \theta\omega_j)g^{jk} 
\sqrt{\det g} (\partial_k + \rmi \theta\omega_k), \quad \dom\big(\Lambda(\theta)\big)=H^2(\Sigma,\CC^2),
\]
where $(g_{jk})$ is the metric tensor on $\Sigma$, $(g^{jk}):=(g_{jk})^{-1}$,
and it is the unique self-adjoint operator associated with the closed sesqulinear form $\lambda_\theta$ given by
\[
\lambda_\theta(u,u)= \iint_\Sigma \sum_{j,k}g^{jk} \big\langle \partial_j u+\rmi \theta \omega_j u, 
  \partial_k u+\rmi \theta \omega_k u\big\rangle_{\CC^2} \dd\Sigma,
  \quad  \dom(\lambda_\theta)=H^1(\Sigma,\CC^2).
\]
Finally, consider the Schr\"odinger operator with an additional (bounded) scalar potential induced by curvatures given by
\[
\Upsilon_\tau=\Lambda\Big(\frac{4}{\tau^2+4}\Big) - \Big(\dfrac{\tau^2-4}{\tau^2+4}\Big)^2 M^2 + \dfrac{\tau^4+16}{(\tau^2+4)^2} K,
\]
which acts on $L^2(\Sigma,\CC^2)$ as well.

We will often use the shorthand
\begin{equation}\label{eq-aaa}
\mu=\dfrac{4|\tau|}{\tau^2+4}\in (0,1).
\end{equation}

The aim of the present section is prove the following main result:
\begin{thm}\label{thm-ev1}
Assume that $\delta\equiv \delta(m)>0$ is chosen in such a way that
\begin{equation}
    \label{eq-mdelta}
 \delta\to 0 \text { and }
m\delta\to +\infty
\text{ for } m\to +\infty.
\end{equation}
Then there exist constants $b>0$, $c>0$ and $m_0>0$ such that for all $m>m_0$ and $j\in\big\{1,\dots,\cN(A^2_{m,\tau},m^2)\big\}$
one has
\begin{multline*}
\Big( \dfrac{\tau^2-4}{\tau^2+4}\Big)^2 m^2 + (1-b\delta)E_j(\Upsilon_\tau\oplus\Upsilon_\tau)- c(\delta+m^2e^{-2\mu m\delta})
\le E_j(A_{m,\tau}^2)\\
\le\Big( \dfrac{\tau^2-4}{\tau^2+4}\Big)^2 m^2 + (1+b\delta)E_j(\Upsilon_\tau\oplus\Upsilon_\tau)+ c(\delta+m^2e^{-2\mu m\delta}).
\end{multline*}
\end{thm}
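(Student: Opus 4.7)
\smallskip

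\textbf{Proof plan for Theorem \ref{thm-ev1}.}

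The plan is to apply the min-max principle to the non-negative quadratic form
$a_{m,\tau}(u) = \|A_{m,\tau}u\|^2_{L^2(\RR^3,\CC^4)}$ associated with $A_{m,\tau}^2$ whose explicit shape is given by Proposition~\ref{theorem_form_B_tau_square}. The main idea is to localize to a tubular neighborhood $\Omega_\delta = \{x\in\RR^3 : \dist(x,\Sigma)<\delta\}$ and to perform a separation of the fast transverse variable from the slow tangential variable. The width $\delta$ will be coupled to $m$ so that $m\delta \to \infty$ (forcing the transverse eigenfunction to be essentially localized within $\Omega_\delta$) while $\delta \to 0$ (so that geometric expansions in the normal coordinate are accurate). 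The exponent $\mu = \frac{4|\tau|}{\tau^2+4} \in (0,1)$ arises from the transverse one-dimensional eigenvalue problem: one computes directly that $1-\mu^2 = \bigl(\frac{\tau^2-4}{\tau^2+4}\bigr)^2$, which recovers the announced leading term.

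First I would set up Fermi coordinates $\Phi: \Sigma\times(-\delta,\delta)\to\Omega_\delta$, $\Phi(s,t)=s+t\nu(s)$, for which the metric splits as $dt^2 + g_\Sigma(s,t)$ with $g_\Sigma(s,t) = (I-tS)^* g_\Sigma(s,0)(I-tS)$, and the volume element carries the Jacobian $J(s,t)=1-2Mt+Kt^2$. In these coordinates, the transverse model problem on $(-\delta,\delta)$ (with Dirichlet condition at $t=\pm\delta$, the interface condition $\cP_\tau^- u_+ + \cP_\tau^+ u_-=0$ at $t=0$ coming from the domain of $A_{m,\tau}$, and the weight $\frac{2m}{\tau}|u_+-u_-|^2$ from the form) admits a two-dimensional ground eigenspace built from the matrix profile $\chi_m(t) = e^{-\mu m|t|}$ multiplied by vectors in a rank-two eigenspace of $\cB$; with the sign convention $\tau<0$ one checks that the jump term is attractive, and one gets the ground-state energy $(1-\mu^2)m^2 = \bigl(\frac{\tau^2-4}{\tau^2+4}\bigr)^2 m^2 + O(m^2 e^{-2\mu m\delta})$ together with a spectral gap of order $m^2$ above it.

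For the upper bound I would take orthonormal eigenfunctions $\phi_1,\dots,\phi_j$ of $\Upsilon_\tau\oplus\Upsilon_\tau$ in $L^2(\Sigma,\CC^4)$, extend them into $\Omega_\delta$ by the Ansatz $u_k(s,t) = \chi_m(t)\,\Pi(s)\phi_k(s)$ with $\Pi$ an appropriate matrix ensuring that the transmission condition at $t=0$ is satisfied, and extend by $0$ outside (a smooth cut-off near $t=\pm\delta$ can be inserted at the cost of $O(m^2e^{-2\mu m\delta})$). Substituting this Ansatz into the form \eqref{equation_form} and integrating in $t$ using the explicit transverse computations produces, in the variable $s$, precisely the Bochner-Laplacian $\Lambda(4/(\tau^2+4))$ plus the potential $-(\tau^2-4)^2 M^2/(\tau^2+4)^2 + (\tau^4+16)K/(\tau^2+4)^2$, up to factors $(1+b\delta)$ coming from the expansion of $J$ and from the $t$-dependence of $g_\Sigma$. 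The resulting Rayleigh quotient yields the desired upper bound by the min-max principle.

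For the lower bound I would use an IMS-type partition of unity $\chi_{\mathrm{in}}^2+\chi_{\mathrm{out}}^2=1$ with $\chi_{\mathrm{in}}$ supported in $\Omega_{2\delta}$. On the exterior the form is bounded from below by $(m^2-C\delta^{-2})\|u\|^2$, and the $\delta^{-2}$-loss from the localization gradient is absorbed by raising $\delta$ slowly enough (the hypothesis $m\delta\to\infty$ allows $\delta^{-2}\ll m^2$). On $\Omega_{2\delta}$, passing to Fermi coordinates and writing any $u$ as $u(s,t)=\chi_m(t)\varphi(s)+\psi(s,t)$ with $\psi\perp \chi_m$ pointwise in $s$, the spectral gap of the transverse problem implies that the contribution of $\psi$ to the form exceeds that of $\varphi$ by a term of order $m\|\psi\|^2$, so dropping $\psi$ loses only a multiplicative factor $(1-b\delta)$ in front of the tangential operator once one re-absorbs the cross-terms. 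The hardest step is the identification of the tangential operator acting on $\varphi$: one must show that the boundary contributions $\frac{2m}{\tau}|u_+-u_-|^2 + M(|u_+|^2-|u_-|^2)$ combine with the tangential Dirichlet integral (via the commutator identity of Lemma~\ref{lemma_intergation_by_parts} applied to the matrix $\cB$, which produces the magnetic form $\omega=\sigma\cdot(\nu\times d\nu)$) so as to reconstruct the quadratic form of $\Upsilon_\tau\oplus\Upsilon_\tau$. Handling this algebraic identification carefully, while tracking all $O(\delta)$ geometric errors and the exponentially small tails $O(m^2e^{-2\mu m\delta})$, is the technical heart of the argument and the main obstacle; once it is done, the min-max principle applied to both bounds yields Theorem~\ref{thm-ev1}.
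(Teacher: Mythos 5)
Your plan follows the same strategy as the paper: reduction to a tubular neighborhood via Fermi coordinates (the paper uses Dirichlet/Neumann-type bracketing in Lemma~\ref{lem-qnd} where you propose an IMS partition, but these are interchangeable here since $\delta^{-2}\ll m^2$), a one-dimensional transverse model on $(-\delta,\delta)$ whose ground energy is $-\mu^2m^2+\cO(m^2e^{-2\mu m\delta})$ with a spectral gap of order $m^2$, tensor-product test functions for the upper bound, and projection onto the transverse ground state combined with commutator estimates of $[\nabla_s,\Pi]$ for the lower bound. The algebraic identification of the effective operator, which you correctly single out as the ``technical heart,'' is what the paper packages into Proposition~\ref{prop45}: the effective form is first recognized as $\cL^\tau_0 + K - M^2$ acting in the constrained space $\cH_\tau$ of \eqref{def_Hilbert_space_H}, and then a pointwise unitary $U$ built from $\cP^\pm_\tau$ maps $L^2(\Sigma,\CC^4)$ onto $\cH_\tau$ and conjugates $\cL^\tau$ into $\Upsilon_\tau\oplus\Upsilon_\tau$; this is exactly your ``appropriate matrix $\Pi(s)$'' step made explicit.

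One error you should correct: the transverse model on $(-\delta,\delta)$ has a \emph{four}-dimensional ground eigenspace, not a two-dimensional one. The ground eigenfunctions are $v(t)=v_\pm\psi_m(|t|)$ for $\pm t>0$ with the sole linear constraint $\cP^-_\tau(s)v_+ + \cP^+_\tau(s)v_-=0$, i.e. $v_+=\cR^+_\tau v_-$ with $v_-\in\CC^4$ free (see Lemmas~\ref{lem1dd} and \ref{lem1dn}); the ground state is not restricted to a rank-two eigenspace of $\cB$ --- that only happens at the decoupled MIT-bag values $\tau=\pm2$, which are excluded here. This dimension count matters: the effective operator $\Upsilon_\tau\oplus\Upsilon_\tau$ lives in $L^2(\Sigma,\CC^4)$, and if one only spanned a two-dimensional transverse fibre the upper bound would fail to produce $j$-dimensional test subspaces matching the eigenfunctions of $\Upsilon_\tau\oplus\Upsilon_\tau$, and the projector in the lower bound would have the wrong range, leading to a factor-of-two miscount of eigenvalues.
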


Let us present first some important consequences.

\begin{cor}\label{cor11} For any fixed $j\in\NN$ there holds
\[
E_j(A_{m,\tau}^2)= \Big(\dfrac{\tau^2-4}{\tau^2+4}\Big)^2 m^2 + E_j(\Upsilon_\tau\oplus\Upsilon_\tau)+\cO\Big(\dfrac{\log m}{m}\Big)
\text{ as } m\to+\infty.
\]
\end{cor}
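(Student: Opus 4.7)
My plan is to deduce this corollary directly from Theorem~\ref{thm-ev1} by making an explicit choice of the parameter $\delta=\delta(m)$ that balances the two error terms $\delta$ and $m^2 e^{-2\mu m\delta}$ appearing in the two-sided bound.

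\textbf{Choice of $\delta$.} I would set $\delta(m):=C\,\dfrac{\log m}{m}$ for a constant $C>0$ to be fixed momentarily. Clearly $\delta(m)\to 0$ and $m\delta(m)=C\log m\to+\infty$ as $m\to+\infty$, so the hypotheses of Theorem~\ref{thm-ev1} are met. With this choice, $e^{-2\mu m\delta}=m^{-2\mu C}$, hence
\[
m^2 e^{-2\mu m\delta}=m^{2-2\mu C}.
\]
Choosing $C$ large enough so that $2-2\mu C\le -1$ (recall $\mu\in(0,1)$ from \eqref{eq-aaa} is a fixed positive constant depending only on $\tau$), this exponential term is bounded by $m^{-1}$, and in particular by a constant multiple of $\log m/m$. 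The remaining error $c\delta$ together with the prefactor term $b\delta\,E_j(\Upsilon_\tau\oplus\Upsilon_\tau)$ are both of order $\log m/m$, since $E_j(\Upsilon_\tau\oplus\Upsilon_\tau)$ is a fixed constant depending only on $j$ and $\tau$.

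\textbf{Applicability to the $j$-th eigenvalue.} Before plugging in, I must verify that for each fixed $j$ and all sufficiently large $m$ the index $j$ lies in $\{1,\dots,\cN(A_{m,\tau}^2,m^2)\}$, as required in the statement of Theorem~\ref{thm-ev1}. Since $\big(\tfrac{\tau^2-4}{\tau^2+4}\big)^2<1$, the upper bound of the theorem (applied say to some fixed small $\delta$) shows that for large $m$ there are at least $j$ eigenvalues of $A_{m,\tau}^2$ strictly below $m^2=\inf\spec_\ess(A_{m,\tau}^2)$, so the hypothesis is satisfied for $m$ large enough.

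\textbf{Assembling the asymptotics.} Once the hypothesis is checked, Theorem~\ref{thm-ev1} gives
\[
\Big|E_j(A_{m,\tau}^2)-\Big(\tfrac{\tau^2-4}{\tau^2+4}\Big)^2 m^2 - E_j(\Upsilon_\tau\oplus\Upsilon_\tau)\Big|
\le b\delta\,|E_j(\Upsilon_\tau\oplus\Upsilon_\tau)|+c(\delta+m^2e^{-2\mu m\delta}),
\]
and the right-hand side is $\cO(\log m/m)$ by the computation above. This is exactly the claim of the corollary. I do not anticipate any serious obstacle; the whole proof amounts to a judicious calibration of $\delta$ and a brief verification that $j$ is in the admissible range, so a compact paragraph-level argument should suffice.
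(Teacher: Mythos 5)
Your proof is correct and follows the same strategy as the paper: the paper's one-sentence proof says "use Theorem~\ref{thm-ev1} with $\delta = k m^{-1}\log m$ and a sufficiently large $k>0$," which is exactly your calibration. You spell out the arithmetic (choosing $C$ so that $2-2\mu C \le -1$) and you also raise the question of whether $j\in\{1,\dots,\cN(A^2_{m,\tau},m^2)\}$, which the paper leaves implicit. One small caveat there: invoking the upper bound of Theorem~\ref{thm-ev1} to show $j$ lies in the admissible range is slightly circular as stated, since that bound is itself conditioned on $j$ being in that range; the clean way to close the loop is to observe that the min--max (Rayleigh quotient) chain in Lemma~\ref{lem-qnd} and Lemma~\ref{lem-upb} actually controls the Rayleigh quotients $\varepsilon_j(A^2_{m,\tau})$ unconditionally, forcing $\varepsilon_j(A^2_{m,\tau}) < m^2$ for $m$ large and hence $j\le\cN(A^2_{m,\tau},m^2)$. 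This is a cosmetic fix, and the paper glosses over the same point.
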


\begin{proof}
As the $j$-th eigenvalue of $\Upsilon_\tau\oplus\Upsilon_\tau$ does not depend on $m$, it is sufficient to use Theorem~\ref{thm-ev1} with $\delta= k m^{-1} \log m$
and a sufficiently large $k>0$.
\end{proof}

\begin{cor}\label{cor32} Denote the eigenvalues of $A_{m,\tau}$ by $\pm \mu_j(m)$ with $\mu_j(m)\ge 0$ enumerated in the non-decreasing order
according to the multiplicities, then
for any fixed $j\in\NN$ there holds
\[
\mu_j(m)=\dfrac{|\tau^2-4|}{\tau^2+4}\, m+ \dfrac{\tau^2+4}{|\tau^2-4|}\, \dfrac{E_{j}(\Upsilon_\tau)}{2 m} + \cO\Big(\dfrac{\log m}{m^2}\Big)
\text{ as } m\to+\infty.
\]
\end{cor}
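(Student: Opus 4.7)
The plan is to deduce the expansion for $\mu_j(m)$ directly from the asymptotic identity for $E_j(A^2_{m,\tau})$ given in Corollary~\ref{cor11}, after reconciling the enumerations of the spectra of $A_{m,\tau}^2$ and of $\Upsilon_\tau\oplus\Upsilon_\tau$ with those of $A_{m,\tau}$ and of $\Upsilon_\tau$.

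First I would translate between the eigenvalue enumerations. The discrete spectrum of $A_{m,\tau}$ lies inside $(-|m|,|m|)$; by Theorem~\ref{thm-basic}(b)--(c) it is symmetric about $0$ and each eigenvalue has even multiplicity. With the convention $\pm\mu_j(m)$, each value $+\mu_j(m)$ and its partner $-\mu_j(m)$ contribute eigenspaces of the same dimension to the eigenvalue $\mu_j(m)^2$ of $A^2_{m,\tau}$, so enumerating the eigenvalues of $A^2_{m,\tau}$ below $m^2$ in non-decreasing order gives $E_{2j-1}(A^2_{m,\tau})=E_{2j}(A^2_{m,\tau})=\mu_j(m)^2$. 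An identical bookkeeping for the direct sum yields $E_{2j-1}(\Upsilon_\tau\oplus\Upsilon_\tau)=E_{2j}(\Upsilon_\tau\oplus\Upsilon_\tau)=E_j(\Upsilon_\tau)$.

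Second, applying Corollary~\ref{cor11} with the index $2j$ and substituting the above identities gives, for each fixed $j$ and $m$ large,
\[
\mu_j(m)^2 = \Big(\dfrac{\tau^2-4}{\tau^2+4}\Big)^2 m^2 + E_j(\Upsilon_\tau) + \cO\Big(\dfrac{\log m}{m}\Big).
\]
Setting $A:=|\tau^2-4|/(\tau^2+4)>0$, the right-hand side is $A^2m^2+\cO(1)$, so factoring out $Am$ and applying $\sqrt{1+x}=1+\tfrac{x}{2}+\cO(x^2)$ with $x=\cO(m^{-2}\log m)$ yields
\[
\mu_j(m)=Am+\dfrac{E_j(\Upsilon_\tau)}{2Am}+\cO\Big(\dfrac{\log m}{m^2}\Big),
\]
and substituting back the value of $A$ produces the announced formula.

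The only delicate point is the multiplicity bookkeeping in the first step; once Corollary~\ref{cor11} is in hand, the remainder of the proof is the elementary Taylor expansion of the square root, whose error term is controlled precisely because the leading contribution $Am\to+\infty$ dominates the $\cO(\log m/m)$ correction inside the square root. I do not expect any genuine obstacle beyond ensuring that, for fixed $j$, the index $2j$ eventually lies within the admissible range of Theorem~\ref{thm-ev1}, which follows from the Weyl-type growth of $\mathcal{N}(A^2_{m,\tau},m^2)\sim \mathrm{const}\cdot m^2$ stated in point~(E) of the introduction.
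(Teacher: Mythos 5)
Your proposal is correct and follows essentially the same route as the paper: identify $\mu_j(m)^2=E_{2j}(A_{m,\tau}^2)$ via the even multiplicities from Theorem~\ref{thm-basic}(c), invoke Corollary~\ref{cor11} together with $E_{2j}(\Upsilon_\tau\oplus\Upsilon_\tau)=E_j(\Upsilon_\tau)$, and Taylor-expand the square root. You merely spell out the multiplicity bookkeeping and the admissibility of the index $2j$ more explicitly than the paper's two-line argument.
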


\begin{proof}
One has $\mu_j(m)^2=E_{2j}(A_{m,\tau}^2)$ due to the degeneracy, see Theorem \ref{thm-basic}(c).
Now it is sufficient to apply Taylor expansion to $\sqrt{E_{2j}(A_{m,\tau}^2)}$ using the asymptotics of Corollary~\ref{cor11}
and to remark that $E_{2j}(\Upsilon_\tau\oplus\Upsilon_\tau)=E_j(\Upsilon_\tau)$.
\end{proof}

Finally, the following Weyl-type asymptotics holds:
\begin{cor}\label{cor33}
The total number $\cN(A^2_{m,\tau},m^2)$ of discrete eigenvalues of $A_{m,\tau}$ satisfies
\[
\cN(A^2_{m,\tau},m^2)= \dfrac{16}{\pi} \dfrac{\tau^2}{(\tau^2+4)^2} \,|\Sigma| \,m^2+\cO(m\log m) \text{ as } m\to +\infty.
\]
\end{cor}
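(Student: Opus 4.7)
The plan is to derive the Weyl-type asymptotic by combining the two-sided eigenvalue bounds of Theorem~\ref{thm-ev1} with the standard Weyl law for the effective operator $\Upsilon_\tau\oplus\Upsilon_\tau$ on the compact $2$-manifold $\Sigma$.

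First I would reformulate Theorem~\ref{thm-ev1} as a sandwich for the eigenvalue counting function. Using the identity
\[
1-\Big(\frac{\tau^2-4}{\tau^2+4}\Big)^2=\frac{16\tau^2}{(\tau^2+4)^2},
\]
the two inequalities of Theorem~\ref{thm-ev1} show that $E_j(A_{m,\tau}^2)<m^2$ is implied by
\[
(1+b\delta)\,E_j(\Upsilon_\tau\oplus\Upsilon_\tau) + c\bigl(\delta+m^2 e^{-2\mu m\delta}\bigr) < \tfrac{16\tau^2}{(\tau^2+4)^2}\,m^2,
\]
and in turn implies the analogous condition with $(1-b\delta)$ and $-c(\delta+m^2 e^{-2\mu m\delta})$ in place of $(1+b\delta)$ and $+c(\delta+m^2 e^{-2\mu m\delta})$. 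Setting
\[
E^\pm(m,\delta):=\frac{1}{1\mp b\delta}\Big[\frac{16\tau^2}{(\tau^2+4)^2}\,m^2\pm c\bigl(\delta+m^2 e^{-2\mu m\delta}\bigr)\Big],
\]
one obtains
\[
\cN\bigl(\Upsilon_\tau\oplus\Upsilon_\tau,\,E^-(m,\delta)\bigr)\le \cN(A_{m,\tau}^2,m^2)\le \cN\bigl(\Upsilon_\tau\oplus\Upsilon_\tau,\,E^+(m,\delta)\bigr).
\]

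Next I would choose $\delta=k\,m^{-1}\log m$ for some $k\ge 1/(2\mu)$, so that $m^2 e^{-2\mu m\delta}\le m$ while $m^2\delta=km\log m$. Expanding $(1\mp b\delta)^{-1}=1\pm b\delta+\cO(\delta^2)$ then gives
\[
E^\pm(m,\delta)=\frac{16\tau^2}{(\tau^2+4)^2}\,m^2 + \cO(m\log m),\quad m\to+\infty.
\]
The final ingredient is the sharp Weyl asymptotic for $\Upsilon_\tau\oplus\Upsilon_\tau$, which is an elliptic self-adjoint second-order differential operator on the compact Riemannian surface $\Sigma$ acting on $\CC^4$-valued functions, whose principal symbol coincides with that of the Laplace--Beltrami operator (the Yang--Mills term is first-order and the curvature terms are zero-order perturbations). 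Standard results yield
\[
\cN(\Upsilon_\tau\oplus\Upsilon_\tau,E)=\frac{|\Sigma|}{\pi}\,E+\cO\bigl(\sqrt{E}\bigr)\quad\text{as } E\to+\infty,
\]
where the leading coefficient $|\Sigma|/\pi=4|\Sigma|/(4\pi)$ reflects the $4$-dimensional fibre. Inserting $E=E^\pm(m,\delta)$ into this and invoking the sandwich above, the Weyl remainder $\cO(\sqrt{E^\pm})=\cO(m)$ is dominated by the $\cO(m\log m)$ contribution from the $\delta$-optimization, and one arrives at
\[
\cN(A_{m,\tau}^2,m^2)=\frac{16}{\pi}\,\frac{\tau^2}{(\tau^2+4)^2}\,|\Sigma|\,m^2+\cO(m\log m),
\]
as desired.

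I expect the only delicate point to be the justification of the \emph{lower} bound on $\cN(A_{m,\tau}^2,m^2)$: the estimate of Theorem~\ref{thm-ev1} is stated for indices $j$ already in the discrete range $\{1,\dots,\cN(A_{m,\tau}^2,m^2)\}$, whereas using it to produce new eigenvalues below $m^2=\inf\spec_\ess(A_{m,\tau}^2)$ requires the upper estimate $E_j(A_{m,\tau}^2)\le U_j(m,\delta)$ to hold also for min-max values $\varepsilon_j(A_{m,\tau}^2)$ up to the essential spectrum threshold. Inspection of the proof in Subsection~\ref{sec-upp1}, where the upper bound comes from a min-max comparison with a trial space built from eigenfunctions of $\Upsilon_\tau$ in a tubular neighbourhood of $\Sigma$, shows that the bound is in fact valid for all min-max values, so that the sandwich goes through unchanged.
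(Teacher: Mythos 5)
Your proposal is correct and follows essentially the same route as the paper: sandwich the counting function using the two-sided bounds of Theorem~\ref{thm-ev1} with $\delta\sim m^{-1}\log m$, then apply the Weyl law for the elliptic operator $\Upsilon_\tau\oplus\Upsilon_\tau$ on the compact surface $\Sigma$. Your closing remark about the lower bound on $\cN(A^2_{m,\tau},m^2)$ is exactly the right subtlety to flag, and your resolution is also correct: the upper estimate of Lemma~\ref{lem-upb} holds for all $j\in\NN$ and the comparison with $A_{m,\tau}^2$ runs through min-max values (Rayleigh quotients), so new eigenvalues below the essential spectrum threshold are indeed produced.
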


\begin{proof}
Using Theorem~\ref{thm-ev1} with $\delta= k m^{-1} \log m$ and a sufficiently large $k>0$ one concludes that
there exist constants $C>0$ and $m_0>0$ such that for $m>m_0$ there holds
\begin{multline}
   \label{eq-weyl1}
\cN\Big(\Upsilon_\tau\oplus\Upsilon_\tau,\dfrac{16 \tau^2}{(\tau^2+4)^2} m^2-Cm \log m\Big)\\
\le\cN(A^2_{m,\tau},m^2) \le
\cN\Big(\Upsilon_\tau\oplus\Upsilon_\tau, \dfrac{16 \tau^2}{(\tau^2+4)^2} m^2+Cm \log m\Big).
\end{multline}
Due to the obvious identity $\cN(\Upsilon_\tau\oplus\Upsilon_\tau,E)\equiv 2\cN(\Upsilon_\tau,E)$
it is sufficient to study the behavior of $\cN(\Upsilon_\tau,E)$ for large $E$. As $\Upsilon_\tau$
is an elliptic differential operator on a compact manifold having the same principal symbol as the Laplacian,
the classical Weyl asymptotics, see e.g. Section 16.1 in \cite{S01}, gives
\[
\cN(\Upsilon_\tau,E)=2\cdot \dfrac{|\Sigma|}{4\pi }\,E + \cO(\sqrt{E}), \quad E\to+\infty,
\]
and the substitution into \eqref{eq-weyl1} gives the result.
We remark that the latter result on $\cN(\Upsilon_\tau,E)$ is indeed very standard for the operators with $C^\infty$ coefficients, but in our case
the coefficients are only supposed to be $C^2$. For the extension of the Weyl asymptotics to $C^k$ coefficients see e.g. Theorem~1.1 in \cite{Z99}.
\end{proof}

\begin{remark}
One easily sees that $\Upsilon_\tau$ commutes with the charge conjugation operator $u \mapsto \sigma_2 \overline{u}$
satisfying $\langle u,  \sigma_2 \overline{u}\rangle_{L^2(\Sigma,\CC^2)}=0$ for any $u\in {L^2(\Sigma,\CC^2)}$.
This implies that any eigenvalue of $\Upsilon_\tau$ has an even multiplicity, which is in agreement with Theorem~\ref{thm-basic}~(c).
Furthermore, a short direct computation shows that the operators $\Lambda(\theta)$ and $\Lambda(1-\theta)$ are unitarily equivalent, the associated unitary operator
being $u\mapsto (\sigma\cdot \nu) u$. As a result, the operator $\Upsilon_\tau$ is unitarily equivalent to $\Upsilon_{\frac{4}{\tau}}$, which is in agreement with 
Theorem~\ref{thm-basic}~(d). 
\end{remark}

\subsection{Intermediate operator}\label{ssec42}

In what follows, it will be more comfortable to work with another operator which is unitarily equivalent to $\Upsilon_\tau\oplus\Upsilon_\tau$
but acts in a different space. Namely, consider the following Hilbert space:
\begin{equation} \label{def_Hilbert_space_H}
\begin{split}
\cH\equiv\cH_\tau=&\big\{v=(v_+,v_-): v_\pm \in L^2(\Sigma,\CC^4): \quad
\cP^-_\tau v_+ + \cP^+_\tau v_-=0\big\},\\
&\langle u,v\rangle_{\cH}=\langle u_+,v_+\rangle_{L^2(\Sigma,\CC^4)}+\langle u_-,v_-\rangle_{L^2(\Sigma,\CC^4)},
\end{split}
\end{equation}
and denote by $\cL^\tau_0$ the self-adjoint operator associated with the sesquilinear form
\begin{equation}
   \label{eq-ll1}
\begin{aligned}
\ell^\tau_0(v,v)&=\iint_\Sigma \Big(\|\nabla_s v_+\|^2_{T_s\Sigma\otimes\CC^4}+\|\nabla_s v_-\|^2_{T_s\Sigma\otimes\CC^4} \Big)\dd\Sigma,\\
 \dom(\ell^\tau_0)&=\big\{v=(v_+,v_-)\in \cH_\tau: v_\pm \in H^1(\Sigma,\CC^4)\big\},
\end{aligned}
\end{equation}
where $\nabla_s v$ stands for the gradient of $v$ on $\Sigma$.

\begin{prop}\label{prop45} 
 The operators $\Upsilon_\tau\oplus\Upsilon_\tau$  and $\cL^\tau:=\cL^\tau_0+K-M^2$
are unitarily equivalent.
\end{prop}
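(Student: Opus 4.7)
The plan is to exhibit an explicit unitary $\mathcal{U}:L^2(\Sigma,\CC^2)\oplus L^2(\Sigma,\CC^2)\to\cH_\tau$ conjugating $\cL^\tau$ into $\Upsilon_\tau\oplus\Upsilon_\tau$. Since $\cB(s)=-\rmi\beta\,\alpha\cdot\nu(s)$ is self-adjoint and unitary with $\cB(s)^2=I_4$, each fiber decomposes as $\CC^4=V_+(s)\oplus V_-(s)$ into two-dimensional $(\pm 1)$-eigenspaces, globally trivialized by
\[
\Theta_\pm(s):\CC^2\ni \chi\longmapsto\tfrac{1}{\sqrt{2}}\begin{pmatrix}\mp\rmi(\sigma\cdot\nu(s))\chi\\ \chi\end{pmatrix}\in V_\pm(s).
\]
First I would rewrite the transmission condition as $v_++v_-=\tfrac{2}{\tau}\cB(v_+-v_-)$ and parametrize $\cH_\tau$ by the free quantity $w:=v_++v_-\in L^2(\Sigma,\CC^4)$, so that $v_\pm=\tfrac12(I\pm\tfrac{\tau}{2}\cB)w$, which using $\cB^2=I_4$ gives $\|v\|_{\cH}^2=\tfrac{\tau^2+4}{8}\|w\|^2$. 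Setting $w=\sqrt{8/(\tau^2+4)}\,(\Theta_+\chi_1+\Theta_-\chi_2)$ with $(\chi_1,\chi_2)\in L^2(\Sigma,\CC^2)\oplus L^2(\Sigma,\CC^2)$ produces the desired unitary $\mathcal{U}(\chi_1,\chi_2)=v$, while the diagonal multiplication by $K-M^2$ is transported verbatim.

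Next I would pull back the form $\ell_0^\tau$ along $\mathcal{U}$. The polar identity together with the isometry of $\cB$ yield
\[
\ell_0^\tau(v,v)=\tfrac{\tau^2+4}{8}\|\nabla_s w\|^2+\tfrac{\tau^2}{8}\|(\nabla_s\cB)w\|^2+\tfrac{\tau^2}{4}\,\mathrm{Re}\langle\cB(\nabla_s\cB)w,\nabla_s w\rangle.
\]
Since $\nabla_s\cB=-\rmi\beta(\alpha\cdot\dd\nu)$ with $\dd\nu$ the Weingarten map $S$, each piece can be rewritten in intrinsic data via the Pauli identity $(\sigma\cdot a)(\sigma\cdot b)=(a\cdot b)I_2+\rmi\,\sigma\cdot(a\times b)$ and the trace identity $\mathrm{tr}(S^2)=4M^2-2K$. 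Expanding $w$ in the $\Theta_\pm$-basis, the leading kinetic contribution should assemble into $\sum_{j=1,2}\|(\dd+\rmi\mu'\omega)\chi_j\|_{L^2(\Sigma,\CC^2)}^2$, with $\mu':=4/(\tau^2+4)$ and $\omega=\sigma\cdot(\nu\times\dd\nu)$, i.e.\ two copies of the form $\lambda_{\mu'}$ defining $\Lambda(4/(\tau^2+4))$, while the remaining contributions collapse into a diagonal scalar potential $c(\tau)(2M^2-K)\,(I_2\oplus I_2)$.

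Finally I would fix $c(\tau)$ by matching potentials. The elementary identity
\[
\tfrac{8\tau^2}{(\tau^2+4)^2}(2M^2-K)+(K-M^2)=-\Bigl(\tfrac{\tau^2-4}{\tau^2+4}\Bigr)^{\!2}M^2+\tfrac{\tau^4+16}{(\tau^2+4)^2}K
\]
forces $c(\tau)=8\tau^2/(\tau^2+4)^2$, so that adding the $K-M^2$ already built into $\cL^\tau=\cL_0^\tau+K-M^2$ exactly reproduces the scalar potential of $\Upsilon_\tau$. The main obstacle is the second step: verifying that after expanding $\nabla_s(\Theta_\pm\chi_j)$ and collecting the geometric contributions, the connection form that emerges is precisely $\omega=\sigma\cdot(\nu\times\dd\nu)$, that the coefficient in front of $2M^2-K$ is indeed $8\tau^2/(\tau^2+4)^2$, and that all off-diagonal couplings between $\chi_1$ and $\chi_2$ vanish. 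This last cancellation is ultimately enforced by the orthogonality $V_+\perp V_-$ together with the anti-commutation $\gamma_5\cB=-\cB\gamma_5$, which guarantee block-diagonality of the transformed operator.
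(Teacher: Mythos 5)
Your overall strategy (construct an explicit unitary $L^2(\Sigma,\CC^2)\oplus L^2(\Sigma,\CC^2)\to\cH_\tau$ and pull back $\ell^\tau_0$) is the same as the paper's, and your intermediate identity for $\ell^\tau_0(v,v)$ in terms of $w=v_++v_-$ is correct. But there are two genuine problems in what you describe as ``the main obstacle,'' and they would sink the calculation as outlined.

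First, the pointwise $\cB(s)$-eigenspace decomposition via $\Theta_\pm$ does \emph{not} block-diagonalize the form. Pushing through the algebra in the $w$ variable, one finds
\[
\ell^\tau_0(v,v)=\frac{\tau^2+4}{8}\,\big\|(\dd+\rmi\theta A)w\big\|^2
+\frac{\tau^2}{2(\tau^2+4)}(4M^2-2K)\|w\|^2,
\qquad
\theta=\frac{\tau^2}{\tau^2+4},
\]
where $A=\mathrm{diag}(\omega,\omega)$ in the \emph{constant} spinor splitting of $\CC^4$. Now passing to your $s$-dependent frame $\Theta=(\Theta_+\,|\,\Theta_-)$ introduces the extra connection $\Theta^*\dd\Theta$, whose off-diagonal blocks are $\Theta_+^*\dd\Theta_-=\Theta_-^*\dd\Theta_+=-\tfrac{\rmi}{2}\omega$, while $\Theta_+^*A\Theta_-=\omega$. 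The total off-diagonal coupling is therefore $\rmi(\theta-\tfrac12)\omega$, which vanishes only for $\tau^2=4$. Neither the orthogonality $V_+\perp V_-$ nor $\gamma_5\cB=-\cB\gamma_5$ kills it; $\nabla_s$ simply does not preserve the eigenbundles $V_\pm(s)$. Moreover, even the diagonal connection in this frame comes out as $\tfrac{\rmi}{2}\omega$ (since $\Theta_\pm^*A\Theta_\pm=0$), not $\rmi\mu'\omega$ as you assert. The paper sidesteps all this by using the canonical (fiber-constant) splitting into upper and lower spinor components, where $A$ is already block diagonal, so no frame curvature ever appears.

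Second, once this is repaired (use the canonical splitting instead of $\Theta_\pm$), your $w$-parametrization produces coupling $\theta=\tau^2/(\tau^2+4)$, which is $1-\mu'$, not $\mu'=4/(\tau^2+4)$. This is not a contradiction --- the paper notes that $\Lambda(\theta)$ and $\Lambda(1-\theta)$ are unitarily equivalent via $u\mapsto(\sigma\cdot\nu)u$ --- but it means your parametrization reaches $\Upsilon_\tau$ only after an additional, explicitly supplied unitary. The paper avoids this by parametrizing $\cH_\tau$ via $f$ with $v_\pm=\mp\cP_\tau^\pm f$, i.e.\ $f=-\tfrac12\cB w$; that extra $\cB$ conjugation is exactly what shifts $1-\mu'$ back to $\mu'$ in one stroke. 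Your scalar-potential bookkeeping in the last paragraph is correct.

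In summary: the plan has the right shape, and your normalization $\|v\|^2_\cH=\tfrac{\tau^2+4}{8}\|w\|^2$ and the identity (rewritten) $\cB(\nabla_s\cB)=\rmi A$ are genuinely useful, but the central claim of Step~2 --- that the $\Theta_\pm$-frame yields two decoupled copies with coupling $4/(\tau^2+4)$ --- is false, and the ``off-diagonal cancellation'' asserted in the last paragraph does not hold. You should replace the $\cB$-eigenframe with the constant upper/lower spinor splitting, and then either accept the dual coupling $\tau^2/(\tau^2+4)$ plus an explicit final unitary, or parametrize by $f=-\tfrac12\cB w$ as in the paper to land directly on $\Upsilon_\tau$.
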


\begin{proof}
As the matrices $\cP^\pm_\tau(s)$ are invertible for any $s \in \Sigma$, the map $V:L^2(\Sigma,\CC^4)\to \cH$ given by
\begin{equation*}
\begin{split}
(V f)_\pm(s)&=\mp \cP^\pm_\tau(s) f(s) = \mp \Big(\dfrac{\tau}{2}\pm \cB(s)\Big)f(s)\\
&= \Big(-\cB(s)\mp \dfrac{\tau}{2}\Big) f(s) = \Big(i\beta \alpha \nu (s)\mp \dfrac{\tau}{2}\Big) f(s)
\end{split}
\end{equation*}
is bijective. Furthermore,  everywhere on $\Sigma$ one has
\begin{align*}
|(V f)_\pm|^2=\Big|\big(\cB\pm \dfrac{\tau}{2}\big) f \Big|^2& =|\cB f|^2+ \dfrac{\tau^2}{4} |f|^2 \pm \tau \Re \langle \cB f,f\rangle\\
&=\Big(1+\dfrac{\tau^2}{4}\Big) |f|^2  \pm \tau \Re \langle \cB f,f\rangle,
\end{align*}
and then
\begin{align*}
\|(V f)_\pm\|^2_{L^2(\Sigma,\CC^4)}&=\dfrac{\tau^2+4}{4} \|f\|^2_{L^2(\Sigma,\CC^4)}\pm \tau \Re \langle \cB f,f\rangle_{L^2(\Sigma,\CC^4)},\\
\|V f\|^2_\cH&=\|(V f)_+\|^2_{L^2(\Sigma,\CC^4)}+\|(V f)_-\|^2_{L^2(\Sigma,\CC^4)}= \dfrac{\tau^2+4}{2} \|f\|^2_{L^2(\Sigma,\CC^4)}.
\end{align*}
Therefore, the operator
\[
U:=\sqrt{\dfrac{2}{\tau^2+4}}\, V:L^2(\Sigma,\CC^4)\to \cH
\]
is unitary. We are going to show that
$U^*\cL^\tau U=\Upsilon_\tau \oplus \Upsilon_\tau$. As the operators $K$, $M$, $U$ act pointwise, they commute and thus
\begin{equation}
  \label{eq-ula}
U^*\cL^\tau U=U^*\cL^\tau_0 U+K-M^2.
\end{equation}

In order to obtain an expression for $U^*\cL^\tau_0 U$ let us transform the expression $\ell^\tau_0(U f, Uf)$ for $f\in H^1(\Sigma,\CC^4)$.
In the local coordinates of $\Sigma$ one has
\begin{multline*}
\big\|\nabla_s (Uf)_\pm\big\|^2_{T_s\Sigma\otimes\CC^4}
=\sum_{j,k}g^{jk} \big\langle \partial_j (Uf)_\pm, \partial_k (Uf)_\pm\big\rangle_{\CC^4}\\
=\dfrac{2}{\tau^2+4}\sum_{j,k}g^{jk} 
\big\langle \rmi \beta \alpha \cdot  \nu \partial_j f + \rmi \beta \alpha \cdot \partial_j\nu  f \mp \dfrac{\tau}{2}\partial_j f,
\rmi \beta \alpha \cdot  \nu \partial_k f + \rmi \beta \alpha \cdot \partial_k\nu  f \mp \dfrac{\tau}{2}\partial_k f\big\rangle_{\CC^4}
\end{multline*}
and
\begin{multline*}
\big\langle \rmi \beta \alpha \cdot \nu \partial_j f + \rmi \beta \alpha \cdot \partial_j\nu  f \mp \dfrac{\tau}{2}\partial_j f,
\rmi \beta \alpha \cdot \nu \partial_k f + \rmi \beta \alpha \cdot \partial_k\nu  f \mp \dfrac{\tau}{2}\partial_k f\big\rangle_{\CC^4}\\
=\big\langle \rmi \beta \alpha \cdot \nu \partial_j f + \rmi \beta \alpha \cdot \partial_j\nu  f,
\rmi \beta \alpha \cdot \nu \partial_k f + \rmi \beta \alpha \cdot \partial_k\nu  f\big\rangle_{\CC^4}
+\dfrac{\tau^2}{4} \langle \partial_j f, \partial_k f\rangle_{\CC^4}\\
\mp \dfrac{\tau}{2}\Big(\big\langle \partial_j f, \rmi \beta \alpha \cdot \nu \partial_k f + \rmi \beta \alpha \cdot \partial_k\nu  f\big\rangle_{\CC^4}
+\big\langle \rmi \beta \alpha \cdot \nu \partial_j f + \rmi \beta \alpha \cdot \partial_j\nu  f, \partial_k f\big\rangle_{\CC^4}\Big).
\end{multline*}
It follows that
\begin{equation*}
\begin{split}
&\big\|\nabla_s (Uf)_+\big\|^2_{T_s\Sigma\otimes\CC^4}+\big\|\nabla_s (Uf)_-\big\|^2_{T_s\Sigma\otimes\CC^4}\\
&=\dfrac{4}{\tau^2+4}\sum_{j,k}g^{jk} \Big(\big\langle \rmi \beta \alpha \cdot \nu \partial_j f + \rmi \beta \alpha \cdot \partial_j\nu  f,
\rmi \beta \alpha \cdot \nu \partial_k f + \rmi \beta \alpha \cdot \partial_k\nu  f\big\rangle_{\CC^4}
+\dfrac{\tau^2}{4} \langle \partial_j f, \partial_k f\rangle_{\CC^4}\Big).
\end{split}
\end{equation*}
We further use the unitarity of $\beta$ and of $\alpha\cdot\nu$ to transform
\begin{equation*}
\begin{split}
\big\langle \rmi \beta \alpha \cdot \nu \partial_j f + &\rmi \beta \alpha \cdot \partial_j\nu  f,
\rmi \beta \alpha \cdot \nu \partial_k f + \rmi \beta \alpha\cdot  \partial_k\nu  f\big\rangle_{\CC^4} \\
&=\big\langle \alpha \cdot \nu \partial_j f + \alpha \cdot \partial_j\nu  f,
\alpha \cdot \nu \partial_k f + \alpha \cdot \partial_k\nu  f\big\rangle_{\CC^4}\\
&=\big\langle \partial_j f + (\alpha \cdot  \nu ) (\alpha \cdot  \partial_j\nu)  f,
\partial_k f + (\alpha \cdot  \nu ) (\alpha \cdot  \partial_k\nu)  f\big\rangle_{\CC^4}=:J.
\end{split}
\end{equation*}
Now we use the identity 
\begin{equation}
   \label{eq-axy}
(\alpha\cdot x) (\alpha\cdot y)=(x\cdot y) I_4 + \rmi \gamma_5 \alpha \cdot (x\times y)
\end{equation}
and the equality $\nu \cdot \partial_j \nu=0$, which holds due to $|\nu|=1$, to find
\[
J=\big\langle \partial_j f + \rmi \gamma_5 \alpha \cdot (\nu\times \partial_j \nu)  f,
\partial_k f + \rmi \gamma_5 \alpha \cdot (\nu\times \partial_k \nu)  f\big\rangle_{\CC^4}.
\]
Denote
\begin{equation}
     \label{eq-aaj}
A_j:=\gamma_5 \alpha \cdot (\nu\times \partial_j \nu),
\end{equation}
then we have 
\begin{multline*}
\big\|\nabla_s (Uf)_+\big\|^2_{T_s\Sigma\otimes\CC^4}+\big\|\nabla_s (Uf)_-\big\|^2_{T_s\Sigma\otimes\CC^4}\\
=\dfrac{4}{\tau^2+4}\sum_{j,k}g^{jk} \Big(
\big\langle \partial_j f + \rmi A_j  f, \partial_k f + \rmi A_k f\big\rangle_{\CC^4}
+\dfrac{\tau^2}{4} \langle \partial_j f, \partial_k f\rangle_{\CC^4}\Big).
\end{multline*}
Because of
\begin{equation*}
\begin{split}
&\big\langle \partial_j f + \rmi A_j  f, \partial_k f + \rmi A_k f\big\rangle_{\CC^4}
+\dfrac{\tau^2}{4} \langle \partial_j f, \partial_k f\rangle_{\CC^4}\\
&\qquad \qquad \qquad =\dfrac{\tau^2+4}{4} \langle \partial_j f, \partial_k f\rangle_{\CC^4} + 
\big\langle \partial_j f , \rmi A_k f\big\rangle_{\CC^4}+\big\langle \rmi A_j  f, \partial_k f \big\rangle_{\CC^4}
+\big\langle \rmi A_j  f, \rmi A_k f\big\rangle_{\CC^4}\\
&\qquad \qquad \qquad =\dfrac{\tau^2+4}{4}\Big( \langle \partial_j f, \partial_k f\rangle_{\CC^4}
+\big\langle \partial_j f , \rmi \dfrac{4}{\tau^2+4}A_k f\big\rangle_{\CC^4}+\big\langle \rmi \dfrac{4}{\tau^2+4} A_j  f, \partial_k f \big\rangle_{\CC^4}\\
&\qquad \qquad \qquad \qquad \qquad \qquad +\dfrac{\tau^2+4}{4}\big\langle \rmi \dfrac{4}{\tau^2+4} A_j  f, \rmi \dfrac{4}{\tau^2+4}A_k f\big\rangle_{\CC^4}\Big)\\
&=\dfrac{\tau^2+4}{4} \bigg( \Big\langle \partial_j f+\rmi \dfrac{4}{\tau^2+4} A_j f, \partial_k f+\rmi \dfrac{4}{\tau^2+4} A_k f\Big\rangle_{\CC^4}
+\dfrac{\tau^2}{4}\cdot \Big( \dfrac{4}{\tau^2+4}\Big)^2\big\langle \rmi A_j  f, \rmi A_k f\big\rangle_{\CC^4}\bigg),
\end{split}
\end{equation*}
we obtain
\begin{equation}
   \label{eq-ll3}
\begin{split}
\big\|&\nabla_s (Uf)_+\big\|^2_{T_s\Sigma\otimes\CC^4}+\big\|\nabla_s (Uf)_-\big\|^2_{T_s\Sigma\otimes\CC^4}\\
&=\sum_{j,k}g^{jk} \bigg( \big\langle \partial_j f+\rmi \dfrac{4}{\tau^2+4} A_j f, \partial_k f+\rmi \dfrac{4}{\tau^2+4} A_k f\big\rangle_{\CC^4}
+\dfrac{4\tau^2}{(\tau^2+4)^2}\big\langle A_j  f, A_k f\big\rangle_{\CC^4}\bigg)\\
&=\sum_{j,k}g^{jk}  \bigg( \big\langle \partial_j f+\rmi \dfrac{4}{\tau^2+4} A_j f, \partial_k f+\rmi \dfrac{4}{\tau^2+4} A_k f\big\rangle_{\CC^4}\bigg)
+ \dfrac{4\tau^2}{(\tau^2+4)^2} \big\langle f, W f\big\rangle_{\CC^4}
\end{split}
\end{equation}
with 
\begin{equation*}
  W:=\sum_{j,k}g^{jk} A_j A_k.
\end{equation*}
where we used $A_j^*=A_j$, which holds by \eqref{eq-aaj} and \eqref{commutation_gamma_5}.
Using the expression for $A_j$ and \eqref{eq-axy} we obtain
\begin{equation}
    \label{eq-ajak}
\begin{split}
A_j A_k&=\gamma_5 \big(\alpha \cdot(\nu\times\partial_j \nu)\big)\gamma_5 \big(\alpha \cdot(\nu\times\partial_k \nu)\big)\\
&=\big(\alpha \cdot(\nu\times\partial_j \nu)\big)\big(\alpha \cdot(\nu\times\partial_k \nu)\big)
= a_{jk} I_4+ \rmi \gamma_5 \alpha\cdot b_{jk}
\end{split}
\end{equation}
with
\[
a_{jk}:=(\nu\times\partial_j \nu)\cdot (\nu\times\partial_k \nu),
\quad
b_{jk}= (\nu\times\partial_j \nu)\times(\nu\times\partial_k \nu).
\]
Due to $g^{jk}=g^{kj}$ and $b_{kj}=-b_{jk}$ we have $\sum_{jk}g^{jk}b_{jk}=0$, which shows that
$W$ is a scalar potential,
\[
W=\sum_{j,k}g^{jk} a_{jk} I_4.
\]		
Recall the elementary identities
\begin{equation}
   \label{cross1}
\begin{aligned}
(a\times b)\cdot (c\times d)&= (a\cdot c) (b\cdot d) -(a\cdot d)(b\cdot c),\\
(a\times b)\times (a\times c)&= \big(a\cdot(b\times c)\big) a, \quad a,b,c,d\in\RR^3,
\end{aligned}
\end{equation}
then 
$a_{jk}= |\nu|^2 (\partial_j\nu\cdot\partial_k\nu) - (\nu \cdot\partial_j\nu) (\nu \cdot\partial_k\nu) = \partial_j\nu\cdot\partial_k\nu$, as $|\nu|=1$.
In order to give a more explicit expression for $W$
we assume that the local coordinates are chosen in such a way that the associated tangent
vectors $t_j$ correspond to the principal directions,
i.e. that $\partial_j \nu=\kappa_j t_j$ with $\kappa_j$
being the principal curvatures, then $g_{jk}$ and $g^{jk}$ are diagonal,
$\partial_j\nu\cdot\partial_k\nu=\kappa_j \kappa_k g_{jk} \delta_{jk}$, and 
\[
W=\sum_{j,k} \kappa_j \kappa_k\delta_{jk} g^{jk} g_{jk}=\kappa_1^2+\kappa_2^2 = 4M^2-2K.
\]
Therefore, it follows from \eqref{eq-ll3} that
\begin{multline}
  \label{eq-elluu}
\ell^\tau_0(Uf,Uf)=\iint_\Sigma \sum_{j,k}g^{jk} \big\langle \partial_j f+\rmi \dfrac{4}{\tau^2+4} A_j f, \partial_k f+\rmi \dfrac{4}{\tau^2+4} A_k f\big\rangle_{\CC^4}\dd\Sigma\\
+\dfrac{8\tau^2}{(\tau^2+4)^2} \big\langle f,(2 M^2 - K)f\big\rangle_{L^2(\Sigma,\CC^4)}.
\end{multline}
Furthermore, using in \eqref{eq-aaj} the expressions \eqref{def_Dirac_matrices} for the Dirac matrices 
one has
\begin{equation}
   \label{eq-aomega}
\gamma_5 \alpha_j=\begin{pmatrix}
\sigma_j & 0\\
0 & \sigma_j
\end{pmatrix}, \quad
A_j= \begin{pmatrix}
\omega_j & 0\\
0 & \omega_j
\end{pmatrix}
\end{equation}
with $\omega_j$ given in \eqref{eq-omega0}. Therefore, using the natural unitary identification operator
$J:L^2(\Sigma,\CC^2)\otimes L^2(\Sigma,\CC^2)\to L^2(\Sigma,\CC^4)$ one may rewrite \eqref{eq-elluu}
as
\begin{multline*}
\ell^\tau_0(Uf,Uf)=(\lambda_\frac{4}{\tau^2+4} \oplus \lambda_\frac{4}{\tau^2+4})(J^*f,J^*f)\\
+\dfrac{8\tau^2}{(\tau^2+4)^2} \big\langle J^*f,(2 M^2 - K)J^*f\big\rangle_{L^2(\Sigma,\CC^2)\otimes L^2(\Sigma,\CC^2)},
\end{multline*}
which yields
\[
(U J)^*\cL_0^\tau (UJ) = \Lambda\Big(\frac{4}{\tau^2+4}\Big)\oplus \Lambda\Big(\frac{4}{\tau^2+4}\Big)+\dfrac{8\tau^2}{(\tau^2+4)^2} (2M^2-K).
\]
As $K$, $M$ and $J$ commute, the substitution into \eqref{eq-ula} completes the proof.
\end{proof}

As both $K$ and $M$ are bounded, one can set $c_0:=\|K-M^2\|_\infty$
and remark that for all $c>0$, $\delta>0$ and $j\in \NN$ there holds
\begin{align*}
E_j\big((1+c\delta) \cL^\tau_0+K-M^2\big)&\equiv  E_j\Big((1+c\delta) (\cL^\tau_0+K-M^2) -c\delta(K-M^2)\Big)\\
&\le (1+c\delta) E_j(\Upsilon_\tau \oplus \Upsilon_\tau) c_0 c\delta,\\
E_j\big((1-c\delta) \cL^\tau_0+K-M^2\big)&\equiv  E_j\Big((1-c\delta) (\cL^\tau_0+K-M^2) +c\delta(K-M^2)\Big)\\
&\ge (1-c\delta) E_j(\Upsilon_\tau \oplus \Upsilon_\tau)- c_0 c\delta.
\end{align*}
Therefore, Theorem~\ref{thm-ev1} becomes a consequence of the following two-side asymptotic estimate:
\begin{prop}\label{prop-main1}
Assume that $\delta\equiv \delta(m)>0$ is chosen in order to satisfy \eqref{eq-mdelta},
then there exist constants $c>0$ and $m_0>0$ such that for any $m>m_0$ and $j\in\big\{1,\dots,\cN(A^2_{m,\tau},m^2)\big\}$
it holds
\begin{multline} \label{main_estimates}
\Big( \dfrac{\tau^2-4}{\tau^2+4}\Big)^2 m^2 + E_j\Big( (1-c\delta)\cL^\tau_0 + K-M^2\Big)-c(\delta+ m^2e^{-2\mu m\delta}) \le E_j(A_{m,\tau}^2)\\
\le \Big( \dfrac{\tau^2-4}{\tau^2+4}\Big)^2 m^2 + E_j\Big( (1+c\delta)\cL^\tau_0 + K-M^2\Big)+c(\delta+m^2e^{-2\mu m\delta}).
\end{multline}
\end{prop}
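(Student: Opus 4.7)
Both inequalities in \eqref{main_estimates} should follow from the min-max principle applied to $A_{m,\tau}^2$ using its explicit quadratic form from Proposition~\ref{theorem_form_B_tau_square}. The common strategy is to pass to tubular coordinates $(s,t)\in\Sigma\times(-\delta,\delta)$, where the map $(s,t)\mapsto s+t\nu(s)$ is a diffeomorphism onto a tube $T_\delta$ for $\delta$ small enough, and to separate the problem into a transverse 1D problem in $t$ and a longitudinal problem on $\Sigma$. The Jacobian expansion $J(s,t)=1-2tM(s)+t^2K(s)+O(t^3)$ produces the curvature term $K-M^2$ and the multiplicative $(1\pm c\delta)$ errors on $\mathcal{L}_0^\tau$ appearing in \eqref{main_estimates}. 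The algebraic fact driving the separation is that for $v\in\mathcal{H}_\tau$ the pointwise ratio $|v_+(s)-v_-(s)|^2/(|v_+(s)|^2+|v_-(s)|^2)$ equals the constant $2\tau^2/(\tau^2+4)$, as one checks by diagonalising $\mathcal{B}$ simultaneously with $\mathcal{R}_\tau^\pm$. Consequently, a product ansatz $u_\pm(s+t\nu(s))=\varphi(t)v_\pm(s)$ with $v\in\mathcal{H}_\tau$ and $\varphi$ continuous at $0$ reduces the transverse part of \eqref{equation_form} to the effective 1D Rayleigh quotient $\lambda^2+m^2+\tfrac{8m\lambda\tau}{\tau^2+4}$ for the test profile $\varphi(t)=e^{-\lambda|t|}$, minimised at $\lambda=\mu m$ with minimum value $\big(\tfrac{\tau^2-4}{\tau^2+4}\big)^2m^2$.

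For the upper bound I would use trial functions $u_\pm(s+t\nu(s))=\chi_\delta(t)\,e^{\mp\mu m t}\,v_\pm(s)$ with $v\in\dom(\ell_0^\tau)$ and a smooth cutoff $\chi_\delta$ equal to $1$ on $[-\delta/2,\delta/2]$ and supported in $(-\delta,\delta)$, extended by zero outside $T_\delta$. These lie in $\dom(A_{m,\tau})$ since the transmission condition is inherited from $\mathcal{H}_\tau$. Substituting into \eqref{equation_form}, using the Jacobian expansion and integrating explicitly in $t$ produces the desired upper bound: the $O(m^2 e^{-2\mu m \delta})$ error comes from the cutoff tail and the $O(\delta)$ errors from the curvature corrections, so the min-max principle concludes.

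The lower bound is the main obstacle. I plan to use an IMS-type partition of unity $1=\chi_{\mathrm{in}}^2+\chi_{\mathrm{out}}^2$ with $\chi_{\mathrm{in}}$ supported in $T_\delta$, bounding the form of $A_{m,\tau}^2$ below by a direct sum with an IMS localisation error of order $\delta^{-2}$ (negligible once $m\delta\to+\infty$). On the exterior piece $A_{m,\tau}$ coincides with the free Dirac operator and the form is $\ge m^2\|u\|^2$, contributing only to the essential-spectrum threshold. Inside $T_\delta$, after the change of variables and for each $s\in\Sigma$, I would decompose $u(s,\cdot)=c(s)\varphi(\cdot)+u^\perp(s,\cdot)$ with $u^\perp$ orthogonal in $L^2(-\delta,\delta)$ to the transverse ground state $\varphi(t)=e^{-\mu m|t|}$. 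A spectral gap of order $m^2$ between this ground state (at $\big(\tfrac{\tau^2-4}{\tau^2+4}\big)^2m^2$ modulo $O(m^2 e^{-2\mu m \delta})$) and the next eigenvalue of the transverse 1D operator (which is $\ge m^2$) ensures that $u^\perp$ contributes at least $m^2\|u^\perp\|^2$ to the form, hence stays in the essential-spectrum range. The ground-state coefficient $c$ is then controlled from below by the longitudinal form $\big(\tfrac{\tau^2-4}{\tau^2+4}\big)^2m^2\|c\|^2_\mathcal{H}+(1-c\delta)\ell_0^\tau(c,c)+\langle c,(K-M^2)c\rangle_\mathcal{H}$, and the min-max principle concludes. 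The most delicate technical points will be (i) checking that the transverse decomposition is compatible with the transmission condition encoded in $\mathcal{H}_\tau$, (ii) establishing the spectral gap uniformly in $s\in\Sigma$, and (iii) showing that the cross terms between $c\varphi$ and $u^\perp$ in the 3D form are absorbable into the announced error $\delta+m^2 e^{-2\mu m \delta}$, whose subdominance is exactly the content of hypothesis \eqref{eq-mdelta}.
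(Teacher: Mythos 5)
Your upper bound and the surrounding scaffolding are sound and parallel the paper's argument: the constant ratio $|v_+-v_-|^2/(|v_+|^2+|v_-|^2)=2\tau^2/(\tau^2+4)$ on $\cH_\tau$, the optimal transverse rate $\lambda=\mu m$, the resulting 1D minimum $\big(\tfrac{\tau^2-4}{\tau^2+4}\big)^2m^2$, and the product-ansatz trial functions are all correct. The genuine gap is the lower bound. You propose IMS localization with cutoffs at scale $\delta$ and declare the resulting $\delta^{-2}$ error ``negligible once $m\delta\to+\infty$''. That comparison is made against the leading $m^2$, but the relevant comparison is against the \emph{subleading} quantity you must resolve, namely $E_j(\cL_0^\tau+K-M^2)=\cO(1)$ with error $c(\delta+m^2e^{-2\mu m\delta})=o(1)$. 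Since $\delta\to0$ under \eqref{eq-mdelta}, the IMS loss $\delta^{-2}$ diverges and overwhelms everything below leading order; your argument yields only $E_j(A_{m,\tau}^2)\ge\big(\tfrac{\tau^2-4}{\tau^2+4}\big)^2m^2-C\delta^{-2}$, which does not identify the effective operator and is too weak to imply Corollaries~\ref{cor11}--\ref{cor33}. The paper avoids localization error altogether by Neumann form bracketing (Lemma~\ref{lem-qnd}): it enlarges the form domain by permitting a jump on $\partial\Omega^\delta$, which strictly lowers eigenvalues with \emph{zero} localization cost; the change of variables then produces only an $\cO(1)$ Robin-type term at $t=\pm\delta$, handled by the one-dimensional analysis of $H_{m,c}$ (Lemma~\ref{lem1dn}), which keeps the transverse ground state at $-\mu^2m^2\big(1+\cO(e^{-2\mu m\delta})\big)$ and a gap $\gtrsim\delta^{-2}$ to the excited modes.

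A second, smaller gap: you flag the cross terms between $c(s)\varphi(t)$ and $u^\perp$ (your item~(iii)) but give no mechanism for controlling them, and they are not trivially absorbable because $\nabla_s$ does not commute with the transverse projector. The paper's device (Lemma~\ref{lem29}) is that the projector is fibered as $\pi(s)=\Theta(s)\,\pi_0\,\Theta(s)^*$ with an $s$-\emph{independent} 1D projector $\pi_0$ and the $C^2$ conjugation $\Theta(s)=\tfrac{1}{\sqrt2}\big(I_4+\rmi\,\alpha\cdot\nu(s)\big)$, which makes $[\nabla_s,\Pi]$ uniformly bounded in $m$ and $\delta$; the cross terms $\langle\nabla_s(\Pi u),\nabla_s(\Pi^\perp u)\rangle$ are then split by Cauchy--Schwarz and absorbed into the $\delta^{-2}$ transverse gap (and into the $c\delta$ error). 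Without an argument of this kind, the longitudinal decomposition in your lower bound does not close.
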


The proof of Proposition~\ref{prop-main1} occupies the rest of the paper and is split into several parts.
In Subsection \ref{sec-tub} we give first
a two-side estimate for the eigenvalues $E_j(A_{m,\tau}^2)$ in terms of operators
in $\Sigma\times(-\delta,\delta)$ by using tubular coordinates. In Subsection \ref{sec-upp1}
we obtain the right-hand side inequality of \eqref{main_estimates}, and Subsection~\ref{sec-low1} is devoted to the lower bound.

One may use a part of the computation of Proposition~\ref{prop45} to give some additional information
on the external Yang-Mills potential given by the form  $\omega$ and appearing in the definition of the effective operator $\Upsilon_\tau$.

\begin{prop}\label{prop-field1}
Let $\omega$ be given by \eqref{eq-omega0} and let $\theta \in \RR$. Then
the field strength $F_\theta$ defined by $\theta\,\omega$ is $F^\theta= 2\theta (1-\theta) K (\sigma \cdot \nu) \vol_\Sigma$
with $\vol_\Sigma$ being the volume form on $\Sigma$.
\end{prop}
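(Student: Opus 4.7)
The plan is to compute the curvature $F^\theta$ of the $U(2)$-connection $D=\dd+\rmi\theta\omega$ directly in local coordinates. Recall that if we write $D_j=\partial_j+\rmi\theta\omega_j$, the field strength is characterized by $[D_j,D_k]=\rmi F^\theta_{jk}$, which gives the component-wise formula
\[
F^\theta_{jk}=\theta(\partial_j\omega_k-\partial_k\omega_j)+\rmi\theta^2[\omega_j,\omega_k],
\]
so that $F^\theta=F^\theta_{12}\,\dd s_1\wedge\dd s_2$. The task is therefore to evaluate the abelian piece $\partial_j\omega_k-\partial_k\omega_j$ and the commutator $[\omega_j,\omega_k]$ for $\omega_j=\sigma\cdot(\nu\times\partial_j\nu)$, and then to recognize the answer as a multiple of $K(\sigma\cdot\nu)\vol_\Sigma$.

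For the first term, expanding $\partial_j\omega_k-\partial_k\omega_j$ and using equality of mixed partials of $\nu$, the terms $\sigma\cdot(\nu\times\partial_j\partial_k\nu)$ cancel and I would obtain $\partial_j\omega_k-\partial_k\omega_j=2\,\sigma\cdot(\partial_j\nu\times\partial_k\nu)$. Since $\partial_j\nu,\partial_k\nu\in T_s\Sigma$, their cross product is parallel to $\nu$, so
\[
\partial_j\omega_k-\partial_k\omega_j=2\bigl(\nu\cdot(\partial_j\nu\times\partial_k\nu)\bigr)(\sigma\cdot\nu).
\]
For the quadratic term, I would use the Pauli identity $[\sigma\cdot a,\sigma\cdot b]=2\rmi\,\sigma\cdot(a\times b)$ together with the second identity in~\eqref{cross1} (already invoked in the proof of Proposition~\ref{prop45}), which yields $(\nu\times\partial_j\nu)\times(\nu\times\partial_k\nu)=\bigl(\nu\cdot(\partial_j\nu\times\partial_k\nu)\bigr)\nu$. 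Hence
\[
[\omega_j,\omega_k]=2\rmi\bigl(\nu\cdot(\partial_j\nu\times\partial_k\nu)\bigr)(\sigma\cdot\nu).
\]

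Substituting both expressions into the formula for $F^\theta_{jk}$ produces the cancellation
\[
F^\theta_{jk}=\bigl(2\theta-2\theta^2\bigr)\bigl(\nu\cdot(\partial_j\nu\times\partial_k\nu)\bigr)(\sigma\cdot\nu)=2\theta(1-\theta)\,\bigl(\nu\cdot(\partial_j\nu\times\partial_k\nu)\bigr)(\sigma\cdot\nu).
\]
The final step is to identify the scalar factor with the Gauss curvature times the area element. Working in principal coordinates where $\partial_j\nu=\kappa_j t_j$ and $g_{jk}$ is diagonal, one has $\nu\cdot(\partial_1\nu\times\partial_2\nu)=\kappa_1\kappa_2\det(\nu,t_1,t_2)=K\sqrt{\det g}$, which is coordinate-invariant since both sides transform as the Jacobian under orientation-preserving changes. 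This gives $F^\theta_{12}\,\dd s_1\wedge\dd s_2=2\theta(1-\theta)K(\sigma\cdot\nu)\vol_\Sigma$, proving the claim. There is no real obstacle beyond careful sign bookkeeping; the key observation is that both terms in $F^\theta_{jk}$ conveniently share the same scalar factor $\nu\cdot(\partial_j\nu\times\partial_k\nu)$ and the same matrix factor $\sigma\cdot\nu$, so that the non-abelian correction only modifies the numerical coefficient from $2\theta$ to $2\theta(1-\theta)$.
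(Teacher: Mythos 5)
Your proposal is correct and follows essentially the same route as the paper: expand $F^\theta_{jk}=\theta(\partial_j\omega_k-\partial_k\omega_j)+\rmi\theta^2[\omega_j,\omega_k]$, compute the abelian piece to be $2\sigma\cdot(\partial_j\nu\times\partial_k\nu)$, compute the non-abelian piece via the identity $(\nu\times\partial_j\nu)\times(\nu\times\partial_k\nu)=\bigl(\nu\cdot(\partial_j\nu\times\partial_k\nu)\bigr)\nu$, observe that both contribute the same matrix $\sigma\cdot\nu$ and the same scalar factor (so the coefficients combine to $2\theta(1-\theta)$), and then identify $\nu\cdot(\partial_1\nu\times\partial_2\nu)=K\sqrt{\det g}$ in principal coordinates. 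The only cosmetic difference is that you invoke the Pauli commutator $[\sigma\cdot a,\sigma\cdot b]=2\rmi\,\sigma\cdot(a\times b)$ directly, whereas the paper obtains the same relation by recycling the Dirac-matrix computation \eqref{eq-ajak} and the block decomposition \eqref{eq-aomega} from the proof of Proposition~\ref{prop45}; this changes nothing in substance.
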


\begin{proof}
By definition, see \cite[Section 69]{sred}, the field strength $F\equiv F_\theta$ defined by the form $\theta \omega$ is given by
$F=F_{12}\,\dd s_1\wedge \dd s_2$, where $F_{jk}=\theta(\partial_j \omega_k-\partial_k \omega_j) + \rmi \theta^2 (\omega_j \omega_k - \omega_k \omega_j)$. One easily computes $\partial_j \omega_k= \sigma \cdot (\partial_j \nu \times \partial_k \nu)+ \sigma \cdot (\nu \times \partial_j \partial_k \nu)$, which gives $\partial_1 \omega_2-\partial_2 \omega_1= 2\sigma\cdot  x$
with $x:=\partial_1 \nu \times \partial_2 \nu$, and
$\rmi(\omega_j \omega_k - \omega_k \omega_j)=-2 \sigma  \cdot b_{jk}$ in view of \eqref{eq-ajak} and of the block representation \eqref{eq-aomega}.
To obtain a readable expression for $b_{jk}$ we use \eqref{cross1}, then $b_{jk}=\big(\nu \cdot (\partial_j \nu \times \partial_k \nu)\big)\nu$, and $b_{12}=(\nu\cdot x) \nu$ is the orthogonal projection of $x$ onto the line directed by $\nu$.
As the vectors $\partial_j  \nu$ are orthogonal to $\nu$, the vector $x$ is a multiple of $\nu$, therefore, we have
$b_{12}=x$ and $F_{12}= 2\theta(1-\theta) \sigma \cdot x$.
In order to compute the vector $x$ we assume that
the local coordinates are chosen in such a way that
the triple $(t_1,t_2,\nu)$ is direct and
recall that $\partial_j \nu=S t_j$ with $S$
being the Weingarten map, then $x=\partial_1 \nu \times \partial_2 \nu= (\det S) (t_1\times t_2)\equiv K |t_1\times t_2|\nu$.
Having in mind that the volume form is $\vol_\Sigma=|t_1\times t_2|\dd s_1\wedge \dd s_2$,
we arrive at the sought representation.
\end{proof}

\subsection{Reduction to tubular neighborhoods}\label{sec-tub}

The proof of Proposition~\ref{prop-main1} is based on a variant of rather standard estimates
in thin neighborhoods of $\Sigma$. We are going to start with the following result:
\begin{lem}\label{lem-qnd}
There exist $\delta_0>0$ and $c>0$ such that for any $\delta\in(0,\delta_0)$, any $m\in\RR$
and any $j\in\big\{1,\dots,\cN(A^2_{m,\tau},m^2)\big\}$  there holds
\[
E_j(q^N_{m,\tau,\delta})+m^2\le E_j(A^2_{m,\tau})\le E_j(q^D_{m,\tau,\delta})+m^2,
\]
where the sesquilinear forms $q^{N/D}_{m,\delta}$ in $L^2\big(\Sigma\times(-\delta,\delta),\CC^4\big)$ are given by
\begin{multline*}
q^N_{m,\tau,\delta}(u,u)=\iiint_{\Sigma\times(-\delta,\delta)}  \Big((1-c\delta)\|\nabla_s u\|^2_{T_s\Sigma\otimes\CC^4}
+ (K-M^2-c\delta)\, |u|^2\Big)\dd\Sigma\dd t\\
 + \iint_\Sigma \bigg(\int_{-\delta}^\delta |\partial_t u|^2 \dd t  + \frac{2m}{\tau} \big|u(\cdot,0^+)-u(\cdot,0^-)\big|^2 -c \big|u(\cdot,\delta)\big|^2-c\big|u(\cdot,-\delta)\big|^2\bigg) \dd\Sigma
\end{multline*}
with domain
\begin{multline*}
\dom(q^N_{m,\tau,\delta})=\Big\{ u\in H^1\Big((\Sigma\times(-\delta,\delta)) \setminus (\Sigma\times\{0\}), \mathbb{C}^4\Big): 
\cP_\tau^- u(\cdot,0^+)+\cP_\tau^+ u(\cdot,0^-)=0\Big\}
\end{multline*}
and
\begin{multline*}
q^D_{m,\tau,\delta}(u,u)=\iiint_{\Sigma\times (-\delta,\delta)} (1 + c \delta) \|\nabla_s u\|^2_{T_s\Sigma\otimes\CC^4}
+ (K-M^2+c\delta)\,|u|^2\Big)\dd\Sigma\dd t\\
+ \iint_\Sigma \bigg(\int_{-\delta}^\delta |\partial_t u|^2 \dd t  + \frac{2m}{\tau} \big|u(\cdot,0^+)-u(\cdot,0^-)\big|^2 \bigg) \dd\Sigma
\end{multline*}
with domain
\begin{multline*}
\dom(q^D_{m,\tau,\delta})=\Big\{ u\in H^1\Big(\big(\Sigma\times(-\delta,\delta)\big) \setminus (\Sigma\times\{0\}), \mathbb{C}^4\Big):\\
\cP_\tau^- u(\cdot,0^+)+\cP_\tau^+ u(\cdot,0^-)=0, \ u(\cdot,\delta)=u(\cdot,-\delta)=0\Big\}.
\end{multline*}
\end{lem}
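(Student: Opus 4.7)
The plan is to pull the form of $A_{m,\tau}^2 - m^2$ into tubular coordinates around $\Sigma$ and to bracket it above and below by forms on the flat cylinder $\Sigma \times (-\delta, \delta)$. For $\delta > 0$ small enough, $\Phi(s, t) := s + t \nu(s)$ is a $C^3$ diffeomorphism from $\Sigma \times (-\delta, \delta)$ onto a neighborhood $\Sigma_\delta$ of $\Sigma$; in these coordinates the Euclidean metric reads $g(s,t) = (I - t S(s))^T g(s) (I - t S(s))$ with $S = \dd\nu$ the Weingarten map, so the volume Jacobian equals $J(s,t) := \det(I - t S(s)) = 1 - 2 t M(s) + t^2 K(s)$ and the inverse tangential metric satisfies $|g^{ij}(s,t) - g^{ij}(s)| \le c|t|\, g^{ij}(s)$ uniformly. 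I use the unitary rescaling $\mathcal{U} : L^2(\Sigma_\delta, \CC^4) \to L^2(\Sigma \times (-\delta, \delta), \CC^4)$ defined by $(\mathcal{U} U)(s, t) := J(s, t)^{1/2} U(\Phi(s, t))$, which identifies the two $L^2$ spaces isometrically and preserves traces at $\Sigma$ since $J(s, 0) = 1$.

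The crux of the argument is an integration-by-parts identity converting the surface term $\iint_\Sigma M\,(|U_+|^2 - |U_-|^2)\,\dd\Sigma$ appearing in Proposition~\ref{theorem_form_B_tau_square} into a bulk term carrying the potential $K - M^2$. Setting $u = \mathcal{U} U$ and $\alpha(s, t) := \partial_t J / (2 J)$, direct substitution yields the pointwise identity $J\,|\partial_t U|^2 = |\partial_t u - \alpha u|^2 = |\partial_t u|^2 - \alpha \, \partial_t |u|^2 + \alpha^2 |u|^2$. Because $\alpha(\cdot, 0) = -M$, integrating the middle term by parts on $(0, \delta)$ and on $(-\delta, 0)$ separately produces boundary contributions at $t = 0^\pm$ that exactly cancel $M(|U_+|^2 - |U_-|^2)$, leaving
\[
\iiint J\,|\partial_t U|^2 \, \dd\Sigma\, \dd t + \iint_\Sigma M \big(|U_+|^2 - |U_-|^2\big) \dd\Sigma = \iiint \big(|\partial_t u|^2 + (\alpha' + \alpha^2) |u|^2 \big) \dd\Sigma\, \dd t + R_\delta(u),
\]
where $R_\delta(u) := - \iint_\Sigma \alpha(\cdot, \delta) |u(\cdot, \delta)|^2 \, \dd\Sigma + \iint_\Sigma \alpha(\cdot, -\delta) |u(\cdot, -\delta)|^2 \, \dd\Sigma$ is the remainder at $t = \pm \delta$. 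A short direct computation at $t = 0$ gives $\alpha'(\cdot, 0) + \alpha^2(\cdot, 0) = K - M^2$, hence $\alpha' + \alpha^2 = K - M^2 + \cO(|t|)$ uniformly, and since $\alpha(\cdot, \pm \delta) = -M + \cO(\delta)$ stays bounded, $|R_\delta(u)| \le c\big(\|u(\cdot, \delta)\|_{L^2(\Sigma)}^2 + \|u(\cdot, -\delta)\|_{L^2(\Sigma)}^2\big)$. A parallel but easier computation for the tangential gradient yields $g^{ij}(s,t)\,\partial_i U\, \partial_j U \cdot J \in \big[(1 - c \delta)\|\nabla_s u\|^2 - c \delta |u|^2,\; (1 + c\delta)\|\nabla_s u\|^2 + c \delta |u|^2\big]$.

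For the upper bound, given $u \in \dom(q^D_{m,\tau,\delta})$ the Dirichlet condition $u(\cdot, \pm \delta) = 0$ lets one extend $U := \mathcal{U}^{-1} u$ by zero outside $\Sigma_\delta$, producing an element of $\dom(A_{m,\tau})$ since $\mathcal{U}$ preserves traces at $\Sigma$ (so the transmission condition there transfers) and the zero extension stays in $H^1(\Omega_\pm, \CC^4)$. The identities above yield $\|A_{m,\tau} U\|^2 - m^2 \|U\|^2 \le q^D_{m,\tau,\delta}(u, u)$ (the remainder $R_\delta(u)$ vanishes), and the min-max principle delivers $E_j(A_{m,\tau}^2) \le m^2 + E_j(q^D_{m,\tau,\delta})$. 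For the lower bound, I enlarge the form of $A_{m,\tau}^2$ by dropping the $H^1$ matching across $\partial \Sigma_\delta$; the resulting form decouples as $(\tilde q_{\rm int} + m^2) \oplus (\tilde q_{\rm ext} + m^2)$, where $\tilde q_{\rm ext}$ is the form of the Neumann Laplacian on $\RR^3 \setminus \overline{\Sigma_\delta}$ and hence nonnegative, so eigenvalues of the exterior summand lie in $[m^2, \infty)$ and do not contribute to those of the direct sum below $m^2$. Therefore, for $j \le \cN(A_{m,\tau}^2, m^2)$ the min-max principle yields $E_j(A_{m,\tau}^2) \ge m^2 + E_j(\tilde q_{\rm int})$; pulling back via $\mathcal{U}$ and now retaining the remainder $R_\delta$ (since there is no Dirichlet condition at $t = \pm \delta$) produces $\tilde q_{\rm int}(U, U) \ge q^N_{m,\tau,\delta}(\mathcal{U} U, \mathcal{U} U)$, whence $E_j(\tilde q_{\rm int}) \ge E_j(q^N_{m,\tau,\delta})$.

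The heart of the proof is the integration-by-parts identity that transfers the surface term $M(|U_+|^2 - |U_-|^2)$ into the bulk potential $K - M^2$ while keeping track of the boundary contribution $R_\delta$; once this identity is in place, the remainder of the argument is a routine bookkeeping of the $\cO(\delta)$ errors coming from the expansion of the metric together with the boundary values of $\alpha$ at $t = \pm \delta$.
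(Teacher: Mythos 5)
Your proof takes essentially the same route as the paper's: pull the quadratic form of $A_{m,\tau}^2-m^2$ into tubular coordinates, absorb the Jacobian through a unitary weight, integrate the cross-term $-\alpha\,\partial_t|u|^2$ by parts to trade the curvature surface terms from Proposition~\ref{theorem_form_B_tau_square} for the bulk potential $\alpha'+\alpha^2=K-M^2+\cO(\delta)$, and then bracket by Dirichlet and Neumann conditions at $t=\pm\delta$, using nonnegativity of the exterior Laplacian and the restriction $j\le\cN(A^2_{m,\tau},m^2)$ exactly as in the text. The only differences from the paper's exposition are cosmetic: you fold the diffeomorphism pullback and the weight $\varphi^{1/2}$ into a single unitary $\cU$, and you arrange the bracketing in a slightly different order, but the central identity and all the estimates coincide. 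One small sign inconsistency should be fixed: you write $\Phi(s,t)=s+t\nu(s)$ yet use $J=\det(I-tS)=1-2tM+t^2K$ and $\alpha(\cdot,0)=-M$; these three are mutually consistent only for $\Phi(s,t)=s-t\nu(s)$ (the paper's convention, with $t>0$ mapped into $\Omega_+$), and with either consistent convention the boundary contributions at $t=0^\pm$ do match up with $M(|U_+|^2-|U_-|^2)$ and the argument goes through.
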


\begin{proof}
The computations are quite standard, but we prefer to give full details
for the sake of completeness. Consider the map
	\begin{equation}\label{eqn:chang_var}
		\Phi: \Sigma\times\RR\ni (s,t)\mapsto s - t\nu (s) \in \RR^3.
	\end{equation}
According to a classical result of differential geometry there is some $\delta_0>0$
such that for all $0<\delta<\delta_0$ the mapping $\Phi:\Sigma\times (-\delta,\delta)\mapsto \Omega^\delta:=\big\{x\in \RR^3: \dist(x,\Sigma)<\delta)\big\}$
is a diffeomorphism with $\dist\big(\Phi(s,t), \Sigma\big)=|t|$. From now we assume that $\delta\in(0,\delta_0)$ and define
\[
\Phi\big(\Sigma\times(0,\delta)\big):=\Omega^\delta_+,
\quad
\Phi\big(\Sigma\times(-\delta,0)\big):=\Omega^\delta_-,
\quad
\Omega^\delta_\pm:=\Omega^\delta\cap \Omega_\pm.
\]

Denote by $a$ the sesquilinear form defined on $\dom(a)=\dom(A_{m,\tau})$ by
\begin{multline*}
a(u,u)=\|A_{m,\tau} u\|^2_{L^2(\RR^3,\CC^4)}-m^2\|u\|^2_{L^2(\RR^3,\CC^4)}\\
=\iiint_{\RR^3\setminus \Sigma} \big|\nabla u\big|^2\, \dd x + \frac{2 m}{\tau} \iint_\Sigma | u_+ - u_-|^2 \dd\Sigma + \iint_\Sigma M |u_+|^2 \dd\Sigma - \iint_\Sigma M |u_-|^2 \dd\Sigma.
\end{multline*}
Furthermore, using the natural identification
\[
L^2(\RR^3,\CC^4)\simeq L^2(\Omega^\delta_+,\CC^4)\oplus L^2(\Omega^\delta_-,\CC^4)\oplus L^2(\RR^3\setminus \overline{\Omega^\delta},\CC^4),
\quad u\simeq (u_+,u_-,u_c),
\]
consider the sesquilinear form
\begin{multline*}
b^N(u,u)=\iiint_{\Omega^\delta_+\cup \Omega^\delta_-\cup(\RR^3\setminus \partial\Omega^\delta)} \big| \nabla u \big|^2\, \dd x \\
          + \frac{2 m}{\tau} \iint_\Sigma | u_+ - u_-|^2 \dd\Sigma + \iint_\Sigma M |u_+|^2 \dd\Sigma 
          - \iint_\Sigma M |u_-|^2 \dd\Sigma
\end{multline*}
defined on the functions $u$ with
\begin{gather*}
u_+\in H^1(\Omega^\delta_+,\CC^4), \quad u_-\in H^1(\Omega^\delta_-\cap \Omega,\CC^4), \quad
u_c\in H^1(\RR^3\setminus \overline{\Omega^\delta},\CC^4),\\
\cP_\tau^- u_+ + \cP_\tau^+ u_-=0 \text{ on } \Sigma,
\end{gather*}
and denote by $b^D$ its restriction to the functions vanishing at $\partial \Omega^\delta$,
then in the sense of forms one has $b^N\le a \le b^D$. Furthermore, one has the representations
\[
b^{N/D}=b_0^{N/D} \oplus b_c^{N/D},
\]
where $b^N_0$ is the sesquilinear form in $L^2(\Omega^\delta,\CC^4)$ given by
\begin{multline*}
b^N_0(u,u)=\iiint_{\Omega^\delta \setminus \Sigma} \big| \nabla u \big|^2\, \dd x  
+ \frac{2 m}{\tau} \iint_\Sigma | u_+ - u_-|^2 \dd\Sigma\\
+ \iint_\Sigma M |u_+|^2 \dd\Sigma - \iint_\Sigma M |u_-|^2 \dd\Sigma,
					\quad u_\pm:=u|_{\Omega^\delta_\pm},
\end{multline*}
on the functions $u$ such that $u_\pm\in H^1(\Omega^\delta_\pm,\CC^4)$ and $\cP_\tau^- u_+ + \cP_\tau^+ u_-=0$ on $\Sigma$,
the sesquilinear form $b^N_c$ is given by
\[
b^N_c(u_c,u_c)=\iiint_{\RR^3\setminus \overline{\Omega^\delta}}
\big| \nabla u_c \big|^2\, \dd x,\quad
u_c\in H^1(\Omega^c,\CC^4),
\]
and the forms $b^D_0$ and $b^D_c$ are the restrictions of $b^N_0$ and $b^N_c$, respectively, on functions
vanishing on $\partial\Omega^\delta\equiv \partial(\RR^3\setminus \overline{\Omega^\delta})$.
Due to the obvious inequalities $b^{N/D}_c\ge 0$
and to the fact that $b^{N/D}_0$ define operators with compact resolvents, one has then 
\begin{equation}
   \label{eq-bab}
E_n(b^N_0)\le E_n(a) \le E_n(b^D_0) \quad \text{ for $n$ with $E_n(a)<0$}
\end{equation}
We are now going to give a lower bound of the form $b^N_0$ and an upper bound of the form $b^D_0$
using the above diffeomorphism $\Phi$.
The metric $G$ on $\Sigma\times(-\delta,\delta)$ induced by $\Phi$
takes the form
\[
G(s,t)=\Tilde g (s,t)+ \dd t^2, \quad \Tilde g(s,t):= g(s)\circ(I_s-t S)^2
\]
where $I_s:T_s \Sigma\to T_s\Sigma$ is the identity map, $S:T_s \Sigma\to T_s \Sigma$
is the Weingarten map, $S=\mathrm{d}\, \nu(s)$, and $g$ is the metric of $\Sigma$ induced
by the embedding in $\RR^3$. The associated volume form on $\Sigma\times(-\delta,\delta)$
is given by
\begin{gather*}
\dd \vol(s,t)=\sqrt{\det G(s,t)}\,\dd s\dd t\equiv
\varphi(s,t) \sqrt{\det g} \,\dd s \dd t\equiv \varphi(s,t) \dd\Sigma(s)\dd t,\\
\varphi(s,t):=\det(I_s-t S)=1-2t M(s)+t^2K(s),
\end{gather*}
and we may assume that $\delta$ is sufficiently small to have $\varphi\ge \frac{1}{2}$.

Let us start by considering the unitary transform
	\[
		U : L^2(\Omega^\delta) \ni u\mapsto  (U u) := u \circ \Phi \in L^2\big(\Sigma \times (-\delta,\delta),\dd\vol\big).
	\]
	Then the standard change of variables with the help of the above expressions for the metric tensor
	show that for $\Tilde u:=U u$ one has in the local coordinates
	\[
	\iiint_{\Omega^\delta_\pm} |\nabla u|^2\dd x = \pm\iint_{\Sigma}\int_0^{\pm\delta} \sum_{j,k=1}^3 G^{jk} \langle \partial_j \Tilde u, \partial_k \Tilde u\rangle  \,\dd\vol(s,t),
	\quad (G^{jk}):=(G_{jk})^{-1}.
	\]
		Therefore, 	if we define the sesquilinear forms $b_1^{N/D}$ in  $L^2\big(\Sigma \times (-\delta,\delta),\dd\vol\big)$ by $b^{N/D}_0(u,u)=b^{N/D}_1(U u, U u)$, then $b_1^N$ is given explicitly
		by
	\begin{equation*}
	\begin{split}
	b^N_1(u,u)&=\iiint_{\Sigma\times(-\delta,\delta)}
	 \sum_{j,k=1}^3 G^{jk} \langle \partial_j u, \partial_k u\rangle \varphi\dd\Sigma\dd t\\
	   &~+\dfrac{2m}{\tau}\iint_\Sigma \big| u(\cdot,0^+)-u(\cdot,0^-)\big|^2\dd\Sigma +\iint_\Sigma M\big| u(\cdot,0^+)\big|^2\dd\Sigma - \iint_\Sigma M\big| u(\cdot,0^-)\big|^2\dd\Sigma 
	\end{split}
	\end{equation*}
	on the domain
	\begin{multline*}
	\dom(b^N_1)=U\dom(b^N_0)=\Big\{ u\in H^1\big( \big(\Sigma\times(-\delta,0), \mathbb{C}^4\big)
	\cup \big(\Sigma\times(0,\delta), \mathbb{C}^4\big), 
	\varphi\dd\Sigma \dd t\big):	\\
	\cP_\tau^- u(\cdot,0^+)+\cP_\tau^+ u(\cdot,0^-)=0\Big\}
	\end{multline*}
	and $b^D_1$ is its restriction to the functions vanishing at $\Sigma\times\{\pm\delta\}$. By construction one has $E_n(b^{N/D}_1)=E_n(b^{N/D}_0)$
	for all $n$, and due to \eqref{eq-bab} there holds
	\begin{equation}
   \label{eq-bab2}
E_n(b^N_1)\le E_n(a) \le E_n(b^D_1) \text{ for any $n$ with $E_n(a)<0$.}
\end{equation}

Due to the above expression for $\Tilde g$ one can estimate, with some $C>0$ that for all for $u\in\dom(b_1^{N/D})$ we have 
	\begin{equation*}
      \begin{split}
		(1-C\delta) \sum_{j,k=1}^2 g^{jk} \langle \partial_j u, \partial_k u\rangle
	+|\partial_t u|^2 &\le \sum_{j,k=1}^3 G^{jk} \langle \partial_j u, \partial_k u\rangle\\
	&\le
	(1+C\delta) \sum_{j,k=1}^2 g^{jk} \langle \partial_j u, \partial_k u\rangle
	+|\partial_t u|^2,
    \end{split}
\end{equation*}
	and then $b^N_2\le b^N_1$ and $b^D_1\le b^D_2$, where the form $b^N_2$ is given by
	\begin{equation*}
	\begin{split}
	b^N_2&(u,u)=\iiint_{\Sigma\times(-\delta,\delta)}
	\Big((1-C\delta)\| \nabla_s u\|^2_{T_s\Sigma\otimes\CC^4} + |\partial_t  u|^2\Big) \varphi\, \dd\Sigma\dd t\\
	  &+\dfrac{2m}{\tau}\iint_\Sigma \big| u(\cdot,0^+)-u(\cdot,0^-)\big|^2\dd\Sigma
		+\iint_\Sigma M\big| u(\cdot,0^+)\big|^2\dd\Sigma - \iint_\Sigma M\big| u(\cdot,0^-)\big|^2\dd\Sigma 
	\end{split}
	\end{equation*}
	on the domain $\dom(b^N_2)=\dom(b^N_1)$, and the form $b^D_2$ is given by 
	\begin{equation*}
	\begin{split}
	b^D_2&(u,u)=\iiint_{\Sigma\times(-\delta,\delta)}
	\Big((1+C\delta)\| \nabla_s u\|^2_{T_s\Sigma\otimes\CC^4} + |\partial_t  u|^2\Big) \varphi\, \dd\Sigma\dd t\\
	 &+\dfrac{2m}{\tau}\iint_\Sigma \big| u(\cdot,0^+)-u(\cdot,0^-)\big|^2\dd\Sigma
		+\iint_\Sigma M\big| u(\cdot,0^+)\big|^2\dd\Sigma - \iint_\Sigma M\big| u(\cdot,0^-)\big|^2\dd\Sigma 
	\end{split}
	\end{equation*}
	on the domain $	\dom(b^D_2)=\big\{u\in \dom(b^N_2): u(\cdot,\pm\delta)=0\big\}$.
	Then for any $n$ one has $E_n(b^N_2)\le E_n(b^N_1)$ and $E_n(b^D_1)\le E_n(b^D_2)$ and, due to \eqref{eq-bab2},
		\begin{equation}
   \label{eq-bab3}
E_n(b^N_2)\le E_n(a) \le E_n(b^D_2) \text{ for any $n$ with $E_n(a)<0$}.
\end{equation}

In order to remove the weight $\varphi$ in the above expressions, let us introduce the unitary transform
	\[
	V : L^2\big(\Sigma\times(-\delta,\delta),\varphi \dd\Sigma \dd t)\to L^2\big(\Sigma\times(-\delta,\delta)\big),
	\quad (Vu)(s,t) := \varphi(s,t)^{\frac{1}{2}} u(s,t)
	\]
	and the sesquilinear forms $b^{N/D}_3(u,u)=b^{N/D}_2(V^{-1}u,V^{-1}u)$ defined on $\dom(b^{N/D}_3)=V\big( \dom(b^{D/N}_2)\big)$.
	One sees easily that $\dom(b^{N/D}_3)=\dom(q^{N/D}_{m,\tau,\delta})$. Furthermore,
	to have a more explicit expression for $b_3^{N/D}$ we remark that for
	\begin{equation*}
	v(s,t):=V^{-1}u(s,t) = \varphi(s,t)^{-\frac12}u(s,t)
	\end{equation*}
	one has
	\begin{equation*}
	\partial_t v = \varphi^{-\frac{1}{2}}\partial_t u - \frac{1}{2}\partial_t \varphi \cdot  \varphi^{-\frac{3}{2}} u
	=\varphi^{-\frac{1}{2}}\partial_t u +(M-tK)  \varphi^{-\frac{3}{2}} u.
	\end{equation*}
Hence, we get
	\begin{align*}
		|\partial_t v|^2 & = \varphi^{-1}|\partial_t u|^2 + \varphi^{-3}(M - t K)^2 |u|^2 + \varphi^{-2}(M- tK) \cdot 2\Re \langle\partial_t u,u\rangle\\
		&=\varphi^{-1}|\partial_t u|^2 + \varphi^{-3}(M - t K)^2 |u|^2 + \varphi^{-2}(M- tK) \partial_t \big(|u|^2\big),
	\end{align*}
which implies 
	\begin{multline*}
		\int_{-\delta}^\delta |\partial_t v|^2\varphi \dd t = 
		\int_{-\delta}^\delta |\partial_t u|^2 \dd t
		+\int_{-\delta}^\delta \varphi^{-2} (M-tK)^2	|u|^2\dd t\\
		+ \int_{-\delta}^\delta  \varphi^{-1}(M- tK) \partial_t \big(|u|^2\big)\dd t=:J_1+J_2+J_3.
	\end{multline*}
Using the integration by parts on $(-\delta,0)$ and $(0,\delta)$ we get
\begin{multline*}
J_3=-\int_{-\delta}^{\delta} \partial_t \big(\varphi^{-1}(M- tK)\big)|u|^2\dd t  +\frac{M-\delta K}{1-2\delta M+\delta^2K} \big|u(\cdot,\delta)\big|^2- M \big|u(\cdot,0^+)\big|^2\\
+M \big|u(\cdot,0^-)\big|^2- \frac{M+\delta K}{1+2\delta M + \delta^2K}\big|u(\cdot,-\delta)\big|^2 .
\end{multline*}
In view of the expression for $\varphi$ one sees that uniformly on $\Sigma$ when $\delta$ tends to $0$,
one has
\begin{gather*}
\frac{M\pm\delta K}{1\pm2\delta M+\delta^2K}=M + \cO(\delta), \quad ,\varphi^{-2} (M-tK)^2=M^2+\cO(\delta)\\
-\partial_t \big(\varphi^{-1}(M- tK)\big)=-2M^2+K +\cO(\delta).
\end{gather*}
Therefore, for $u\in \dom(b^{N}_3)$ we can estimate, with a suitable $C>0$,
\begin{multline}
  \label{eq-au1}
\int_{-\delta}^\delta |\partial_t V^{-1} u|^2 \varphi\dd t
\ge \int_{-\delta}^\delta \big( |\partial_t u|^2 
+(K-M^2 -C\delta) |u|^2 \big) \dd t \\
+ M \big|u(\cdot,0^-)\big|^2 - M \big|u(\cdot,0^+)\big|^2
-C \big|u(\cdot,-\delta)\big|^2  -C \big|u(\cdot,\delta)\big|^2,
\end{multline}
while for $u\in \dom(b^{D}_3)$ the terms with $u(\cdot,\pm\delta)$ vanish, thus,
\begin{multline}
  \label{eq-au2}
\int_{-\delta}^\delta |\partial_t V^{-1} u|^2\varphi 
\le \int_{-\delta}^\delta \big(|\partial_t u|^2\dd t
+(K-M^2 +C\delta) |u|^2 \big) \dd t \\
+ M \big|u(\cdot,0^-)\big|^2 - M \big|u(\cdot,0^+)\big|^2.
\end{multline}

In order to control the integral of $ \| \nabla_s v\|^2_{T_s\Sigma\otimes\CC^4} \varphi$
we remark that due to the Cauchy-Schwarz inequality one has
\begin{equation*}
\begin{split}
\| \nabla_s v\|^2_{T_s\Sigma\otimes\CC^4}
&=\Big\| \varphi^{-\frac12}\nabla_s u-\dfrac{1}{2} \varphi^{-\frac32}\nabla_s \varphi \cdot u\Big\|^2_{T_s\Sigma\otimes\CC^4}\\
&\ge (1-\delta)\big\| \varphi^{-\frac12}\nabla_s u\big\|^2_{T_s\Sigma\otimes\CC^4} +\Big(1-\dfrac{1}{\delta}\Big) \Big\|\dfrac{1}{2} \varphi^{-\frac32}\nabla_s \varphi \cdot u\Big\|^2_{T_s\Sigma\otimes\CC^4}\\
&\ge (1-\delta)\varphi^{-1} \|\nabla_s u\|^2_{T_s\Sigma\otimes\CC^4}- \dfrac{t^2}{\delta \varphi^3} \left\|\nabla_s M-\frac{t}{2} \nabla_s K\right\|^2_{T_s\Sigma} |u|^2,
\end{split}
\end{equation*}
which results in	
\begin{multline}
    \label{eq-au3}
\iiint_{\Sigma\times(-\delta,\delta)}  \| \nabla_s v\|^2_{T_s\Sigma\otimes\CC^4} \varphi \dd\Sigma \dd t\\
\ge (1-\delta)\iiint_{\Sigma\times(-\delta,\delta)} \| \nabla_s u\|^2_{T_s\Sigma\otimes\CC^4} \dd\Sigma \dd t
-C'\delta \iiint_{\Sigma\times(-\delta,\delta)}  | u|^2 \dd\Sigma \dd t
\end{multline}
with a suitable $C'>0$. Analogous estimates give
\begin{multline}
   \label{eq-au4}
\iiint_{\Sigma\times(-\delta,\delta)}  \| \nabla_s v\|^2_{T_s\Sigma\otimes\CC^4} \varphi \dd\Sigma \dd t\\
\le (1+\delta)\iiint_{\Sigma\times(-\delta,\delta)} \| \nabla_s u\|^2_{T_s\Sigma\otimes\CC^4} \dd\Sigma \dd t
+C''\delta \iiint_{\Sigma\times(-\delta,\delta)} |u|^2 \dd\Sigma \dd t
\end{multline}
with some $C''>0$. The substitution of \eqref{eq-au1} and \eqref{eq-au3} into the expression for $b^N_3$
and of \eqref{eq-au2} and \eqref{eq-au4} into the expression for $b^D_3$ give the result.
\end{proof}

For the rest of the section we always assume that $\delta$ is any function of $m$ satisfying \eqref{eq-mdelta},
then the assumptions of Lemma~\ref{lem-qnd} are satisfied for large $m$. Recall that the parameter $\mu\in(0,1)$ was introduced in \eqref{eq-aaa}.

\subsection{Upper bound}\label{sec-upp1}

In this section we derive an upper bound for the eigenvalues of $q_{m, \tau, \delta}^D$ from Lemma~\ref{lem-qnd}.
Let us start with the analysis of an auxiliary one-dimensional operator.

\begin{lem} \label{lem1dd}
  Let $s\in\Sigma$, $m>0$, consider the following sesquilinear form $t^D_{s,m}$
	in $L^2\big((-\delta,\delta),\CC^4\big)$ given by
	\begin{align*}
	t^D_{s,m}(u,u)&=\int_{-\delta}^\delta |u'|^2 \dd t  + \frac{2m}{\tau} \big|u(0^+)-u(0^-)\big|^2,\\
	\dom(t^D_{s,m})&=\Big\{ u\in H^1\big((-\delta,\delta)\setminus\{0\},\CC^4\big):  \cP^-_\tau(s) u(0^+)+\cP^+_\tau(s) u(0^-)=u(\pm\delta)=0\Big\},
	\end{align*}
	and let $T^D_{s,m}$ be the associated self-adjoint operator in $L^2\big((-\delta,\delta),\CC^4\big)$.
  Then for $m\to +\infty$ the first eigenvalue of $T^D_{s,m}$ is independent of $s$,
	has the multiplicity $4$ and is given by
	\begin{equation}
	E_1(T^D_{s,m})= -\mu^2m^2 \big(1+\cO(e^{-2\mu m\delta})\big).  \label{eq-e1d}
	\end{equation}
	Furthermore, one can represent, with a suitable smooth function $\psi_m:(0,\delta)\to\RR$
	independent of $s$,
	\begin{multline}
	   \label{eq-vv1}
	\ker\big( T^D_{s,m} -E_1(T^D_{s,m})\big)
	=\Big\{v:
	v(t)=v_\pm \psi_m(|t|) \text{ for } \pm t>0\\ \text{ with }
	v_\pm\in\CC^4 \text{ such that } \cP^-_\tau(s) v_+ +\cP^+_\tau(s) v_-=0
	\Big\}.
	\end{multline}
\end{lem}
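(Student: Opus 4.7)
The plan is to diagonalize the matrix $\cB(s)$ at the fixed point $s$ and thereby reduce the eigenvalue problem for $T^D_{s,m}$ to two scalar one-dimensional problems with a point interaction at $0$. Since $\cB(s)$ is self-adjoint and unitary on $\CC^4$ with $\cB(s)^2 = I_4$, its spectrum is $\{\pm 1\}$, and from~\eqref{eq_commutation} together with the explicit form $\cB = -\rmi\beta(\alpha\cdot\nu)$, both eigenspaces $V_\pm(s) := \ker(\cB(s) \mp I_4)$ have dimension two. Applying the corresponding orthogonal projectors $\Pi_\pm$ to the transmission condition $\cP_\tau^- u(0^+) + \cP_\tau^+ u(0^-) = 0$ splits it into two scalar conditions $\Pi_\pm u(0^+) = r_\pm\,\Pi_\pm u(0^-)$, where $r_\pm = -(\tau \pm 2)/(\tau \mp 2)$ are precisely the eigenvalues of $\cR_\tau^+$ on $V_\pm$. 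As $\Pi_\pm$ is $t$-independent, the form $t^D_{s,m}$ decouples accordingly into an orthogonal direct sum of two copies (one per dimension of each $V_\pm$) of a scalar form $\widetilde t^\pm$ on $L^2((-\delta,\delta))$, so $T^D_{s,m}$ is unitarily equivalent to $(\widetilde T^+ \oplus \widetilde T^+) \oplus (\widetilde T^- \oplus \widetilde T^-)$ with $\widetilde T^\pm$ scalar one-dimensional operators.

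I would then analyze the scalar operator $\widetilde T^\pm$ associated with the form $\int_{-\delta}^\delta |u'|^2\,\dd t + (2m/\tau)|u(0^+) - u(0^-)|^2$ on $H^1$-functions satisfying $u(\pm\delta) = 0$ and $u(0^+) = r_\pm u(0^-)$. For an eigenvalue $E = -\lambda^2 < 0$, the Dirichlet conditions force the ansatz $u(t) = \alpha\sinh(\lambda(\delta - t))$ for $t > 0$ and $u(t) = \beta\sinh(\lambda(\delta + t))$ for $t < 0$, and the transmission condition gives $\alpha = r_\pm \beta$. The missing matching condition at $0$ is the derivative jump read off the boundary term of the form by integration by parts, namely $u'(0^-) - r_\pm u'(0^+) + (2m(r_\pm - 1)^2/\tau)\,u(0^-) = 0$. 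Plugging in the ansatz and using the algebraic identity $(r_\pm - 1)^2/(1 + r_\pm^2) = 2\tau^2/(\tau^2 + 4)$, which holds for \emph{both} signs, I expect this to collapse to the single equation
\[
\lambda \cosh(\lambda\delta) = \mu\,m\,\sinh(\lambda\delta), \qquad \mu = \dfrac{4|\tau|}{\tau^2 + 4} > 0,
\]
where positivity of the right-hand side uses $\tau < 0$.

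Under the hypothesis $m\delta \to +\infty$ this transcendental equation admits a unique positive root $\lambda$, and an elementary fixed-point iteration starting from $\lambda \approx \mu m$ yields $\lambda = \mu m\bigl(1 + \cO(e^{-2\mu m\delta})\bigr)$, hence $E = -\lambda^2 = -\mu^2 m^2\bigl(1 + \cO(e^{-2\mu m\delta})\bigr)$, matching~\eqref{eq-e1d}. The remaining eigenvalues of $\widetilde T^\pm$ are all positive (they arise from the oscillatory $E > 0$ solutions of the same boundary value problem), so this is exactly $E_1(\widetilde T^\pm)$, and the decomposition above forces $E_1(T^D_{s,m})$ to equal this number with multiplicity four and independent of $s$. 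The description~\eqref{eq-vv1} of the eigenspace then follows by setting $\psi_m(r) := \sinh(\lambda(\delta - r))$ for $r \in (0, \delta)$ and undoing the diagonalization, which repackages the pair of scalar transmission conditions into the full $\CC^4$ condition $\cP_\tau^- v_+ + \cP_\tau^+ v_- = 0$. The only truly non-mechanical step is the coincidence of the characteristic equations for $r_+$ and $r_-$: it is precisely this coincidence that simultaneously produces the four-fold degeneracy and the $s$-independence of the profile $\psi_m$; everything else is routine.
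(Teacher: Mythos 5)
Your proposal is correct and takes essentially the same route as the paper: both reduce the matrix ODE to the scalar transcendental equation $\lambda\delta\coth(\lambda\delta)=\mu m\delta$ and extract the four-fold degenerate first eigenvalue and its asymptotics from the unique root of $x\coth x$. The only cosmetic difference is that you diagonalize $\cB(s)$ into its two $2$-dimensional eigenspaces at the outset, so that $\cR_\tau^+$ acts by the reciprocal scalars $r_\pm$, whereas the paper keeps the matrix ansatz and collapses it at the end via the identities $(\cR_\tau^+)^2+I_4=\frac{2(\tau^2+4)}{4-\tau^2}\cR_\tau^+$ and $(\cR_\tau^+-I_4)^2=\frac{4\tau^2}{4-\tau^2}\cR_\tau^+$; these are the same observation in different clothing.
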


\begin{proof}
Let us start by giving a more precise description of $T^D_{s,m}$. 
It is standard to see that $\dom(T^D_{s,m})\subset H^2\big((-\delta,\delta)\setminus\{0\}\big)$
and that $T^D_{s,m}$ acts as $T^D_{s,m} u=-u''$. Therefore, it is sufficient to specify the boundary conditions
at $0$ and $\pm\delta$. Let $v\in \dom(T^D_{s,m})$,
then $v$ belongs to $\dom(t^D_{s,m})$, i.e.
\begin{align}
   \label{eq-p1d}
v(0^+)&=\cR^+_\tau v(0^-),\\
\label{eq-p2d}
v(\pm\delta)&=0,
\end{align}
and $t^D_{s,m}(u,v)=\langle u,T^D_{s,m}v\rangle_{L^2((-\delta,\delta),\CC^4)}$ for all $u\in  \dom (t^D_{s,m})$. 
Using integration by parts on $(-\delta,0)$ and $(0,\delta)$ we conclude that
\begin{align*}
t^D_{s,m}(u,v)&=\int_{-\delta}^\delta \langle u, -v''\rangle\dd t +s^D_{s,m}(u,v),\\
s^D_{s,m}(u,v)&=\big\langle u(0^-),v'(0^-)\big\rangle
-\big\langle u(0^+),v'(0^+)\big\rangle + \frac{2m}{\tau} \big \langle u(0^+)-u(0^-), v(0^+)-v(0^-)\big\rangle.
\end{align*}
Therefore, it is sufficient to check for which $v$ one has $s^D_{s,m}(u,v)=0$ for all $u\in \dom(t^D_{s,m})$.
Due to the fact that $u(0^+) = \cR_\tau^+ u(0^-)$, $u(0^-)\in\CC^4$ is arbitrary and to
\[
s^D_{s,m}(u,v)
=\big\langle u(0^-),v'(0^-)-\cR^+_\tau v'(0^+)\big\rangle 
+ \frac{2m}{\tau} \Big \langle u(0^-), (\cR^+_\tau-I_4)\big(v(0^+)-v(0^-)\big)\Big\rangle,
\]
we have then
\begin{equation}
   \label{eq-p3d}
\cR^+_\tau v'(0^+)-v'(0^-)=\frac{2m}{\tau} (\cR^+_\tau-I_4)\big(v(0^+)-v(0^-)\big).
\end{equation}
Therefore, the domain of $T^D_{s,m}$ consists of the functions $v\in H^2\big((-\delta,\delta)\setminus\{0\}\big)$
satisfying the boundary conditions \eqref{eq-p1d}, \eqref{eq-p2d} and \eqref{eq-p3d}.

One then concludes that a negative number $E=-k^2$ is an eigenvalue of $T^D_{s,m}$ iff one can find
$a_\pm, b_\pm\in \CC^4$, not all zero, such that that the function $v$ given by
\begin{equation*} 
v(t)=\begin{cases}
a_+ e^{-kt} + b_+ e^{kt}, & t>0,\\
a_- e^{kt} + b_- e^{-kt}, & t<0,
\end{cases}
\end{equation*}
satisfies the above boundary conditions. From \eqref{eq-p2d} we deduce
\begin{equation*}
   a_\pm = \theta b_\pm, \quad \theta:=-e^{2k\delta}
\end{equation*}
and hence
\begin{equation*}
v(t)=\begin{cases}
(\theta e^{-kt} +  e^{kt})b_+ , & t>0,\\
(\theta e^{kt}  +   e^{-kt})b_- , & t<0.
\end{cases}
\end{equation*}
It follows then from \eqref{eq-p1d} that $b_+=\cR^+_\tau b_-$ and
\begin{equation} \label{equation_eigenfunction_upper_bound}
v(t)=\begin{cases}
(\theta e^{-kt} +  e^{kt})\cR^+_\tau b_- , & t>0,\\
(\theta e^{kt}  +   e^{-kt})b_- , & t<0.
\end{cases}
\end{equation}
Then
\begin{align*}
v(0^+)&=(\theta+1)\cR^+_\tau b_-,& v(0^-)&=(\theta+1)b_-,\\
v'(0^+)&=-k(\theta-1)\cR^+_\tau b_-, & v'(0^-)&=k(\theta-1)b_-,
\end{align*}
and the substitution into \eqref{eq-p3d} shows that $E=-k^2$
is an eigenvalue iff the equation
\[
-k(\theta-1)\big((\cR^+_\tau)^2+I_4\big)b_-=\dfrac{2m}{\tau}(\theta+1)(\cR^+_\tau -I_4)^2 \ b_-
\]
admits a solution $b_-\ne 0$. A straightforward calculation shows that
\begin{equation*}
  (\cR_\tau^+)^2 + I_4 = \frac{2 (\tau^2 + 4)}{4 - \tau^2} \cR_\tau^+ \quad \text{and} \quad
  (\cR_\tau^+ - I_4)^2 = \tau \cdot \frac{4 \tau}{4 - \tau^2} \cR_\tau^+
\end{equation*}
and therefore, one may rewrite the last condition as
\[
k \dfrac{\theta-1}{\theta+1}\,b_-=-\dfrac{2m}{\tau}  \cdot \big((\cR^+_\tau)^2+I_4\big)^{-1}(\cR^+_\tau-I_4)^2 \ b_- = \mu m b_-
\]
with $\mu$ given by \eqref{eq-aaa}.
Therefore, a solution $b_-\ne 0$ to the preceding equation exists iff  $k$ satisfies
\begin{equation}
   \label{eq-kk1d}
k\dfrac{\theta-1}{\theta+1}=\mu m,
\end{equation}
and in that case the first eigenvalue is four times degenerate due to the arbitrary choice of $b_-\in\CC^4$,
and the representation \eqref{eq-vv1} follows from the preceding representation for $v$
in~\eqref{equation_eigenfunction_upper_bound}.
In order to show the uniqueness of the lowest eigenvalue and to study the behavior with respect to $m$ and $\delta$, let us rewrite
\eqref{eq-kk1d} in the form
\[
F(k\delta)=\mu m \delta, \quad F(x)= x \coth x.
\]
Since
\begin{equation*}
  F'(x) = \frac{\sinh x \cosh x - x}{\sinh^2 x} > 0, \quad x > 0,
\end{equation*}
one remarks that $F:(0,+\infty)\to(1,+\infty)$ is a diffeomorphism, with $F(0^+)=1$ and $F(+\infty)=+\infty$,
which shows that the solution $k$ is unique for  $\mu m\delta>1$. Furthermore, for $m\delta\to+\infty$
one has obviously $k\delta\to+\infty$, which implies that $\theta\to -\infty$. The substitution into
\eqref{eq-kk1d} shows that $k\sim \mu m$, and another use of \eqref{eq-kk1d} gives \eqref{eq-e1d}.
\end{proof}

Now we are going to use the preceding lemma in order to establish an upper estimate
for the eigenvalues defined by the form $q^D_{m,\tau,\delta}$ from Lemma~\ref{lem-qnd}:
\begin{lem}\label{lem-upb}
There exists $C>0$ and $m_0>0$ such that for $m>m_0$ and any $j\in\NN$ there holds
\[
E_j(q^D_{m,\tau,\delta})\le -\Big( \dfrac{4 \tau}{\tau^2+4}\Big)^2 m^2 + E_j\Big( (1+C\delta)\cL^\tau_0 + K-M^2\Big)
+C\delta+ C m^2e^{-2\mu m\delta}.
\]
\end{lem}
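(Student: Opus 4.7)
The plan is to construct a $j$-dimensional trial subspace in $\dom(q^D_{m,\tau,\delta})$ via a separation of variables, exploiting the one-dimensional analysis already carried out in Lemma~\ref{lem1dd}. The crucial observation is that the first eigenvalue $E_1(T^D_{s,m})=-\mu^2 m^2(1+\cO(e^{-2\mu m\delta}))$ does \emph{not} depend on $s$, and by \eqref{eq-vv1} its eigenspace is parametrised precisely by the pairs $(v_+,v_-)\in\CC^4\times\CC^4$ satisfying the transmission condition $\cP^-_\tau(s) v_++\cP^+_\tau(s) v_-=0$. This dovetails with the Hilbert space $\cH_\tau$ in~\eqref{def_Hilbert_space_H}, so for any $f=(f_+,f_-)\in\dom(\ell^\tau_0)$ we may set
\[
u(s,t):=\psi_m(|t|)\,f_\pm(s)\quad\text{for }\pm t>0,
\]
where $\psi_m$ is the $s$-independent transverse profile supplied by \eqref{eq-vv1}.

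First I would verify that $u\in\dom(q^D_{m,\tau,\delta})$: the $H^1$-regularity on each half and the Dirichlet condition $u(\cdot,\pm\delta)=0$ follow from the corresponding properties of $\psi_m$, while the transmission condition at $t=0$ reduces to $\psi_m(0)(\cP^-_\tau f_++\cP^+_\tau f_-)=0$, which holds because $f\in\cH_\tau$. Because $\psi_m$ does not depend on $s$ one has $\nabla_s u(s,t)=\psi_m(|t|)\nabla_s f_\pm(s)$, and the tangential and transverse contributions in the form $q^D_{m,\tau,\delta}(u,u)$ decouple as products. For the transverse contribution, for each fixed $s$ the function $u(s,\cdot)$ lies in the eigenspace of $T^D_{s,m}$ associated with $E_1(T^D_{s,m})$, hence
\[
\iint_\Sigma t^D_{s,m}\big(u(s,\cdot),u(s,\cdot)\big)\,\dd\Sigma = E_1(T^D_{s,m})\,\|u\|^2_{L^2(\Sigma\times(-\delta,\delta),\CC^4)}.
\]
Putting everything together and dividing by $\|u\|^2=\|\psi_m\|^2_{L^2(0,\delta)}\|f\|^2_{\cH_\tau}$, the Rayleigh quotient of $u$ decouples as
\[
\frac{q^D_{m,\tau,\delta}(u,u)}{\|u\|^2_{L^2}}=E_1(T^D_{s,m})+\frac{\bigl[(1+C\delta)\ell^\tau_0+(K-M^2+C\delta)\bigr](f,f)}{\|f\|^2_{\cH_\tau}}.
\]

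Finally, choosing $V_j\subset\dom(\ell^\tau_0)$ to be the span of the first $j$ eigenfunctions of the operator $(1+C\delta)\cL^\tau_0+K-M^2$ (which is an elliptic operator on the compact manifold $\Sigma$ and therefore has discrete spectrum), the image $\tilde V_j:=\{\psi_m(|\cdot|)\,f_\pm(\cdot):f\in V_j\}$ is a $j$-dimensional subspace of $\dom(q^D_{m,\tau,\delta})$ on which the supremum of the Rayleigh quotient is bounded by
\[
E_1(T^D_{s,m})+E_j\bigl((1+C\delta)\cL^\tau_0+K-M^2\bigr)+C\delta.
\]
Using the asymptotics $E_1(T^D_{s,m})=-\mu^2 m^2+\cO(m^2e^{-2\mu m\delta})$ from \eqref{eq-e1d} together with the identity $\mu^2=\bigl(\tfrac{4\tau}{\tau^2+4}\bigr)^2$ and invoking the min-max principle then produces the claimed upper bound for $E_j(q^D_{m,\tau,\delta})$. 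I expect no serious obstacle: the key point is that the $s$-independence of $E_1(T^D_{s,m})$ and of $\psi_m$ makes the ansatz genuinely separable, so all error terms are already tracked by Lemma~\ref{lem1dd}; only routine bookkeeping of the $C\delta$ correction and of the exponentially small remainder is needed.
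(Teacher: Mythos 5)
Your proof is correct and follows essentially the same route as the paper's: the same separable trial functions built from the $s$-independent transverse ground state $\psi_m$, the same use of the $s$-independence of $E_1(T^D_{s,m})$ to decouple tangential and transverse contributions, and the min-max principle to conclude. The only cosmetic difference is that you pick $V_j$ to be the span of the first $j$ eigenfunctions of the effective tangential operator, whereas the paper takes the infimum over arbitrary $j$-dimensional subspaces $F_j\subset\dom(\ell^\tau_0)$; these are equivalent.
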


\begin{proof}
Recall that $\dom(\ell^\tau_0)$ is defined in \eqref{eq-ll1}. Define for $v = (v_+, v_-) \in\dom(\ell^\tau_0)$
\[
u_v(s,t)=
c_m\psi_m\big(|t|\big)\begin{cases}
v_+(s), & t>0,\\
v_-(s), & t<0,
\end{cases}
\]
with the function $\psi_m$ as in \eqref{eq-vv1}, where the constant $c_m>0$
is independent of $s$ and chosen by $c_m^2\|\psi_m\|^2_{L^2(0,\delta)}=1$.
Then $u_v\in \dom(q^D_{m,\tau,\delta})$ with $\|u_v\|_{L^2(\Sigma\times(-\delta,\delta),\CC^4)}=\|v\|_\cH$.
Due to the choice of $\psi_m$ and $v$ one has then
\begin{multline*}
\iint_\Sigma \bigg(\int_{-\delta}^\delta |\partial_t u_v|^2 \dd t  + \frac{2m}{\tau} \big|u_v(\cdot,0^+)-u_v(\cdot,0^-)\big|^2 \bigg) \dd\Sigma\\
=E_1(T^D_{s,m}) \|u_v\|^2_{L^2(\Sigma\times(-\delta,\delta),\CC^4)}=E_1(T^D_{s,m})\|v\|^2_\cH
\end{multline*}
with the operator $T^D_{s,m}$ from Lemma~\ref{lem1dd}. One also has
\begin{multline*}
\iiint_{\Sigma\times (-\delta,\delta)} \Big((1+c\delta)\|\nabla_s u_v\|^2_{T_s\Sigma\otimes\CC^4}
+ (K-M^2+c\delta)\,|u_v|^2\Big)\dd\Sigma\dd t\\
\begin{aligned}
&=\int_0^\delta c_m^2 \big|\psi_m(t)\big|^2 \dd t \cdot 
\iint_\Sigma \Big((1+c\delta)  \|\nabla_s v_+\|^2_{T_s\Sigma\otimes\CC^4}
+ (K-M^2+c\delta)\,|v_+|^2\Big)\dd\Sigma\\
&\quad +\int_{-\delta}^0 c_m^2 \big|\psi_m(-t)\big|^2 \dd t \cdot 
\iint_\Sigma \Big((1+c\delta) \|\nabla_s v_-\|^2_{T_s\Sigma\otimes\CC^4}
+ (K-M^2+c\delta)\,|v_-|^2\Big)\dd\Sigma\\
&=\iint_\Sigma \Big((1+c\delta) \|\nabla_s v_+\|^2_{T_s\Sigma\otimes\CC^4}
+ (K-M^2+c\delta)\,|v_+|^2\Big)\dd\Sigma\\
&\quad+\iint_\Sigma \Big((1+c\delta) \|\nabla_s v_-\|^2_{T_s\Sigma\otimes\CC^4}
+ (K-M^2+c\delta)\,|v_-|^2\Big)\dd\Sigma\\
&=(1+c\delta)\ell^\tau_0(v,v)+ \big\langle v,(K-M^2+c\delta) v\big\rangle_\cH,
\end{aligned}
\end{multline*}
i.e.
\[
q^D_{m,\tau,\delta}(u_v,u_v)=(1+c\delta)\ell^\tau_0(v,v)+ \big\langle v,(K-M^2+c\delta) v\big\rangle_\cH + E_1(T^D_{s,m})\|v\|^2_\cH.
\]
By Lemma~\ref{lem1dd} one can find $m_0>0$ and $C>0$ independent of $s$ such that
\[
E_1(T^D_{s,m,\delta})\le -\mu^2m^2+Cm^2 e^{-2\mu m\delta} \text{ for } m>m_0,
\]
and then
$$
\dfrac{q^D_{m,\tau,\delta}(u_v,u_v)}{\|u_v\|_{L^2(\Sigma\times(-\delta,\delta),\CC^4)}}
\le -\mu^2m^2 +\dfrac{(1+c\delta)\ell^\tau_0(v,v)+ \big\langle v,(K-M^2) v\big\rangle_\cH}{\|v\|^2_\cH} + Cm^2 e^{-2\mu m\delta} + c\delta.
$$
If $F_j$ is a $j$-dimensional subspace of $\dom(\ell^\tau_0)$, then $\cF_j:=\{u_v: v\in F_j\}$
is a $j$-dimensional subspace of $\dom(q^D_{m,\tau,\delta})$,
and by the min-max-principle one has, by estimating all constants by a generic constant $C$,
\begin{equation*}
\begin{split}
E_j&(q^D_{m,\tau,\delta})\le \min_{\cF_j}\max_{u\in \cF_j} \dfrac{q^D_{m,\tau,\delta}(u,u)}{\|u\|_{L^2(\Sigma\times(-\delta,\delta),\CC^4)}}
\leq\min_{F_j}\max_{v\in F_j} \dfrac{q^D_{m,\tau,\delta}(u_v,u_v)}{\|u_v\|_{L^2(\Sigma\times(-\delta,\delta),\CC^4)}}\\
&\le -\mu^2m^2 +\min_{F_j}\max_{v\in F_j} \dfrac{(1+C\delta)\ell^\tau_0(v,v)+ \big\langle v,(K-M^2) v\big\rangle_\cH}{\|v\|^2_\cH} 
+ Cm^2 e^{-2\mu m\delta} + C\delta\\
&=-\mu^2m^2 +E_j\Big((1+C\delta)\cL^\tau_0+K-M^2\Big) + Cm^2 e^{-2\mu m\delta} + C\delta. \qedhere
\end{split}
\end{equation*}
\end{proof}

\begin{proof}[Proof of the upper bound in Proposition~\ref{prop-main1}]
It is sufficient to substitute the estimate of Lemma~\ref{lem-upb}
into the upper bound of Lemma~\ref{lem-qnd} and to use
\begin{equation*}
   %\label{eq-ma1}
m^2-\mu^2m^2=(1-\mu^2)m^2=\Big(\dfrac{\tau^2-4}{\tau^2+4}\Big)^2 m^2. \qedhere
\end{equation*}
\end{proof}

\subsection{Lower bound}\label{sec-low1}

We start with an estimate for another auxiliary one-dimensional operator.

\begin{lem} \label{lem1dn}
  For $m, c>0$  let $h_{m,c}$ be the sesquilinear form in $L^2\big((-\delta,\delta),\CC^4\big)$ given by
	\begin{align*}
	h_{m,c}(u,u)&=\int_{-\delta}^\delta |u'|^2 \dd t  + \frac{2m}{\tau} \big|u(0^+)-u(0^-)\big|^2 -c \big|u(\delta)\big|^2-c\big|u(-\delta)\big|^2,\\
	\dom(h_{m,c})&=\Big\{ u\in H^1\big((-\delta,\delta)\setminus\{0\},\CC^4\big): \widetilde{P}^-_\tau u(0^+)+\widetilde{P}^+_\tau u(0^-)=0\Big\},\\
	\widetilde{P}^\pm_\tau&:=\dfrac{\tau}{2}\pm \beta,
	\end{align*}
	and let $H_{m,c}$ be the associated self-adjoint operator in $L^2\big((-\delta,\delta),\CC^4\big)$.
  Then for $m\to +\infty$ the first eigenvalue has the multiplicity $4$ and
	\begin{gather}
	E_1(H_{m,c})= -\mu^2m^2 \big(1+\cO(e^{-2\mu m\delta})\big), \label{eq-e1}\\
	E_5(H_{m,c})\ge \dfrac{b^2}{\delta^2} \quad \text{ for some } b>0. \label{eq-e5}
	\end{gather}
	Furthermore, one can represent, with a suitable smooth function $\psi_{m,c}:(0,\delta)\to\RR$,
	\begin{multline}
	   \label{eq-vv2}
	\ker\big( H_{m,c} -E_1(H_{m,c})\big)
	=\Big\{u:
	u(t)=v_\pm \psi_{m,c}\big(|t|\big) \text{ as } \pm t>0\\ \text{ with }
	v_\pm\in\CC^4 \text{ such that } \widetilde{P}^-_\tau v_+ +\widetilde{P}^+_\tau v_-=0
	\Big\}.
	\end{multline}
\end{lem}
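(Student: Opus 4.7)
The plan is to follow the structure of the proof of Lemma~\ref{lem1dd}, accommodating two changes: the Robin-type boundary traces $-c|u(\pm\delta)|^2$ now appear in the form, and the matrix $\cB(s)$ is replaced by the $s$-independent matrix $\beta$. First, a direct integration by parts on $h_{m,c}$ will show that $\dom(H_{m,c})$ consists of $u\in H^2\big((-\delta,\delta)\setminus\{0\},\CC^4\big)$ satisfying the prescribed transmission condition at $0$, the Robin conditions $u'(\pm\delta)=\pm c\,u(\pm\delta)$ arising from the boundary-trace terms of the form, together with the derivative jump
\[
\widetilde R_\tau^+\,u'(0^+)-u'(0^-)=\dfrac{2m}{\tau}\big(\widetilde R_\tau^+-I_4\big)\big(u(0^+)-u(0^-)\big),\qquad \widetilde R_\tau^+:=-(\widetilde P_\tau^-)^{-1}\widetilde P_\tau^+,
\]
on which $H_{m,c}$ acts as $-u''$.

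For a negative eigenvalue $\lambda=-k^2$, I will use the exponential ansatz $u(t)=a_\pm e^{\mp kt}+b_\pm e^{\pm kt}$ for $\pm t>0$. The Robin conditions yield $a_\pm=\theta\, b_\pm$ with
\[
\theta=\dfrac{k-c}{k+c}\,e^{2k\delta},
\]
which is the sole change from the Dirichlet case, where $\theta=-e^{2k\delta}$. Because $\beta^2=I_4$, the algebraic identities $(\widetilde R_\tau^+)^2+I_4=\frac{2(\tau^2+4)}{4-\tau^2}\widetilde R_\tau^+$ and $(\widetilde R_\tau^+-I_4)^2=\frac{4\tau^2}{4-\tau^2}\widetilde R_\tau^+$ still hold verbatim; substituting the ansatz together with $b_+=\widetilde R_\tau^+ b_-$ into the derivative jump then reduces the eigenvalue problem to the scalar equation
\[
k\,\dfrac{\theta-1}{\theta+1}=\mu m,
\]
with $b_-\in\CC^4$ arbitrary, which produces a four-dimensional eigenspace of the form stated in \eqref{eq-vv2}. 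Next I will analyse $G(k):=k(\theta(k)-1)/(\theta(k)+1)$ by sign-checking: one has $G\le 0$ on $(0,k^{**}]$, where $k^{**}>c$ is the unique solution of $\theta(k^{**})=1$, while on $(k^{**},\infty)$ the function $G$ is strictly increasing from $0$ to $+\infty$. Hence $G(k)=\mu m$ has a unique solution $k^*$ for large $m$; with $m\delta\to\infty$ we have $\theta(k^*)$ exponentially large, and expanding $G(k^*)=k^*\big(1-\tfrac{2}{\theta(k^*)+1}\big)=\mu m$ yields $k^*=\mu m\big(1+O(e^{-2\mu m\delta})\big)$, which gives \eqref{eq-e1}.

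The main obstacle will be \eqref{eq-e5}. The key idea is to exploit the fact that $\beta$, having eigenvalues $\pm 1$ with two-dimensional eigenspaces $V_\pm$, commutes with every ingredient of the form: the matrices $\widetilde P_\tau^\pm$ act as scalars on $V_\pm$, and the jump/boundary terms decouple along $V_+\oplus V_-$. Consequently $H_{m,c}$ is unitarily equivalent to $h^+\otimes I_{V_+}\oplus h^-\otimes I_{V_-}$, where $h^\pm$ are scalar Sturm--Liouville operators on $L^2(-\delta,\delta)$ whose negative-eigenvalue equation admits the same scalar reduction as above; in particular each $h^\pm$ contributes exactly one negative eigenvalue equal to $-(k^*)^2$, accounting together for the four-dimensional ground state. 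The next eigenvalue $E_5(H_{m,c})=\min\big(E_2(h^+),E_2(h^-)\big)$ is then obtained by repeating the procedure for $\lambda=k^2>0$ with a trigonometric ansatz: the Robin conditions give $a_\pm=\tilde\theta\, b_\pm$ with $\tilde\theta=(k\cos(k\delta)-c\sin(k\delta))/(k\sin(k\delta)+c\cos(k\delta))$, and the same algebraic collapse reduces the problem to $\tilde\theta=-k/(\mu m)$. Since the right-hand side is $O(1/m)$ on bounded $k$-intervals, the smallest positive solution lies just above the first positive zero of $\tilde\theta$, namely the unique $k_0\in(0,\pi/(2\delta))$ satisfying $k_0\cot(k_0\delta)=c$; for $\delta$ small one has $k_0\delta\ge b$ for some explicit $b>0$, and therefore $E_2(h^\pm)\ge b^2/\delta^2$, completing the proof of \eqref{eq-e5}.
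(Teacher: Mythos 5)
Your negative-eigenvalue analysis reproduces the paper's proof essentially verbatim: the domain characterization, the exponential ansatz, $\theta=\frac{k-c}{k+c}e^{2k\delta}$, the algebraic collapse to $k\frac{\theta-1}{\theta+1}=\mu m$ via $\big((\widetilde R_\tau^+)^2+I_4\big)^{-1}(\widetilde R_\tau^+-I_4)^2=\frac{2\tau^2}{\tau^2+4}I_4$, and the large-$m$ asymptotics. The monotonicity of $G$ on $(k^{**},\infty)$ is asserted rather than proved; the paper establishes it by writing the secular equation as $F_{c\delta}(k\delta)=\mu m\delta$ with $F_\varepsilon(x)=x\frac{x\tanh x-\varepsilon}{x-\varepsilon\tanh x}$ and computing $F'_\varepsilon>0$ explicitly, and also uses the monotonicity of $F_\varepsilon$ in $\varepsilon$ to deduce $k\delta\to\infty$. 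These are details you would need to fill in, but the idea is the same. For \eqref{eq-e5}, your $\beta$-diagonalization $H_{m,c}\cong h^+\otimes I_{V_+}\oplus h^-\otimes I_{V_-}$ (with $h^\pm$ related by the reflection $t\mapsto -t$) is a genuinely different and rather elegant reduction to scalar Sturm--Liouville problems, which the paper does not use; the paper instead substitutes $k\mapsto ik$ directly in the $4$-component secular equation and shows that $G_\varepsilon(x)=-x\frac{x\tan x+\varepsilon}{x-\varepsilon\tan x}<0$ on $(0,\pi/4)$, so there is no eigenvalue in $(0,\pi^2/(16\delta^2))$. Your version makes the fourfold degeneracy transparent and splits the problem into two scalar pieces; the paper's keeps everything compact. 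Both yield $\mu m$ on the right-hand side of an equivalent equation ($\tilde\theta=-k/(\mu m)$ is algebraically the same as $G_{c\delta}(k\delta)=\mu m\delta$).

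There is, however, a genuine gap in your treatment of \eqref{eq-e5}: you never rule out $E=0$ as an eigenvalue. Your argument shows that each $h^\pm$ has a unique negative eigenvalue, and that the smallest positive eigenvalue is at least $k_0^2$ with $k_0\delta$ bounded below; but that leaves open the possibility $E_2(h^\pm)=0$, which would make the claimed bound $E_5(H_{m,c})\ge b^2/\delta^2$ false. The paper handles this explicitly with a linear ansatz $u(t)=a_\pm\pm b_\pm t$: the Robin and transmission conditions force the derivative-jump relation to read $\big(\frac{c}{1-c\delta}+\mu m\big)a_-=0$, whose coefficient is nonzero for large $m$, whence $0$ is not an eigenvalue. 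You need to add this step (or an equivalent argument) before concluding.
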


\begin{proof}
In the proof we rewrite the condition $\widetilde{P}^-_\tau u(0^+)+\widetilde{P}^+_\tau u(0^-)=0$
as $u(0^+)=\widetilde{R}^+_\tau u(0^-)$ with $\widetilde{R}^+_\tau=-(\widetilde{P}^-_\tau)^{-1}\widetilde{P}^+_\tau$.

 Let us give first
a more precise description of $H_{m,c}$. It is standard to see that 
$\dom(H_{m,c})\subset H^2\big((-\delta,\delta)\setminus\{0\},\CC^4\big)$
and that $H_{m,c}$ acts as $H_{m,c} u=-u''$. Therefore, it is sufficient to specify the boundary conditions
at $0$ and $\pm\delta$. Let $v\in \dom(H_{m,c})$,
then $v$ belongs to $\dom(h_{m,c})$, i.e.
\begin{equation}
   \label{eq-p1}
v(0^+)=\widetilde{R}^+_\tau v(0^-),
\end{equation}
and $h_{m,c}(u,v)=\langle u,H_{m,c} v\rangle_{L^2}$ for all $u\in  \dom (h_{m,c})$. 
Using integration by parts on $(-\delta,0)$ and $(0,\delta)$ we conclude that
\begin{align*}
h_{m,c}(u,v)&=\int_{-\delta}^\delta \langle u, -v''\rangle\dd t +s_{m, c}(u,v),\\
s_{m,c}(u,v)&=-\big\langle u(-\delta), v'(-\delta)\big\rangle+ \big\langle u(0^-),v'(0^-)\big\rangle
-\big\langle u(0^+),v'(0^+)\big\rangle\\
&\qquad+\big\langle u(\delta),v'(\delta)\big\rangle
+ \frac{2m}{\tau} \big \langle u(0^+)-u(0^-), v(0^+)-v(0^-)\big\rangle\\
&\qquad-c \big\langle u(\delta),v(\delta)\big\rangle-c \big\langle u(-\delta),v(-\delta)\big\rangle.
\end{align*}
Therefore, it is sufficient to check for which $v$ one has $s_{m,c}(u,v)=0$ for all $u\in \dom(h_{m,c})$.
Testing on $u$ localized near $\pm\delta$ one concludes that $v$ must satisfy
\begin{equation}
   \label{eq-p2}
v'(\pm\delta)=\pm c v(\pm\delta).
\end{equation}
Now assume that $u$ vanishes at $\pm \delta$, then using
\eqref{eq-p1} one rewrites
\begin{align*}
s_{m,c}(u,v)&
=\big\langle u(0^-),v'(0^-)\big\rangle -\big\langle u(0^+),v'(0^+)\big\rangle \\
&\qquad + \frac{2m}{\tau} \big \langle u(0^+)-u(0^-), v(0^+)-v(0^-)\big\rangle\\
&=\big\langle u(0^-),v'(0^-)-\widetilde{R}^+_\tau v'(0^+)\big\rangle
+ \frac{2m}{\tau} \Big \langle u(0^-), (\widetilde{R}^+_\tau-I_4)\big(v(0^+)-v(0^-)\big)\Big\rangle
\end{align*}
implying 
\begin{equation}
   \label{eq-p3}
\widetilde{R}^+_\tau v'(0^+)-v'(0^-)=\frac{2m}{\tau} (\widetilde{R}^+_\tau-I_4)\big(v(0^+)-v(0^-)\big).
\end{equation}
By summarizing the above, the domain of $H_{m,c}$ consists of the functions $v\in H^2\big((-\delta,\delta)\setminus\{0\},\CC^4\big)$
satisfying the boundary conditions \eqref{eq-p1}, \eqref{eq-p2} and \eqref{eq-p3}.

One then concludes that a negative number $E=-k^2$ with $k>0$ is an eigenvalue of $H_{m,c}$ iff one can find
$a_\pm, b_\pm\in \CC^4$, not all zero, such that the associated eigenfunction 
\[
v(t)=\begin{cases}
a_+ e^{-kt} + b_+ e^{kt}, & t>0,\\
a_- e^{kt} + b_- e^{-kt}, & t<0,
\end{cases}
\]
satisfies the above boundary conditions. From \eqref{eq-p2} we deduce
\begin{equation*}
    a_\pm = \theta b_\pm, \quad \theta:=\dfrac{k-c}{k+c}e^{2k\delta},
\quad \text{i.e.} \quad 
v(t)=\begin{cases}
(\theta e^{-kt} +  e^{kt})b_+ , & t>0,\\
(\theta e^{kt}  +   e^{-kt})b_- , & t<0.
\end{cases}
\end{equation*}
It follows then from \eqref{eq-p1} that $b^+=\widetilde{R}^+_\tau b_-$ and
\[
v(t)=\begin{cases}
(\theta e^{-kt} +  e^{kt})\widetilde{R}^+_\tau b_- , & t>0,\\
(\theta e^{kt}  +   e^{-kt})b_- , & t<0.
\end{cases}
\]
Then
\begin{align*}
v(0^+)&=(\theta+1)\widetilde{R}^+_\tau b_-,& v(0^-)&=(\theta+1)b_-,\\
v'(0^+)&=-k(\theta-1)\widetilde{R}^+_\tau b_-, & v'(0^-)&=k(\theta-1)b_-,
\end{align*}
and the substitution into \eqref{eq-p3} shows that $E=-k^2$
is an eigenvalue iff the equation
\[
-k(\theta-1)\big((\widetilde{R}^+_\tau)^2+I_4\big)b_-=\dfrac{2m}{\tau}(\theta+1)(\widetilde{R}^+_\tau-I_4)^2 \ b_-
\]
admits a solution $b_-\ne 0$. One may rewrite the last condition as
\[
k \dfrac{\theta-1}{\theta+1}\,b_-=-\dfrac{2m}{\tau}  \big((\widetilde{R}^+_\tau)^2+I_4\big)^{-1}\cdot (\widetilde{R}^+_\tau-I_4)^2 b_-,
\]
and using the equality $\beta^2=I_4$ we compute
\begin{equation}
   \label{eq-err1}
\big((\widetilde{R}^+_\tau)^2+I_4\big)^{-1}\cdot (\widetilde{R}^+_\tau-I_4)^2 =\dfrac{2\tau^2}{4+\tau^2} I_4.
\end{equation}
Therefore, a solution $b_-\ne 0$ to the above equation exists iff  $k$ satisfies
\begin{equation}
   \label{eq-kk1}
k\dfrac{\theta-1}{\theta+1}=m \mu
\end{equation}
with $\mu$ given by \eqref{eq-aaa},
and in that case the first eigenvalue is four times degenerate due to the arbitrary choice of $b_-$,
and the representation \eqref{eq-vv1} follows from the preceding constructions of the function $v$.
In order to show the uniqueness of $k$ and to study its behavior with respect to $m$, let us rewrite \eqref{eq-kk1}
as
\[
F_{c\delta}(k\delta)=\mu m \delta, \quad F_\varepsilon(x):= x \dfrac{x \tanh x - \varepsilon}{x-\varepsilon \tanh x}.
\]
One remarks that for $\varepsilon\in(0,1)$ the function $F_\varepsilon:(0,+\infty)\to \RR$ is well-defined and
\[
F'_\varepsilon (x)=x\dfrac{\varepsilon(1-\tanh^2 x) + \dfrac{x^2-\varepsilon^2}{\cosh^2 x}}{(x-\varepsilon \tanh x)^2} 
+ \dfrac{1}{x} F_\varepsilon(x),
\]
and $F'_\varepsilon(x)>0$ provided  $x>\varepsilon$ and $F_\varepsilon(x)>0$. Furthermore,
$F_\varepsilon(x)>0$ if and only if $x\tanh x>\varepsilon$, therefore, $F^{-1}_\varepsilon\big((0,+\infty)\big)$
is a subinterval of $(\varepsilon,+\infty)$.
It follows that $F_\varepsilon:F^{-1}_\varepsilon\big((0,\infty)\big)\to (0,\infty)$ is a diffeomorphism,
and there exists a unique solution $k$ provided $c\delta<1$, which is satisfied for large $m$ due to \eqref{eq-mdelta}, 
as $\mu m \delta > 0$.
On the other hand, $F_\varepsilon(x)$ is decreasing in $\varepsilon$ due to
\[
\dfrac{\partial F_\varepsilon(x)}{\partial\varepsilon}=-x^2\,\dfrac{1-\tanh^2 x}{(x-\varepsilon \tanh x)^2}<0,
\]
which implies $k\delta\ge k_0\delta$ with $k_0>0$ being the solution to $F_0(k_0\delta)=\mu m\delta$.
As $F_0(x)=x\tanh x$, one easily checks that $k_0\delta\to +\infty$ for $m\delta\to +\infty$, and then $k\delta\to+\infty$
and $\theta\to +\infty$. Therefore,  $k\sim m \mu$ for large $m$ due to \eqref{eq-kk1}, and another iteration of \eqref{eq-kk1} 
gives \eqref{eq-e1}.

In order to estimate the next eigenvalue of $H_{m,c}$ we proceed first in the same way and show that
$E=k^2$ with $k>0$ is an eigenvalue iff
\begin{equation}
   \label{eq-gkd}
G_{c\delta}(k\delta)=\mu m\delta, \quad G_\varepsilon(x):=F_\varepsilon(ix)=-x\dfrac{x\tan x+\varepsilon}{x-\varepsilon\tan x}.
\end{equation}
Using the convexity of $x\mapsto \tan x$ one sees that
$0<\tan x< \frac{4}{\pi} x$ for $x\in\big(0,\frac{\pi}{4}\big)$, hence,
$G_\varepsilon(x)<0$ for all $x\in\big(0,\frac{\pi}{4}\big)$ and $\varepsilon\in\big(0,\frac{4}{\pi}\big)$.
As $\mu m\delta>0$, it follows that \eqref{eq-gkd} admits no solution $k$ with $k\delta\in \big(0,\frac{\pi}{4}\big)$ as $m$ is large,
in other words, the operator $H_{m,c}$ has no eigenvalues in $\big(0, \frac{\pi^2}{16 \delta^2}\big)$ for $m\to +\infty$.
In order to complete the proof of \eqref{eq-e5} it remains to check that $0$ is not an eigenvalue of $H_{m,c}$ for $m\to +\infty$.
If $0$ were an eigenvalue, then there would exist $a_\pm,b_\pm\in \CC^4$, not all zero, for which the function
\[
v(t)=\begin{cases} 
a_+ + b_+t, & t>0,\\
a_- - b_-t, & t<0,
\end{cases}
\]
would satisfy the boundary conditions \eqref{eq-p1}, \eqref{eq-p2}, \eqref{eq-p3}. The condition \eqref{eq-p1} gives
\begin{equation*}
  a_+=\widetilde{R}^+_\tau a_- \quad \text{and} \quad v(0^+)=\widetilde{R}^+_\tau a_-, \quad v(0^-)=a_-,
\end{equation*}
and \eqref{eq-p2} implies
\begin{equation*}
  b_\pm = \dfrac{c}{1-c\delta} a_\pm,
\end{equation*}
hence we deduce
\begin{equation*}
  v(t)=\dfrac{1}{1-c\delta}\begin{cases}
(1-c\delta+ct)\widetilde{R}^+_\tau a_-, & t>0,\\
(1-c\delta-ct)a_-, & t<0.
\end{cases}
\end{equation*}
This yields
\begin{equation*}
  v'(0^+)=\dfrac{c}{1-c\delta}\widetilde{R}^+_\tau a_- \quad \text{and} \quad v'(0^-)=-\dfrac{c}{1-c\delta}a_-
\end{equation*}
and the substitution into \eqref{eq-p3} together with the identity \eqref{eq-err1} imply
\[
\dfrac{c}{1-c\delta}\big((\widetilde{R}^+_\tau)^2+1\big)a_-=\dfrac{2m}{\tau}(\widetilde{R}^+_\tau-1)^2a_-, 
\text{ i.e. } \Big(\dfrac{c}{1-c\delta}+\dfrac{4m|\tau|}{\tau^2+4}\Big)a_-=0.
\]
As the number in the parentheses is non-zero for large $m$, the only solution is the trivial one $a_-=0$,
which then implies that $0$ is not an eigenvalue of $H_{m,c}$.
\end{proof}

For what follows we need a special representation for the matrices $\cB$ from~\eqref{def_M}:

\begin{lem}\label{lem-btheta}
For each $s\in\Sigma$ there holds $\cB(s)=\Theta_0(s) \beta \Theta_0(s)^*$
with the unitary matrices $\Theta_0(s) \in \CC^{4 \times 4}$ given by
\begin{equation}
   \label{eq-theta0}
\Theta_0(s)=\dfrac{1}{\sqrt{2}}\Big( I_4+\rmi\alpha\cdot\nu(s)\Big).
\end{equation}
\end{lem}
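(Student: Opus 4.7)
The claim is a direct matrix identity at each $s \in \Sigma$, so the plan is to verify it by a short algebraic computation that relies only on two ingredients: the anti-commutation relations~\eqref{eq_commutation} and $|\nu(s)|=1$.

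First, I would check that $\Theta_0(s)$ is unitary. Since the matrices $\alpha_j$ are self-adjoint, $(\alpha \cdot \nu)^* = \alpha \cdot \nu$, and from~\eqref{eq_commutation} together with $|\nu|=1$ one gets $(\alpha \cdot \nu)^2 = \sum_{j,k}\nu_j \nu_k \alpha_j\alpha_k = |\nu|^2 I_4 = I_4$. Hence
\[
\Theta_0 \Theta_0^* = \tfrac{1}{2}(I_4 + \rmi \alpha \cdot \nu)(I_4 - \rmi \alpha \cdot \nu) = \tfrac{1}{2}\bigl(I_4 + (\alpha \cdot \nu)^2\bigr) = I_4,
\]
and similarly $\Theta_0^* \Theta_0 = I_4$.

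Next, I would compute $\Theta_0 \beta \Theta_0^*$ directly by expanding the product:
\[
\Theta_0 \beta \Theta_0^* = \tfrac{1}{2}\bigl(\beta - \rmi\beta(\alpha\cdot\nu) + \rmi(\alpha\cdot\nu)\beta + (\alpha\cdot\nu)\beta(\alpha\cdot\nu)\bigr).
\]
The anti-commutation $\beta(\alpha \cdot \nu) + (\alpha \cdot \nu)\beta = 0$, which follows from~\eqref{eq_commutation} applied to $\alpha_0=\beta$ and $j=1,2,3$, lets me rewrite $(\alpha \cdot \nu)\beta = -\beta(\alpha \cdot \nu)$ and $(\alpha\cdot\nu)\beta(\alpha\cdot\nu) = -\beta(\alpha\cdot\nu)^2 = -\beta$. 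Substituting yields
\[
\Theta_0 \beta \Theta_0^* = \tfrac{1}{2}\bigl(\beta - 2\rmi\beta(\alpha \cdot \nu) - \beta\bigr) = -\rmi\beta(\alpha \cdot \nu) = \cB(s),
\]
which proves the lemma.

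There is no real obstacle here beyond carefully bookkeeping the two sign-flips produced by the $\beta/(\alpha\cdot\nu)$ anti-commutation and by $(\alpha\cdot\nu)^2 = I_4$; both are immediate consequences of the Clifford relations~\eqref{eq_commutation} and the normalization of $\nu$.
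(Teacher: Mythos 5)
Your proof is correct and follows essentially the same route as the paper: verify unitarity via $(\alpha\cdot\nu)^2 = I_4$, then simplify $\Theta_0\beta\Theta_0^*$ using the anticommutation of $\beta$ with $\alpha\cdot\nu$. The only cosmetic difference is that the paper first commutes $\beta$ past $(I_4 - \rmi\alpha\cdot\nu)$ and then squares $(I_4 + \rmi\alpha\cdot\nu)$, whereas you expand the full product term by term; both reduce to the same two sign-flips.
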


\begin{proof}
Using  $(\alpha\cdot \nu)^2=I_4$ one easily checks that  $\Theta_0(s)^*=\frac{1}{\sqrt{2}}\big( I_4-\rmi\alpha\cdot\nu(s)\big)$
and that $\Theta^*_0(s) \Theta_0(s)=I_4$, i.e. that $\Theta_0$ is unitary. 
Moreover, using the commutation relations \eqref{eq_commutation} we have 
\begin{multline*}
\Theta_0 \beta \Theta^*_0=\dfrac{1}{2}( 1+\rmi\alpha\cdot\nu)\beta ( 1-\rmi\alpha\cdot\nu)=\dfrac{1}{2}( 1+\rmi\alpha\cdot\nu)( 1+\rmi\alpha\cdot\nu) \beta\\
=\dfrac{1}{2}\big( I_4+ 2\rmi \alpha\cdot \nu - (\alpha\cdot \nu)^2\big)\beta =\rmi (\alpha\cdot \nu) \beta=-\rmi\beta \alpha\cdot \nu=\cB. \qedhere
\end{multline*}
\end{proof}

An explicit computation with the help of Lemma~\ref{lem-btheta} gives then the following result.
\begin{lem} \label{lem1dn1}
  For $s\in\Sigma$, $m>0$, and $c>0$ consider the following sesquilinear form $t^N_{s,m,c}$ in $L^2\big((-\delta,\delta),\CC^4\big)$:
	\begin{align*}
	t^N_{s,m,c}(u,u)&=\int_{-\delta}^\delta |u'|^2 \dd t  + \frac{2m}{\tau} \big|u(0^+)-u(0^-)\big|^2 -c \big|u(\delta)\big|^2-c\big|u(-\delta)\big|^2,\\
	\dom(t^N_{s,m,c})&=\Big\{ u\in H^1\big((-\delta,0)\cup(0,\delta),\CC^4\big): \cP^-_\tau(s) u(0^+)+\cP^+_\tau(s) u(0^-)=0\Big\},
	\end{align*}
	then the associated self-adjoint operator $T^N_{s,m,c}$ in $L^2\big((-\delta,\delta),\CC^4\big)$ is unitarily equivalent to 
	the operator $H_{m,c}$ from Lemma~\ref{lem1dn},
	\begin{equation}
	 T^N_{s,m,c}=\Theta(s) H_{m,c} \Theta(s)^*,
	\end{equation}
	where $\Theta(s)$ is the unitary map in $L^2\big((-\delta,\delta),\CC^4\big)$ defined by $\big(\Theta(s) u\big) (t):=\Theta_0(s) u(t)$
	with $\Theta_0$ given by \eqref{eq-theta0}, and $s\mapsto \Theta(s)$ is a $C^2$ map in the operator norm topology. Furthermore,
	one can represent, with a suitable smooth function $\psi_{m,c}:(0,\delta)\to\RR$ 	independent of $s$,
	\begin{multline}
	   \label{eq-vv3}
	\ker\big( T^N_{s,m,c} -E_1(T^N_{s,m,c})\big)
	=\Big\{v:
	v(t)=v_\pm \psi_{m,c}(|t|) \text{ as } \pm t>0\\ \text{ with }
	v_\pm\in\CC^4 \text{ such that } \cP^-_\tau(s) v_+ +\cP^+_\tau(s) v_-=0
	\Big\}.
	\end{multline}

\end{lem}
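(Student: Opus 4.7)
The plan is to exhibit $\Theta(s)$ as a fiberwise unitary that intertwines the two sesquilinear forms and then to transport every conclusion of Lemma~\ref{lem1dn} through it. Since $\Theta_0(s)\in\CC^{4\times 4}$ is unitary by Lemma~\ref{lem-btheta}, pointwise multiplication $(\Theta(s)u)(t):=\Theta_0(s)u(t)$ defines a unitary operator on $L^2((-\delta,\delta),\CC^4)$, and the smoothness of $s\mapsto\Theta(s)$ in the operator-norm topology follows immediately from the $C^3$ regularity of $\nu$ on the $C^4$ surface $\Sigma$ via the explicit formula \eqref{eq-theta0}.

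The key algebraic identity is the intertwining property
\[
\cP^\pm_\tau(s)\,\Theta_0(s)=\Theta_0(s)\,\widetilde{P}^\pm_\tau,
\]
which is a direct consequence of Lemma~\ref{lem-btheta}: indeed,
\[
\cP^\pm_\tau(s)\Theta_0(s)=\Big(\tfrac{\tau}{2}\pm\Theta_0(s)\beta\Theta_0(s)^*\Big)\Theta_0(s)=\Theta_0(s)\Big(\tfrac{\tau}{2}\pm\beta\Big).
\]
Applying this at $t=0^\pm$ to $v=\Theta(s)u$ gives
\[
\cP^-_\tau(s)v(0^+)+\cP^+_\tau(s)v(0^-)=\Theta_0(s)\big(\widetilde{P}^-_\tau u(0^+)+\widetilde{P}^+_\tau u(0^-)\big),
\]
and since $\Theta_0(s)$ is invertible, this shows that $\Theta(s)$ restricts to a bijection between $\dom(h_{m,c})$ and $\dom(t^N_{s,m,c})$.

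Next, I check that the form values are preserved. Since $\Theta_0(s)$ acts pointwise and is unitary on $\CC^4$, the elementary identities
\[
|v'(t)|=|u'(t)|,\qquad |v(\pm\delta)|=|u(\pm\delta)|,\qquad |v(0^+)-v(0^-)|=|u(0^+)-u(0^-)|
\]
hold for all $t$ and all $u\in\dom(h_{m,c})$, $v=\Theta(s)u$. Summing the contributions gives $t^N_{s,m,c}(\Theta(s)u,\Theta(s)u)=h_{m,c}(u,u)$, which together with the bijection of domains proves that the associated self-adjoint operators satisfy $T^N_{s,m,c}=\Theta(s)H_{m,c}\Theta(s)^*$.

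Finally, for the kernel description \eqref{eq-vv3} I simply transport \eqref{eq-vv2}: the unitary equivalence gives $E_1(T^N_{s,m,c})=E_1(H_{m,c})$, and any $v\in\ker\bigl(T^N_{s,m,c}-E_1(T^N_{s,m,c})\bigr)$ is of the form $v=\Theta(s)u$ with $u(t)=w_\pm\psi_{m,c}(|t|)$ for $\pm t>0$ and $\widetilde{P}^-_\tau w_++\widetilde{P}^+_\tau w_-=0$; setting $v_\pm:=\Theta_0(s)w_\pm$ yields $v(t)=v_\pm\psi_{m,c}(|t|)$, while the intertwining identity above translates the boundary condition on $(w_+,w_-)$ into $\cP^-_\tau(s)v_++\cP^+_\tau(s)v_-=0$, exactly as claimed. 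There is no genuine obstacle here since everything reduces to the pointwise unitarity of $\Theta_0(s)$ together with Lemma~\ref{lem-btheta}; the only point requiring a little care is the explicit verification that the transmission condition transforms correctly, which is handled by the displayed intertwining formula.
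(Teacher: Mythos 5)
Your proof is correct and fills in exactly the computation the paper leaves implicit (the paper precedes the lemma only with ``An explicit computation with the help of Lemma~\ref{lem-btheta} gives then the following result.''). The intertwining identity $\cP^\pm_\tau(s)\Theta_0(s)=\Theta_0(s)\widetilde{P}^\pm_\tau$, which follows directly from $\cB(s)=\Theta_0(s)\beta\Theta_0(s)^*$, is indeed the key observation, and your verification that $\Theta(s)$ is a unitary bijection between the form domains preserving the form values, followed by the transport of the ground-state eigenspace, is the intended argument.
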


Now let us recall some standard constructions, for which it is useful to use the identification
\[
L^2\big(\Sigma\times(-\delta,\delta),\CC^4)\simeq L^2(\Sigma, \cG), \quad
\cG:=L^2\big((-\delta,\delta),\CC^4).
\]
Recall that for any Banach space $B$ the gradient $\nabla_s:C^1(\Sigma, B)\to C^0(T\Sigma,B)$
acts in local coordinates of $\Sigma$ as
\[
(\nabla_s f)_j=\sum_{k} g^{jk} \partial_k f.
\]
In particular, for the $C^2$ maps $\Theta:\Sigma\to \bfB(\cG)$ and $\Theta^*:\Sigma\to \bfB(\cG)$ 
from Lemma~\ref{lem1dn1} one can find
a constant $C>0$ such that for every $u\in C^0(\Sigma,\cG)$ at every point $s\in\Sigma$ there holds
\begin{equation}
    \label{eq-noth}
\big\|(\nabla_s \Theta) u\big\|_{T_s\Sigma\otimes\cG}\le C \|u\|_{\cG}, \quad
\big\|(\nabla_s \Theta^*) u\big\|_{T_s\Sigma\otimes\cG}\le C \|u\|_{\cG},
\end{equation}
and $C$ is independent of $m$ and $\delta$.
Furthermore, let $\pi(s)$ be the orthogonal projector in $L^2\big((-\delta,\delta),\CC^4\big)$ on the subspace
$\ker\big(T^N_{s,m,c}-E_1(T^N_{s,m,c})\big)$.
Denote by $\Pi$ the orthogonal projector in $L^2\big(\Sigma\times(-\delta,\delta),\CC^4)$
given by
\[
(\Pi u)(s,t)=\pi(s) u(s,\cdot) (t)
\]
and set $\Pi^\perp:=1-\Pi$.
Due to the fibered structure, both $\Pi$ and $\Pi^\perp$ also define in the canonical way bounded operators in $L^2(T\Sigma)\otimes L^2\big((-\delta,\delta),\CC^4)$,
to be denoted by the same symbols.

\begin{lem}\label{lem29}
The map
$[\nabla_s,\Pi] u:=\nabla_s (\Pi u) -\Pi(\nabla_s u)$ defined for $u\in C^1(\Sigma) \otimes L^2\big((-\delta,\delta),\CC^4)$
extends by density to a bounded operator
\[
[\nabla_s,\Pi]: L^2(\Sigma) \otimes L^2\big((-\delta,\delta),\CC^4)\to L^2(T\Sigma) \otimes L^2\big((-\delta,\delta),\CC^4),
\]
whose norm remains uniformly bounded for $\delta\to 0^+$ and $m\delta\to+\infty$. Moreover we have $[\nabla_s,\Pi]\Big(H^1(\Sigma) \otimes L^2\big((-\delta,\delta)\Big) \subset H^1(T\Sigma) \otimes L^2\big((-\delta,\delta),\CC^4)$. The same conclusion holds
for $[\nabla_s,\Pi^\perp]\equiv -[\nabla_s,\Pi]$.
\end{lem}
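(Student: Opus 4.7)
The plan is to exploit the unitary equivalence $T^N_{s,m,c}=\Theta(s) H_{m,c}\Theta(s)^*$ from Lemma~\ref{lem1dn1}. Since $H_{m,c}$ does not depend on $s$, its lowest eigenvalue and the associated spectral projector $\pi_0$ onto $\ker(H_{m,c}-E_1(H_{m,c}))$ are $s$-independent. It follows that the fiberwise projector onto $\ker(T^N_{s,m,c}-E_1(T^N_{s,m,c}))$ factorizes as $\pi(s)=\Theta(s)\pi_0\Theta(s)^*$, and therefore, after extending $\pi_0$ to the $s$-independent fiberwise projector $\Pi_0$ on $L^2(\Sigma)\otimes\cG$, one has the global identity
\[
\Pi=\Theta \Pi_0 \Theta^*,
\]
where $\Theta$ denotes multiplication by $\Theta(s)$ in the $t$-variable.

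Next, I would compute the commutator explicitly. Since $\Pi_0$ acts only on the $t$-variable and its matrix kernel does not depend on $s$, one has $[\nabla_s,\Pi_0]=0$ on $C^1(\Sigma)\otimes\cG$. Using the Leibniz rule for $\nabla_s$ applied to products of bounded-operator-valued $C^2$ maps with sections, I obtain
\[
[\nabla_s,\Pi]u=(\nabla_s\Theta)\,\Pi_0\,\Theta^*u+\Theta\,\Pi_0\,(\nabla_s\Theta^*)u.
\]
The key point is that the uniform pointwise bound \eqref{eq-noth} yields, at every $s\in\Sigma$,
\[
\bigl\|(\nabla_s\Theta)\Pi_0\Theta^*u\bigr\|_{T_s\Sigma\otimes\cG}\le C\|\Pi_0\Theta^*u\|_\cG\le C\|u\|_\cG,
\]
and analogously for the second term, with a constant $C$ independent of $m$ and $\delta$ (this is the crucial uniformity, since $\Theta$ is defined purely through $\nu$ via \eqref{eq-theta0}, with no dependence on the one-dimensional spectral data). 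Integration over $\Sigma$ then gives the claimed bounded extension with operator norm uniform in the regime $\delta\to 0^+$, $m\delta\to+\infty$.

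For the $H^1$-preservation statement, apply $\nabla_s$ once more to the formula displayed above. The resulting expression involves $\nabla_s^2\Theta$ and $\nabla_s^2\Theta^*$ (bounded because $\Theta\in C^2(\Sigma,\bfB(\cG))$), first-order derivatives of $\Theta,\Theta^*$ (bounded), and one factor of $\nabla_s u$, which belongs to $L^2$ whenever $u\in H^1(\Sigma)\otimes\cG$. All terms are therefore in $L^2(T\Sigma)\otimes\cG$, hence $[\nabla_s,\Pi]u\in H^1(T\Sigma)\otimes\cG$. The conclusion for $[\nabla_s,\Pi^\perp]$ is immediate from $\Pi^\perp=\mathrm{Id}-\Pi$ and the fact that $\mathrm{Id}$ commutes with $\nabla_s$.

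The only nontrivial step is really the factorization $\pi(s)=\Theta(s)\pi_0\Theta(s)^*$: once this is recognized, the $s$-dependence of $\Pi$ is fully encoded in the unitary $\Theta(s)$, and since $\Theta$ depends only on the geometry of $\Sigma$ (through $\nu$) and not on the parameters $m,\delta,c$, all estimates inherit the required uniformity from \eqref{eq-noth} with no further work.
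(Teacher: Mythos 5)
Your proposal is correct and follows essentially the same route as the paper: both use the factorization $\pi(s)=\Theta(s)\pi_0\Theta(s)^*$ with $s$-independent $\pi_0$, compute $[\nabla_s,\Pi]u=(\nabla_s\Theta)\pi_0\Theta^*u+\Theta\pi_0(\nabla_s\Theta^*)u$, and conclude via the uniform pointwise bound on $\nabla_s\Theta,\nabla_s\Theta^*$ together with the $C^2$-regularity of $\Theta$ for the $H^1$-preservation statement.
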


\begin{proof}
By Lemma \ref{lem-btheta} one can represent $\pi(s)=\Theta(s)\pi_0 \Theta(s)^*$, where 
$\pi_0$ is the orthogonal projector in $L^2\big((-\delta,\delta),\CC^4)$
on $\ker\big(H_{m,c}- E_1(H_{m,c})\big)$ with the operator $H_{m,c}$ from Lemma~\ref{lem1dn}.
As $\pi_0$ does not depend on $s$, a direct computation in the local coordinates shows 
that at each point $s\in\Sigma$ one has
\begin{equation}\label{eqn:expcommut}
[\nabla_s,\Pi] u= (\nabla_s \Theta) \pi_0 \Theta^*u + \Theta \pi_0 (\nabla_s \Theta^*)u.
\end{equation}
Using \eqref{eq-noth} we estimate
\begin{align*}
\big\| (\nabla_s \Theta) \pi_0 \Theta^*u\big\|_{T_s \Sigma \otimes\cG}&\le 
C \|\pi_0 \Theta^*u\|_{\cG} \le C\|\pi_0\|_{\bfB(G)}\| \Theta^*\|_{\bfB(\cG)} \|u\|_{\cG}\le C \|u\|_{\cG},\\
\big\| \Theta \pi_0 (\nabla_s \Theta^*)u\big\|_{T_s \Sigma \otimes\cG}&
\le 
\big\| \Theta\|_{\bfB(T_s \Sigma \otimes\cG)}
\big\|\pi_0\|_{\bfB(T_s \Sigma \otimes\cG)} \|(\nabla_s \Theta^*) u\|_{T_s \Sigma \otimes\cG} \le C \|u\|_{\cG},
\end{align*}
then
\begin{multline*}
\big\|[\nabla_s,\Pi] u\big\|^2_{L^2(T\Sigma)\otimes\cG}
=\iint_\Sigma \big\| (\nabla_s \Theta) \pi_0 \Theta^*u + \Theta \pi_0 (\nabla_s \Theta^*) u\big\|^2_{T_s \Sigma \otimes\cG}\dd\Sigma(s)\\
\le
2\iint_\Sigma \big\| (\nabla_s \Theta) \pi_0 \Theta^*u\big\|_{T_s \Sigma \otimes\cG}^2\dd\Sigma(s)
+
2\iint_\Sigma \big\| \Theta \pi_0 (\nabla_s \Theta^*)u\big\|_{T_s \Sigma \otimes\cG}^2\dd\Sigma(s)\\
\le 4C^2 \iint_\Sigma \| u\|^2_\cG \dd\Sigma(s)=4C^2\|u\|^2_{L^2(\Sigma)\otimes \cG}.
\end{multline*}
As the constant $C$ is independent of $m$ and $\delta$, the continuity result follows. To prove the mapping properties of $[\nabla_s,\Pi]$ between the Sobolev spaces of order $1$,
it is enough to remark that \eqref{eqn:expcommut} is differentiable with respect to $s$ because $\Theta$ is $C^2$.
\end{proof}

\begin{lem}\label{lem-lowb}
Let the form $q^N_{m,\tau,\delta}$ be as in Lemma~\ref{lem-qnd} and let $\mu$ be given by~\eqref{eq-aaa}. Then
there are constants $b>0$ and $m_0>0$ such that for all $m>m_0$ and $j\in\big\{1,\dots,\cN(q^N_{m,\tau,\delta},0)\big\}$ it holds
\[
E_j(q^N_{m,\tau,\delta})\ge -\mu^2m^2+E_j\big((1-b\delta)\cL^\tau_0 +K-M^2\big)- b m^2e^{-2\mu m\delta}-b\delta.
\]
\end{lem}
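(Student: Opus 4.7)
My plan is to decompose functions in $\dom(q^N_{m,\tau,\delta})$ using a fiber-wise spectral projector adapted to the transverse one-dimensional operator $T^N_{s,m,c}$ and then invoke the min-max principle to compare with the effective operator on $\cH_\tau$. Concretely, let $\Pi$ be the fiber projector $s\mapsto \pi(s)$, where $\pi(s)$ is the orthogonal projection in $\cG$ onto $\ker(T^N_{s,m,c}-E_1(T^N_{s,m,c}))$; write $u=u_0+u_1$ with $u_0=\Pi u$ and $u_1=\Pi^\perp u$. By the explicit description of the kernel in \eqref{eq-vv3}, every $u_0(s,\cdot)$ takes the form $v_\pm(s)\psi_{m,c}(|t|)$ for $\pm t>0$ with $v=(v_+,v_-)\in\cH_\tau$; after rescaling $\psi_{m,c}$ so that its $L^2(0,\delta)$-norm equals one, the correspondence $u_0\leftrightarrow v$ becomes an isometry satisfying $\|u_0\|_{L^2}^2=\|v\|_{\cH}^2$, $\int_\Sigma\|\nabla_s u_0\|_{\cG}^2\,d\Sigma=\ell_0^\tau(v,v)$ and $\iint(K-M^2)|u_0|^2=\langle v,(K-M^2)v\rangle_{\cH}$.

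Next I would bound the three contributions to $q^N_{m,\tau,\delta}(u,u)$. The transverse part, using Lemma~\ref{lem1dn1} and the pointwise spectral decomposition of $T^N_{s,m,c}$, gives $\int_\Sigma t^N_{s,m,c}(u,u)\,d\Sigma \ge (-\mu^2 m^2-Cm^2e^{-2\mu m\delta})\|v\|_{\cH}^2+(b^2/\delta^2)\|u_1\|^2$. The potential term, by the fiberwise $L^2$-orthogonality of $u_0$ and $u_1$, equals $\langle v,(K-M^2-c\delta)v\rangle_{\cH}+\int(K-M^2-c\delta)|u_1|^2 \ge \langle v,(K-M^2)v\rangle_{\cH}-c\delta\|v\|_{\cH}^2-C\|u_1\|^2$. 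For the tangential part I would use Lemma~\ref{lem29}: writing $\Pi\nabla_s u=\nabla_s u_0-[\nabla_s,\Pi]u$ and exploiting the identity $[\nabla_s,\Pi]u_0=\Pi^\perp\nabla_s u_0$ (so the commutator action on $u_0$ lands in $\ran\Pi^\perp$ and can be combined with $\Pi^\perp\nabla_s u$ rather than Cauchy--Schwarzed against $\nabla_s u_0$), a careful application of Cauchy--Schwarz with parameter $\varepsilon=\delta$ yields $(1-c\delta)\int\|\nabla_s u\|^2\,d\Sigma \ge (1-C\delta)\ell_0^\tau(v,v)-C\delta\|v\|_{\cH}^2-(C/\delta)\|u_1\|^2$.

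Adding the three estimates, the coefficient of $\|u_1\|^2$ is at least $b^2/\delta^2-C/\delta-2C \ge b^2/(2\delta^2)$ for $\delta$ small, so this nonnegative term may be discarded. The residual bound reads
\[
q^N_{m,\tau,\delta}(u,u) \ge \bigl\langle v,\bigl((1-b\delta)\cL_0^\tau+K-M^2\bigr)v\bigr\rangle_{\cH} - \bigl(\mu^2 m^2+bm^2e^{-2\mu m\delta}+b\delta\bigr)\|v\|_{\cH}^2
\]
for a suitable $b>0$. To close, I fix $W\subset\cH_\tau$ to be the $(j-1)$-dimensional span of the first $j-1$ eigenvectors of $(1-b\delta)\cL_0^\tau+K-M^2$ and let $\widetilde W:=\{u_{0,w}:w\in W\}\subset L^2(\Sigma\times(-\delta,\delta),\CC^4)$ under the inverse isometry. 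The identity $\langle u,u_{0,w}\rangle_{L^2}=\langle v,w\rangle_{\cH}$ shows that $u\perp\widetilde W$ is equivalent to $v\perp W$, whence the min-max principle for the effective operator yields $\langle v,((1-b\delta)\cL_0^\tau+K-M^2)v\rangle_{\cH} \ge E_j((1-b\delta)\cL_0^\tau+K-M^2)\|v\|_{\cH}^2$ for such $u$. Since the bracketed expression in the display is negative for large $m$ and $\|u\|^2\ge\|v\|_{\cH}^2$, dividing by $\|u\|^2$ only improves the estimate, and the min-max for $q^N_{m,\tau,\delta}$ finishes the argument.

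The main technical obstacle is the tangential estimate: a plain Cauchy--Schwarz using the uniform $O(1)$ bound on $[\nabla_s,\Pi]$ from Lemma~\ref{lem29} produces an error of order $O(1)\|v\|_{\cH}^2$, which would shift the effective eigenvalue by a fixed positive constant and so violate the claimed $O(\delta)$-size error. Exploiting the structural orthogonality $[\nabla_s,\Pi]u_0\in\ran\Pi^\perp$ together with $\langle\Pi\nabla_s u,\Pi^\perp\nabla_s u\rangle=0$ and rearranging the cross terms carefully is essential to confine the loss to order $\delta$ in the $\|v\|_{\cH}^2$-coefficient, with the $1/\delta$ blow-up appearing only in the $\|u_1\|^2$-coefficient where it is absorbed by the large transverse spectral gap $b^2/\delta^2$.
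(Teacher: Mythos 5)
Your proposal follows the same architecture as the paper's proof: fiberwise spectral projector $\Pi$ onto the transverse ground state, the isometry between $\ran\Pi$ and $\cH_\tau$ via the rescaled ground-state profile, the transverse estimate from Lemmas~\ref{lem1dn}--\ref{lem1dn1}, absorption of $\Pi^\perp u$ through the spectral gap $b_1^2/\delta^2$, and the min-max comparison with the effective form at the end. All of this is sound and matches the paper's Lemma~\ref{lem-lowb}.

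There is, however, a genuine gap in the tangential estimate, and it is the single technically delicate step of the lemma. Expanding $\|\nabla_s u\|^2 = \|\nabla_s u_0\|^2 + \|\nabla_s u_1\|^2 + 2\Re\langle\nabla_s u_0,\nabla_s u_1\rangle$ and splitting the cross term via $\Pi$, $\Pi^\perp$ and the commutator, one is left with a term of the form
\[
\big\langle [\nabla_s,\Pi]\,u_0,\ \Pi^\perp\nabla_s u_1\big\rangle
\]
(the paper's $J_3$). You can discard the harmless pieces exactly as you indicate — the identity $[\nabla_s,\Pi]u_0=\Pi^\perp\nabla_s u_0$ and the orthogonality of $\ran\Pi$ and $\ran\Pi^\perp$ kill the easy cross terms and avoid any $O(1)\|v\|_\cH^2$ loss from those. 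But for this remaining term, any Cauchy--Schwarz, however carefully parametrized, produces either a $\tfrac{C}{\varepsilon}\|u_0\|^2$ term (requiring $\varepsilon\gtrsim 1$ to be $O(\delta)$-small) or an $\varepsilon\|\nabla_s u_1\|^2$ term that cannot be absorbed — there is no $+\tfrac{1}{\delta}\|\nabla_s u_1\|^2$ available in the form, only $+(1-c\delta)\|\nabla_s u_1\|^2$. The paper resolves this by integrating by parts: since $\Pi^\perp[\nabla_s,\Pi]\Pi u\in H^1$ (precisely the second, $H^1$-mapping, assertion of Lemma~\ref{lem29}), one may write $J_3=-\langle\ddiv_s(\Pi^\perp[\nabla_s,\Pi]\Pi u),\Pi^\perp u\rangle$ and then estimate $\|\ddiv_s(\cdots)\|\le C(\|u_0\|+\|\nabla_s u_0\|)$, landing exactly in the desired form $C\delta(\|u_0\|^2+\|\nabla_s u_0\|^2)+\tfrac{C}{\delta}\|u_1\|^2$. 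Your closing paragraph diagnoses this obstacle accurately, but the proposed cure --- ``rearranging the cross terms carefully'' plus Cauchy--Schwarz --- does not close it; you need to invoke the integration by parts and the $H^1$-regularity of the commutator explicitly.
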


\begin{proof}
Let $c>0$ be as in the expression for $q^N_{m,\tau,\delta}$.
Then by Lemmas~\ref{lem1dn} and \ref{lem1dn1} one may estimate, with some $b_0, b_1, m_0>0$ independent of $s$,
\begin{equation}
   \label{eq-lambdas}
E_1(T^N_{s,m,c})\ge -\mu^2m^2 - b_0 m^2e^{-2\mu m\delta}, \quad
E_5(T^N_{s,m,c})\ge \dfrac{b_1^2}{\delta^2}
\quad \text{ for } m>m_0.
\end{equation}

Let $u \in \dom(q^N_{m,\tau,\delta})$ be fixed.
Due to the definition of $\Pi$ and with the help of the min-max principle one obtains
\begin{multline*}
\iint_\Sigma \bigg(\int_{-\delta}^\delta |\partial_t u|^2 \dd t  + \frac{2m}{\tau} \big|u(\cdot,0^+)-u(\cdot,0^-)\big|^2 -c \big|u(\cdot,\delta)\big|^2-c\big|u(\cdot,-\delta)\big|^2\bigg) \dd\Sigma\\
\ge E_1(T^N_{s,m,c}) \|\Pi u\|^2_{L^2(\Sigma\times(-\delta,\delta),\CC^4)}
+ E_5(T^N_{s,m,c}) \|\Pi^\perp u\|^2_{L^2(\Sigma\times(-\delta,\delta),\CC^4)},
\end{multline*}
and using the pointwise orthogonality $\big\langle \Pi u(s,\cdot), \Pi^\perp u(s,\cdot)\big\rangle_{L^2((-\delta,\delta),\CC^4)}=0$, $s\in \Sigma$,
one gets
\begin{align*}
\iint_{\Sigma\times(-\delta,\delta)}  (K-M^2-c\delta)\, |u|^2\dd\Sigma\dd t
&= \iint_{\Sigma\times(-\delta,\delta)}  (K-M^2-c\delta)\, |\Pi u|^2\dd\Sigma\dd t\\
&\quad +\iint_{\Sigma\times(-\delta,\delta)}  (K-M^2-c\delta)\, |\Pi^\perp u|^2\dd\Sigma\dd t,
\end{align*}
implying
\begin{multline}\label{eq231}
q^N_{m,\tau,\delta}(u,u) 
\ge (1-c\delta) \iint_{\Sigma\times(-\delta,\delta)} \|\nabla_s u\|^2_{T_s\Sigma \otimes \CC^4}\dd\Sigma\dd t\\
+ \big\langle \Pi u, (K-M^2) \Pi u\big\rangle_{L^2(\Sigma\times(-\delta,\delta),\CC^4)} +(-\mu^2m^2 - b_0 m^2e^{-2\mu m\delta}-c\delta) \|\Pi u\|^2_{L^2(\Sigma\times(-\delta,\delta),\CC^4)}\\
+ \big\langle \Pi^\perp u, (K-M^2) \Pi^\perp u\big\rangle_{L^2(\Sigma\times(-\delta,\delta),\CC^4)} + \Big(\dfrac{b_1^2}{\delta^2}-c\delta\Big) \|\Pi^\perp u\|^2_{L^2(\Sigma\times(-\delta,\delta),\CC^4)}.
\end{multline}
Now we would like to separate the terms with $\Pi u$ and $\Pi^\perp u$ in the first term on the right-hand side.
One has, with the norms and scalar products taken in $L^2\Big(T\Sigma, L^2\big((-\delta,\delta),\CC^4\big)\Big)$,
\begin{equation}
   \label{eq-nu1}
\| \nabla_s u\|^2=\|\nabla_s(\Pi u)\|^2+\|\nabla_s(\Pi^\perp u)\|^2+
2\Re \langle \nabla_s (\Pi u), \nabla_s(\Pi^\perp u)\rangle,
\end{equation}
and
\begin{align*}
\Big\langle \nabla_s (\Pi u), \nabla_s(\Pi^\perp u)\Big\rangle &= \Big\langle \nabla_s \Pi \Pi u, \nabla_s \Pi^\perp \Pi^\perp u \Big\rangle\\
&=\Big\langle \big([\nabla_s,\Pi] + \Pi\nabla_s\big) \Pi u,  \big([\nabla_s,\Pi^\perp] + \Pi^\perp\nabla_s\big) \Pi^\perp u\Big\rangle\\
&=\Big\langle [\nabla_s,\Pi] \Pi u,  [\nabla_s,\Pi^\perp] \Pi^\perp u\Big\rangle
+\Big\langle \Pi\nabla_s \Pi u,  [\nabla_s,\Pi^\perp] \Pi^\perp u\Big\rangle\\
&\quad +\Big\langle [\nabla_s,\Pi] \Pi u,  \Pi^\perp\nabla_s\Pi^\perp u\Big\rangle
+\Big\langle \Pi\nabla_s \Pi u,  \Pi^\perp\nabla_s \Pi^\perp u\Big\rangle\\
&=:J_1+J_2+J_3+J_4.
\end{align*}
Due to the definition of $\Pi$ and $\Pi^\perp$ one has $J_4=0$.
By Lemma~\ref{lem29} we estimate, with some $c_0,c_1>0$ independent of $m$ and $\delta$:
\begin{align*}
|J_1|&\le c_0 \|\Pi u\|\cdot \|\Pi^\perp u\|\le c_0 \delta  \|\Pi u\|^2+ \dfrac{c_0}{\delta}\|\Pi^\perp u\|^2,\\
|J_2|&\le c_1\|\nabla_s \Pi u\|\cdot\|\Pi^\perp u\|
\le c_1\delta \|\nabla_s \Pi u\|^2+ \dfrac{c_1}{\delta}\|\Pi^\perp u\|^2.
\end{align*}
Finally, using the self-adjointness of $\Pi^\perp$ and that by Lemma \ref{lem29} we have $\Pi^\perp [\nabla_s,\Pi] \Pi u \in H^1\Big(T\Sigma, L^2\big((-\delta,\delta),\CC^4\big)\Big)$, we can perform an integration by parts to obtain
\begin{align*}
J_3&=\Big\langle \Pi^\perp [\nabla_s,\Pi] \Pi u, \nabla_s\Pi^\perp u\Big\rangle\\
&=\iint_\Sigma \big\langle \Pi^\perp [\nabla_s,\Pi] \Pi u, \nabla_s \Pi^\perp u\big\rangle_{T_s\Sigma \otimes L^2((-\delta,\delta),\CC^4)} \dd\Sigma(s)\\
&=-\iint_\Sigma \big\langle \ddiv_s \big(\Pi^\perp [\nabla_s,\Pi] \Pi u\big), \Pi^\perp u\big\rangle_{L^2((-\delta,\delta),\CC^4)} \dd\Sigma(s)\\
&=-\Big\langle \ddiv_s \big(\Pi^\perp [\nabla_s,\Pi] \Pi u\big), \Pi^\perp u\Big\rangle,
\end{align*}
which yields
\begin{equation*}
  |J_3| \le \Big\|\ddiv_s \big(\Pi^\perp [\nabla_s,\Pi] \Pi u\big)\Big\|_{L^2(\Sigma\times(-\delta,\delta),\CC^4)}
\cdot\big\|\Pi^\perp u\big\|_{L^2(\Sigma\times(-\delta,\delta),\CC^4)}.
\end{equation*}
Recall that in the local coordinates on $\Sigma$ for a vector field $A=(A_j)$
one has
\[
\ddiv_s A=\sum_{j} \big( \partial_j A_j +\sum_k \Gamma^j_{kj} A_k\big),
\]
with $\Gamma^j_{kj}$ being the Cristoffel symbols depending on the choice of coordinates only.
In our case the $j$-th component of the vector $\Pi^\perp [\nabla_s,\Pi] \Pi u$ can be computed using \eqref{eqn:expcommut} and is
\[
\big(\Pi^\perp [\nabla_s,\Pi] \Pi u\big)_j=\Theta \pi_0^\perp \Theta^* \sum_{k} g^{jk} \big(\partial_k\Theta \cdot \pi_0 \Theta^*+
\Theta \pi_0 \partial_k\Theta^*\big) (\Pi u).
\]
Furthermore, the projector $\pi_0$ does not depend on $s$ while $\Theta$ is $C^2$ in $s$ (see Lemma~\ref{lem-btheta})
and does not depend on $m$ and $\delta$. Therefore, with suitable $c_2>0$ one may estimate
\begin{multline*}
\big\|\ddiv_s \big(\Pi^\perp [\nabla_s,\Pi] \Pi u\big)\big\|_{L^2(\Sigma\times(-\delta,\delta),\CC^4)}\\
\le c_2 \Big(\|\Pi u\|_{L^2(\Sigma\times(-\delta,\delta),\CC^4)} +\|\nabla_s(\Pi u)\|_{L^2(T\Sigma, L^2((-\delta,\delta),\CC^4))}\Big),
\end{multline*}
which gives
$$
|J_3|\le 2 c_2 \delta \Big(\|\Pi u\|^2_{L^2(\Sigma\times(-\delta,\delta),\CC^4)} +\|\nabla_s(\Pi u)\|^2_{L^2(T\Sigma, L^2((-\delta,\delta),\CC^4))}\Big)+ \dfrac{c_2}{\delta}\big\|\Pi^\perp u\big\|^2_{L^2(\Sigma\times(-\delta,\delta),\CC^4)}.
$$
Therefore, from \eqref{eq-nu1} we obtain, with a suitable $c_3>0$,
\begin{align*}
\|\nabla_s u\|_{L^2(T\Sigma, L^2((-\delta,\delta),\CC^4))}^2 &\ge \|\nabla_s (\Pi u)\|^2_{L^2(T\Sigma, L^2((-\delta,\delta),\CC^4))}\\
& \quad-2 \Big|\big\langle \nabla_s (\Pi u),\nabla_s(\Pi^\perp u)\big\rangle_{L^2(T\Sigma, L^2((-\delta,\delta),\CC^4))}\Big|\\
&\ge (1-c_3 \delta)\|\nabla_s (\Pi u)\|^2_{L^2(T\Sigma, L^2((-\delta,\delta),\CC^4))}\\
&\quad -c_3 \delta\|\Pi u\|^2_{L^2(\Sigma, L^2((-\delta,\delta),\CC^4))}
-\dfrac{c_3}{\delta}\|\Pi^\perp u\|^2_{L^2(\Sigma, L^2((-\delta,\delta),\CC^4))}.
\end{align*}
Let us substitute all the estimates obtained into \eqref{eq231}.
One remarks that all terms $\Pi^\perp u$ can be minorated by
\[
\Big(\dfrac{b_1^2}{\delta^2}-c\delta-\dfrac{(1-c\delta)c_3}{\delta}\Big)
\big\| \Pi^\perp u\big\|^2_{L^2(\Sigma\times(-\delta,\delta),\CC^4)}
+\big\langle \Pi^\perp u, (K-M^2) \Pi^\perp u\big\rangle_{L^2(\Sigma\times(-\delta,\delta),\CC^4)}.
\]
Therefore, one can increase the value of $m_0$ such that for $m>m_0$ the term becomes non-negative
(as $\delta$ becomes small).
Therefore, for large $m>m_0$ we may simply estimate
\begin{equation}
    \label{eq-u0}
q^N_{m,\tau,\delta}(u,u)\ge  q_0(\Pi u,\Pi u),
\end{equation}
where $q_0$ is the sesquilinear form in the Hilbert space $\ran \Pi$ defined on $\Pi\big(\dom(q^N_{m,c})\big)$ by
\begin{multline*}
q_0(u,u)=
(1-b\delta) \iint_{\Sigma\times(-\delta,\delta)} \|\nabla_s u\|^2_{T_s\Sigma \otimes \CC^4}\dd\Sigma\dd t\\
+ \big\langle u, (K-M^2) u\big\rangle_{L^2(\Sigma\times(-\delta,\delta),\CC^4)}-(\mu^2m^2 + b m^2e^{-2\mu m\delta}+b\delta) \|u\|^2_{L^2(\Sigma\times(-\delta,\delta),\CC^4)}
\end{multline*}
and $b>0$ is a suitable constant.

Now define a sesquilinear form $q$ on $\ran (\Pi) \times \ran(\Pi^\perp)$ by $q\big( (u,u^\perp),(u,u^\perp)\big)=q_0(u,u)$.
Then the inequality \eqref{eq-u0} takes the form $q_{m, \tau, \delta}^N(u,u)\ge q_0(\Pi u,\Pi u) = q(Uu,Uu)$, where
$U u=(\Pi u,\Pi^\perp u)$. As $U$ is unitary, one has by the min-max principle
$E_j(q^N_{m,\tau,c})\ge E_j(q)$ for all $j$. On the other hand, due to the representation
$q=q_0\oplus 0$ we have $E_j(q)=E_j(q_0)$ for all $j\in\NN$ with $E_j(q_0)<0$.
Therefore, $E_j(q^N_{m,\tau,\delta})\ge E_j(q_0)$ for all $j\in \big\{1,\dots, \cN(q_0,0)\big\}$.
But again due to the form inequality one has $\cN(q^N_{m,\tau,\delta},0)\le \cN(q_0,0)$, therefore,
\[
E_j(q^N_{m,\tau,\delta})\ge E_j(q_0) \text{ for all } j\in\big\{1,\dots,\cN(q^N_{m,\tau,\delta},0)\big\}.
\]

It remains to find a suitable expression for $E_j(q_0)$.
Let $\cH$ be defined by~\eqref{def_Hilbert_space_H}.
Using the representation \eqref{eq-vv3} and choosing a constant $c_m>0$ such that $c_m^2\|\psi_{m,c}\|^2_{L^2(0,\delta)}=1$
one concludes that the map
\[
V: \cH \to \ran (\Pi), \quad (V v)(s,t)=c_m v_\pm(s) \psi\big(|t|\big) \text{ for } \pm t>0,
\]
is unitary, and with the form $\ell^\tau_0$ from \eqref{eq-ll1} we have
\[
q_0(Vv,Vv)=(1-b\delta) \ell^\tau_0(v,v) + \big\langle v, (K-M^2) v\big\rangle_{\cH} +(-\mu^2m^2 - b m^2e^{-2\mu m\delta}-b\delta) \|v\|^2_\cH,
\]
which shows
\[
E_j(q_0)=-\mu^2m^2+E_j\big((1-b\delta)\cL^\tau_0 +K-M^2\big)- b m^2e^{-2\mu m\delta}-b\delta
\]
for all $j\in\NN$ and concludes the proof of this lemma.
\end{proof}

\begin{proof}[Proof of the lower bound in Proposition~\ref{prop-main1}]
It is sufficient to use the estimate of Lemma~\ref{lem-lowb}
in the left-hand inequality of Lemma~\ref{lem-qnd}.
\end{proof}

\section*{Acknowledgments}
Markus Holzmann was supported by the Austrian Agency for International Cooperation in Education and Research (OeAD).
Thomas Ourmi\`eres-Bonafos was supported by a public grant as part of the ``Investissement d'avenir'' project, reference ANR-11-LABX-0056-LMH, LabEx LMH.
Thomas Ourmi\`eres-Bonafos and Konstantin Pankrashkin are supported by the PHC Amadeus 2017--2018 37853TB
funded by the French Ministry of Foreign Affairs and the French Ministry of Higher Education, Research and Innovation.
The authors thank Yuri Kordyukov for comments on a preliminary version of the text.

\end{document}